\newtheorem{theorem}{\quad Theorem}[section]
\theoremstyle{plain}
\newtheorem{corollary}[theorem]{Corollary}
\newtheorem{definition}[theorem]{Definition}
\newtheorem{lemma}[theorem]{Lemma}
\newtheorem{proposition}[theorem]{Proposition}
\numberwithin{equation}{section}
\begin{document}
\title[The Maps Category]{Tilting theory and functor categories III.\\
The Maps Category.	}
\author{R. Mart\'{\i}nez-Villa}
\address{Instituto de Matem\'{a}ticas UNAM, Unidad Morelia, Mexico}
\email{mvilla@matmor.unam.mx}
\thanks{}
\author{M. Ortiz-Morales}
\address{Instituto de Matem\'{a}ticas UNAM, Unidad Morelia, Mexico}
\email{mortiz@matmor.unam.mx}
\urladdr{http://www.matmor.unam.mx}
\thanks{The second author thanks CONACYT for giving him financial support during his graduate studies }
\date{January 21, 2010}
\subjclass{2000]{Primary 05C38, 15A15; Secondary 05A15, 15A18}}
\keywords{ Classical Tilting, Functor Categories}
\dedicatory{}
\thanks{This paper is in final form and no version of it will be submitted
for publication elsewhere.}

\begin{abstract}
In this paper we continue the project of generalizing tilting theory to the
category of contravariant functors $\mathrm{Mod}(\mathcal{C})$, from a
skeletally small preadditive category $\mathcal{C}$ to the category of
abelian groups, initiated in [17]. In [18] we introduced the notion of a
a generalized tilting category $\mathcal{T}$, and extended Happel's theorem
to $\mathrm{Mod}(\mathcal{C})$. We proved that there is an equivalence of
triangulated categories $D^{b}(\mathrm{Mod}(C))\cong D^{b}(\mathrm{Mod}(%
\mathcal{T}))$. In the case of dualizing varieties, we proved a version of
Happel's  theorem for the categories of finitely presented functors. We also proved
in this paper, that there exists a relation between covariantly finite coresolving
categories, and generalized tilting categories. Extending theorems for artin
algebras proved in [4], [5]. In this article we consider the category of
maps, and relate tilting categories in the category of functors, with relative
tilting in the category of maps. Of special interest is the category $%
\mathrm{mod}(\mathrm{mod}\Lambda)$ with $\Lambda$ an artin algebra.

\end{abstract}

\maketitle

\section{\protect\bigskip Introduction and basic results}

This is the last article in a series of three in which, having in
mind applications to the category of functors from subcategories of modules
over a finite dimensional algebra to the category of abelian groups, we
generalize tilting theory, from rings to functor categories.

In the first paper [17] we generalized classical tilting to the category of
contravariant functors from a preadditive skeletally small category $%
\mathcal{C}$, to the category of abelian groups and generalized Bongartz's
proof [10] of Brenner-Butler's theorem [11]. We then applied the theory so
far developed, to the study of locally finite infinite quivers with no
relations, and computed the Auslander-Reiten components of infinite Dynkin
diagrams. Finally, we applied our results to calculate the Auslander-Reiten
components of the category of Koszul functors (see [19], [20], [21])
on a regular component of a finite dimensional algebra over a field. These
results generalize the theorems on the preprojective algebra obtained in
[15].

Following [12], in [18] we generalized the proof of Happel's theorem
given by Cline, Parshall and Scott: given a generalized tilting subcategory $%
\mathcal{T}$ of $\mathrm{Mod}(\mathcal{C})$, the derived categories of
bounded complexes $D^{b}(\mathrm{Mod}(\mathcal{C}))$ and $D^{b}(\mathrm{Mod}(%
\mathcal{T}))$ are equivalent, and we discussed a partial converse [14]. We
also saw that for a dualizing variety $\mathcal{C}$ and a tilting
subcategory $\mathcal{T}\subset \mathrm{mod}(\mathcal{C})$ with
pseudokerneles, the categories of finitely presented functors $\mathrm{mod}(%
\mathcal{C})$ and $\mathrm{mod}(\mathcal{T})$ have equivalent derived
bounded categories, $D^{b}(\mathrm{mod}(\mathcal{C}))\cong D^{b}(\mathrm{mod}%
(\mathcal{T}))$. Following closely the results for artin algebras obtained
in [3], [4], [5], by Auslander, Buchweits and Reiten, we end the paper
proving that for a Krull-Schmidt dualizing variety $\mathcal{C}$, there are
analogous relations between covariantly finite subcategories and generalized
tilting subcategories of $\mathrm{mod}(\mathcal{C})$.

This paper is dedicated to study tilting subcategories of $\mathrm{mod}(%
\mathcal{C})$. In order to have a better understanding of these categories,
we use the relation between the categories $\mathrm{mod}(\mathcal{C})$ and the
category of maps, $\mathrm{maps}(\mathcal{C})$, given by Auslander in [1].
Of special interest is the case when $\mathcal{C}$ is the category of
finitely generated left $\Lambda $-modules over an artin algebra $\Lambda $,
since in this case the category $\mathrm{maps}(\mathcal{C})$ is equivalent
to the category of finitely generated $\Gamma $modules, $\mathrm{mod}(\Gamma
), $ over the artin algebra of triangular matrices $\Gamma =\left(
\begin{array}{cc}
\Lambda & 0 \\
\Lambda & \Lambda%
\end{array}%
\right) $. In this situation, tilting subcategories on $\mathrm{mod}(\mathrm{%
mod}(\Lambda) \mathcal{)}$ will correspond to relative tilting subcategories
of $\mathrm{mod}(\Gamma )$, which in principle, are easier to compute.

The paper consists of three sections:

In the first section we establish the notation and recall some basic
concepts. In the second one, for a variety of annuli with pseudokerneles $%
\mathcal{C}$, we prove that generalized tilting subcategories of $\mathrm{mod%
}(\mathcal{C})$ are in correspondence with relative tilting subcategories of
$\mathrm{maps}(\mathcal{C})$ [9]. In the third section, we explore the
connections between $\mathrm{mod}\ \Gamma $, with $\Gamma =\left(
\begin{array}{cc}
\Lambda & 0 \\
\Lambda & \Lambda%
\end{array}%
\right) $ and the category $\mathrm{mod}(\mathrm{mod}(\Lambda) )$. We compare
the Auslander-Reiten sequences in $\mathrm{\mathrm{mod}}(\Gamma )$ with
Auslander-Reiten sequences in $\mathrm{mod}(\mathrm{mod}(\Lambda) )$. We end
the paper proving that, some important subcategories of $\mathrm{mod}(%
\mathcal{C})$ related with tilting, like: contravariantly, covariantly,
functorially finite [see 18], correspond to subcategories of $\mathrm{maps}(%
\mathcal{C})$ with similar properties.

\subsection{Functor Categories}

In this subsection we will denote by $\mathcal{C}$ an arbitrary skeletally
small pre additive category, and $\mathrm{Mod}(\mathcal{C})$ will be the
category of contravariant functors from $\mathcal{C}$ to the category of
abelian groups. The subcategory of $\mathrm{Mod}(\mathcal{C})$ consisting of
all finitely generated projective objects, $\mathfrak{p}(\mathcal{C})$, is a
skeletally small additive category in which idempotents split, the functor $%
P:\mathcal{C}\rightarrow \mathfrak{p}(\mathcal{C})$, $P(C)=\mathcal{C}(-,C)$%
, is fully faithful and induces by restriction $\mathrm{res}:\mathrm{Mod}(%
\mathfrak{p}(\mathcal{C}))\rightarrow \mathrm{Mod}(\mathcal{C})$, an
equivalence of categories. For this reason, we may assume that our
categories are skeletally small, additive categories, such that idempotents
split. Such categories were called \textbf{annuli varieties} in [2], for
short, varieties.

To fix the notation, we recall known results on functors and categories that
we use through the paper, referring for the proofs to the papers by
Auslander and Reiten [1], [4], [5].

Given a category $\mathcal{C}$ we will write for short, $\mathcal{C}(-,?)$
instead of $\mathrm{Hom}_{\mathcal{C}}(-,?)$ and when it is clear from the context we
use just $(-,?).$

\begin{definition}
Given a variety $\mathcal{C}$, we say $\mathcal{C}$ has \textbf{pseudokernels%
}; if given a map $f : C_1\rightarrow C_0$, there exists a map $g : C_2
\rightarrow C_1$ such that the sequence of representable functors $\mathcal{C%
}(-, C_2 )\xrightarrow{(-,g)}\mathcal{C}( -,C_1 )\xrightarrow{(-,f)}
\mathcal{C}(-, C_0 )$ is exact.
\end{definition}

A functor $M$ is \textbf{finitely presented}; if there exists an exact
sequence
\begin{equation*}
\mathcal{C}( -,C_1 )\rightarrow \mathcal{C}(-, C_0 )\rightarrow M\rightarrow 0
\end{equation*}

We denote by $\mathrm{mod}(\mathcal{C})$ the full subcategory of $\mathrm{Mod%
}(\mathcal{C})$ consisting of finitely presented functors. It was proved in
[1] $\mathrm{mod}(C)$ is abelian, if and only if, $\mathcal{C}$ has
pseudokernels.

\subsection{ Krull-Schmidt Categories}

We start giving some definitions from [6].

\begin{definition}
Let $R$ be a commutative artin ring. An $R$-variety $\mathcal{C}$, is a
variety such that $\mathcal{C}(C_{1},C_{2})$ is an $R$-module, and
composition is $R$-bilinear. Under these conditions $\mathrm{\mathrm{Mod}}(%
\mathcal{C})$ is an $R$-variety, which we identify with the category of
contravariant functors $(\mathcal{C}^{op},\mathrm{Mod}(R))$.

An $R$-variety $\mathcal{C}$ is $\mathrm{Hom}$-\textbf{finite}, if for each
pair of objects $C_{1},C_{2}$ in $\mathcal{C},$ the $R$-module $\mathcal{C}%
(C_{1},C_{2})$ is finitely generated. We denote by $(\mathcal{C}^{op},%
\mathrm{mod}(R))$, the full subcategory of $(\mathcal{C}^{op},\mathrm{%
\mathrm{Mod}}(R))$ consisting of the $\mathcal{C}$-modules such that; for
every $C$ in $\mathcal{C}$ the $R$-module $M(C)$ is finitely generated. The
category $(\mathcal{C}^{op},\mathrm{mod}(R))$ is abelian and the inclusion $(%
\mathcal{C}^{op},\mathrm{mod}(R))\rightarrow (\mathcal{C}^{op},\mathrm{%
\mathrm{Mod}}(R))$ is exact.
\end{definition}

The category $\mathrm{mod}(C)$ is a full subcategory of $(\mathcal{C}^{op},%
\mathrm{mod}(R))$. The functors $D:(\mathcal{C}^{op},\mathrm{mod}%
(R))\rightarrow (\mathcal{C},\mathrm{mod}(R))$, and $D:(\mathcal{C},\mathrm{%
mod}(R))\rightarrow (\mathcal{C}^{op},\mathrm{mod}(R))$, are defined as
follows: for any $C$ in $\mathcal{C}$, $D(M)(C)=\mathrm{Hom}%
_{R}(M(C),I(R/r)) $, with $r$ the Jacobson radical of $R$, and $I(R/r)$ is
the injective envelope of $R/r$. The functor $D$ defines a duality between $(%
\mathcal{C},\mathrm{mod}(R))$ and $(\mathcal{C}^{op},\mathrm{mod}(R))$. If $%
\mathcal{C}$ is an $\mathrm{Hom}$-finite $R$-category and $M$ is in $\mathrm{%
mod}(\mathcal{C})$, then $M(C)$ is a finitely generated $R$-module and it is
therefore in $\mathrm{mod}(R)$.

\begin{definition}
An $\mathrm{Hom}$-finite $R$-variety $\mathcal{C}$ is \textbf{dualizing}, if
the functor
\begin{equation*}
D:(\mathcal{C}^{op},\mathrm{mod}(R))\rightarrow (\mathcal{C},\mathrm{mod}(R))
\end{equation*}
induces a duality between the categories $\mathrm{mod}(\mathcal{C})$ and $%
\mathrm{mod}(\mathcal{C}^{op}).$
\end{definition}

It is clear from the definition that for dualizing categories $\mathcal{C}$
the category $\mathrm{mod}(\mathcal{C})$ has enough injectives.

To finish, we recall the following definition:

\begin{definition}
An additive category $\mathcal{C}$ is \textbf{Krull-Schmidt}, if every
object in $\mathcal{C}$ decomposes in a finite sum of objects whose
endomorphism ring is local.
\end{definition}

In [18 Theo. 2] we see that for a dualizing Krull-Schmidt variety the
finitely presented functors have projective covers.

\begin{theorem}
Let $\mathcal{C}$ a dualizing Krull-Schmidt $R$-variety. Then $\mathrm{mod}(%
\mathcal{C})$ is a dualizing Krull-Schmidt variety.
\end{theorem}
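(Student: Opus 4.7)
The plan is to verify the three requirements for $\mathrm{mod}(\mathcal{C})$ to be a dualizing Krull--Schmidt $R$-variety: (i) $\mathrm{mod}(\mathcal{C})$ is a $\Hom$-finite $R$-variety, (ii) it is Krull--Schmidt, and (iii) the $R$-duality $D$ restricts to a duality between $\mathrm{mod}(\mathrm{mod}(\mathcal{C}))$ and $\mathrm{mod}(\mathrm{mod}(\mathcal{C})^{op})$. Note first that since $\mathcal{C}$ is dualizing, $\mathrm{mod}(\mathcal{C})$ is abelian (so in particular $\mathcal{C}$ has pseudokernels), has enough projectives (the representables $\mathcal{C}(-,C)$) and, by the remark preceding the theorem, enough injectives. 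In particular $\mathrm{mod}(\mathcal{C})$ has pseudokernels, so $\mathrm{mod}(\mathrm{mod}(\mathcal{C}))$ makes sense as an abelian category, and the $R$-linearity of composition in $\mathcal{C}$ passes to $\mathrm{mod}(\mathcal{C})$ so that it is an $R$-variety.

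For (i), I would use Yoneda: given $M,N\in\mathrm{mod}(\mathcal{C})$ and a presentation $\mathcal{C}(-,C_1)\to\mathcal{C}(-,C_0)\to M\to 0$, $\Hom_{\mathrm{mod}(\mathcal{C})}(M,N)$ embeds in $N(C_0)$ as the kernel of $N(C_0)\to N(C_1)$. The values $N(C_i)$ lie in $\mathrm{mod}(R)$ (evaluate a presentation of $N$ and use that $\mathcal{C}$ is $\Hom$-finite), and $R$ is artinian, so the kernel is a finitely generated $R$-module.

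For (ii), once $\Hom$-finiteness is in hand, each $\mathrm{End}_{\mathrm{mod}(\mathcal{C})}(M)$ is an $R$-algebra that is finitely generated as an $R$-module, hence an artinian ring, hence semiperfect. Decomposing $1$ into a sum of primitive orthogonal idempotents and using that idempotents split in the abelian category $\mathrm{mod}(\mathcal{C})$ yields a finite decomposition $M=\bigoplus M_i$ with each $\mathrm{End}(M_i)$ local.

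The heart of the proof is (iii), and it is where I expect the real difficulty. Given $F\in\mathrm{mod}(\mathrm{mod}(\mathcal{C}))$ with a presentation
\begin{equation*}
\Hom_{\mathrm{mod}(\mathcal{C})}(-,M_1)\to\Hom_{\mathrm{mod}(\mathcal{C})}(-,M_0)\to F\to 0,
\end{equation*}
applying the exact contravariant functor $D$ gives a copresentation
\begin{equation*}
0\to D(F)\to D\bigl(\Hom_{\mathrm{mod}(\mathcal{C})}(-,M_0)\bigr)\to D\bigl(\Hom_{\mathrm{mod}(\mathcal{C})}(-,M_1)\bigr),
\end{equation*}
so the crux is to identify $D\bigl(\Hom_{\mathrm{mod}(\mathcal{C})}(-,M)\bigr)$, as a functor on $\mathrm{mod}(\mathcal{C})$, with a corepresentable functor $\Hom_{\mathrm{mod}(\mathcal{C})}(M,-)$-type object on $\mathrm{mod}(\mathcal{C})^{op}$ — more precisely, with an injective cogenerator of $\mathrm{mod}(\mathrm{mod}(\mathcal{C})^{op})$ built from $M$ via the duality $D\colon\mathrm{mod}(\mathcal{C})\to\mathrm{mod}(\mathcal{C}^{op})$ of the hypothesis. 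The dualizing property of $\mathcal{C}$ is precisely what allows this identification: it interchanges projectives and injectives between $\mathrm{mod}(\mathcal{C})$ and $\mathrm{mod}(\mathcal{C}^{op})$. One then shows that $D(F)$ is finitely cogenerated in $\mathrm{mod}(\mathrm{mod}(\mathcal{C})^{op})$, and symmetrically for $D$ applied to a finitely copresented $G\in\mathrm{mod}(\mathrm{mod}(\mathcal{C})^{op})$, together with the natural isomorphism $D\circ D\cong\mathrm{id}$ on $\mathrm{mod}(R)$-valued functors to conclude that $D$ induces the required duality. The main technical obstacle is thus the explicit description of $D$ on the representable functors of $\mathrm{mod}(\mathcal{C})$, which must be handled by combining the injective cogenerator structure inherited from $\mathcal{C}$ being dualizing with the bilinearity of the $R$-duality.
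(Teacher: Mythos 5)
The paper does not actually prove this statement: it is quoted from [18, Theo.\ 2] (and goes back to Auslander--Reiten [6]), so there is no in-paper argument to compare yours against; I therefore judge the proposal on its own. Your parts (i) and (ii) are correct and complete: $\Hom$-finiteness follows from left-exactness of $\Hom(-,N)$ applied to a presentation of $M$ together with Yoneda and the fact that $N(C)$ is a finitely generated $R$-module, and Krull--Schmidt then follows since each endomorphism ring is an artin $R$-algebra, hence semiperfect, and idempotents split in the abelian category $\mathrm{mod}(\mathcal{C})$.

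Part (iii), which you correctly identify as the heart of the matter, is however only an outline: you reduce to identifying $D\bigl(\Hom_{\mathrm{mod}(\mathcal{C})}(-,M)\bigr)$ and then explicitly defer that identification ("the main technical obstacle is thus\ldots"), so as written there is a gap exactly at the decisive step. The step is fillable, and here is how: since $\mathcal{C}$ is dualizing, $\mathrm{mod}(\mathcal{C})$ has enough injectives and the indecomposable injectives are the summands of $D(\mathcal{C}(C,-))$; for such an injective $I=D(\mathcal{C}(C,-))$ one has $\Hom_{\mathrm{mod}(\mathcal{C})}(X,I)\cong D(X(C))$, whence $D\Hom_{\mathrm{mod}(\mathcal{C})}(-,I)\cong \mathrm{ev}_C\cong \Hom_{\mathrm{mod}(\mathcal{C})}(\mathcal{C}(-,C),-)$ is representable. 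Choosing an injective copresentation $0\rightarrow M\rightarrow I^0\rightarrow I^1$ and applying $D$ to $0\rightarrow \Hom(-,M)\rightarrow \Hom(-,I^0)\rightarrow \Hom(-,I^1)$ then exhibits $D\Hom(-,M)$ as a cokernel of a map of representable covariant functors, hence finitely presented; your copresentation of $D(F)$ then lives between finitely presented objects of the abelian category $\mathrm{mod}\bigl((\mathrm{mod}(\mathcal{C}))^{op}\bigr)$ (abelian because $\mathrm{mod}(\mathcal{C})$ has cokernels), so $D(F)$ is finitely presented, and the symmetric argument plus $D\circ D\cong \mathrm{id}$ finishes the proof. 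Note also that "finitely cogenerated," which you invoke, is not the property you need -- you need $D(F)$ finitely \emph{presented}, and that is exactly what the missing lemma delivers.
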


\subsection{Contravariantly finite categories}

[4] Let $\mathscr X$ be a subcategory of $\mathrm{mod}(\mathcal{C})$,
which is closed under summands and isomorphisms. A morphism $f:X\rightarrow M
$ in $\mathrm{mod}(\mathcal{C})$, with $X$ in $\mathscr X$, is a \emph{right
}$\mathscr X$-\emph{approximation} of $M$, if $(-,X)_{\mathscr X}%
\xrightarrow{(-,h)_\mathscr X}(-,M)_{\mathscr X}\rightarrow 0$ is an exact
sequence, where $(-,?)_{\mathscr X}$  denotes the restriction of $(-,?)$ to
the category  $\mathscr X$. Dually, a morphism $g:M\rightarrow X$, with $X$
in $\mathscr X$, is a \emph{left} $\mathscr X$-\emph{approximation} of $M$,
if $(X,-)_{\mathscr X}\xrightarrow{(g,-)_\mathscr X}(M,-)_{\mathscr %
X}\rightarrow 0$ is exact.

A subcategory $\mathscr X$ of $\mathrm{mod}(\mathcal{C})$ is called \emph{%
contravariantly} (\emph{covariantly}) finite in $\mathrm{mod}(\mathcal{C})$,
if every object $M$ in $\mathrm{mod}(\mathcal{C})$ has a right (left) $%
\mathscr X$-approximation; and \emph{functorially finite}, if it is both
contravariantly and covariantly finite.

A subcategory $\mathscr X$ of $\mathrm{mod}(\mathcal{C})$ is \emph{resolving}
(\emph{coresolving}), if it satisfies the following three conditions: (a) it
is closed under extensions, (b) it is closed under kernels of epimorphisms
(cokernels of monomorphisms), and (c) it contains the projective (injective)
objects.

\subsection{Relative Homological Algebra and Frobenius Categories}

In this subsection we recall some results on relative homological algebra
introduced by Auslander and Solberg in [9],[see also 14, 23].

Let $\mathcal{C}$ be an additive category which is embedded as a full
subcategory of an abelian category $\mathcal{A}$, and suppose that $\mathcal{%
C}$ is closed under extensions in $\mathcal{A}$. Let $\mathcal{S}$ be a
collection of exact sequences in $\mathcal{A}$
\begin{equation*}
0\rightarrow X\xrightarrow{f}Y\xrightarrow{g}Z\rightarrow 0
\end{equation*}%
$f$ is called an \emph{admissible monomorphism}, and $g$ is called an \emph{%
admissible epimorphism}. A pair $(\mathcal{C},\mathcal{S})$ is called an
\emph{exact category} provided that: (a) Any split exact sequence whose
terms are in $\mathcal{C}$ is in $\mathcal{S}$. (b) The composition of
admissible monomorphisms (resp., epimorphisms) is an admissible monomorphism
(resp., epimorphism). (c) It is closed under pullbacks (pushouts) of
admissible epimorphisms (admissible monomorphisms).

Let $(\mathcal{C},\mathcal{S})$ be an exact subcategory of an abelian
category $\mathcal{A}$. Since the collection $\mathcal{S}$ is closed under
pushouts, pullbacks and Baer sums, it gives rise to a subfunctor $F$ of the
additive bifunctor $\mathrm{Ext}_{\mathcal{C}}^{1}(-,-):\mathcal{C}%
\times \mathcal{C}^{op}\rightarrow \mathbf{Ab}$ [9]. Given such a
functor $F$, we say that an exact sequence $\eta :0\rightarrow A\rightarrow
B\rightarrow C\rightarrow 0$ in $\mathcal{C}$ is $F$-exact, if $\eta $ is in
$F(C,A)$, we will write some times $\mathrm{Ext}_{F}^{1}(-,?)$ instead of $F(-,?)$.
An object $P$ in $\mathcal{C}$ is $F$-projective, if for each $F$-exact
sequence $0\rightarrow A\rightarrow B\rightarrow C\rightarrow 0$, the
sequence $0\rightarrow (P,N)\rightarrow (P,E)\rightarrow (P,M)\rightarrow 0$
is exact. Analogously we have the definition of an $F$-injective object.

If for any object $C$ in $\mathcal{C}$ there is an $F$-exact sequence $%
0\rightarrow A\rightarrow P\rightarrow C\rightarrow 0$, with $P$ an $F$%
-projective, then we say $(\mathcal{C},\mathcal{S})$ has enough $F$-
projectives. Dually, if for any object $C$ in $\mathcal{C}$ there is an $F$%
-exact sequence $0\rightarrow C\rightarrow I\rightarrow A\rightarrow 0$,
with $I$ an $F$-injective, then $(\mathcal{C},\mathcal{S})$ has enough $F-$
injectives.

An exact category $(\mathcal{C},\mathcal{S})$ is called \emph{Frobenius}, if
the category $(\mathcal{C},\mathcal{S})$ has enough $F$-projectives and enough $F$%
-injectives and they coincide.

Let $F$ be a subfunctor of $\mathrm{Ext}^1_{\mathcal{C}}(-,-)$. Suppose $F$
has enough projectives. Then for any $C$ in $\mathcal{C}$ there is an exact
sequence in $\mathcal{C}$ of the form
\begin{equation*}
\cdots P_n\xrightarrow{d_n}P_{n-1}\xrightarrow{d_{n-1}}\cdots\rightarrow P_1%
\xrightarrow{d_1} P_0\xrightarrow{d_0} C\rightarrow 0
\end{equation*}
where $P_i$ is $F$-projective for $i\ge 0$ and $0\rightarrow \mathrm{Im}%
d_{i+1}\rightarrow P_i\rightarrow \mathrm{Im}d_{i}\rightarrow 0$ is $F$%
-exact for all $i\ge 0$. Such sequence is called an $F$-\emph{exact
projective resolution}. Analogously we have the definition of an $F$-\emph{%
exact injective resolution}.

When $(\mathcal{C},\mathcal{S})$ has enough $F$-injectives $($enough $F$-
projectives), using $F$-exact injective resolutions (respectively, $F$-exact
projective resolutions), we can prove that for any object $C$ in $\mathcal{C}
$, ($A$ in $\mathcal{C}$ ), there exists a right derived functor of $\mathrm{%
Hom}_{\mathcal{C}}(C,-)$ ( $\mathrm{Hom}_{\mathcal{C}}(-,A)$ ).

We denote by $\mathrm{Ext}_{F}^{i}(C,-)$ the right derived functors of $%
\mathrm{Hom}_{\mathcal{C}}(C,-)$ and by $\mathrm{Ext}_{F}^{i}(-,A)$ the
right derived functors of $\mathrm{Hom}_{\mathcal{C}}(-,A)$.

\section{The maps category, $\mathrm{maps}(\mathcal{C})$}

In this section $\mathcal{C}$ is an annuli variety with pseudokerneles. We
will study tilting subcategories of $\mathrm{mod}(\mathcal{C})$ via the
equivalence of categories between the maps category, module the homotopy
relation, and the category of functors, $\mathrm{mod}(\mathcal{C})$, given
by Auslander in [1]. We will provide $\mathrm{maps}(\mathcal{C})$ with a
structure of exact category such that, tilting subcategories of $\mathrm{mod}%
(\mathcal{C})$ will correspond to relative tilting subcategories of $\mathrm{%
maps}(\mathcal{C})$. We begin the section recalling concepts and results
from [1], [14] and [23].

The objects in $\mathrm{maps}(\mathcal{C})$ are morphisms $%
(f_{1},A_{1},A_{0}):A_{1}\xrightarrow{f_1}A_{0}$, and the maps are pairs $%
(h_{1},h_{0}):(f_{1},A_{1},A_{0})\rightarrow (g_{1},B_{1},B_{0})$, such that
the following square commutes
\begin{equation*}
\begin{diagram}\dgARROWLENGTH=1em
\node{A_1}\arrow{e,t}{f_1}\arrow{s,l}{h_1}
\node{A_0}\arrow{s,l}{h_0}\\ \node{B_1}\arrow{e,t}{g_1} \node{B_0}
\end{diagram}
\end{equation*}%
We say that two maps $(h_{1},h_{0})$, $(h_{1}^{\prime },h_{0}^{\prime
}):(f_{1},A_{1},A_{0})\rightarrow (g_{1},B_{1},B_{0})$ are homotopic, if
there exist a morphisms $s:A_{0}\rightarrow B_{1}$ such that $%
h_{0}-h_{0}^{\prime }=g_{1}s$. Denote by $\underline{\mathrm{maps}}(\mathcal{%
C})$ the category of maps modulo the homotopy relation. It was proved in
[1] that the categories $\underline{\mathrm{maps}}(\mathcal{C})$ and $%
\mathrm{mod}(\mathcal{C})$ are equivalent. The equivalence is given by a
functor $\underline{\varPhi}:\underline{\mathrm{maps}}(\mathcal{C}%
)\rightarrow \mathrm{mod}(\mathcal{C})$ induced by the functor $\varPhi:%
\mathrm{map}(\mathcal{C})\rightarrow \mathrm{mod}(\mathcal{C})$ given by
\begin{equation*}
\varPhi(A_{1}\xrightarrow{f_1}A_{0})=\mathrm{Coker}((-,A_{1})%
\xrightarrow{(-,f_1)}(-,A_{0}))\text{.}
\end{equation*}

The category $\mathrm{maps}(\mathcal{C})$ is not in general an exact
category, we will use instead the exact category $P^{0}(\mathcal{A})$ of
projective resolutions, which module the homotopy relation, is equivalent to
$\underline{\mathrm{maps}}(\mathcal{C})$.

Since we are assuming $\mathcal{C}$ has pseudokerneles, the category $%
\mathcal{A}=\mathrm{mod}(\mathcal{C})$ is abelian. We can consider the
categories of complexes $C(\mathcal{A})$, and its subcategory $C^{-}(%
\mathcal{A})$, of bounded above complexes, both are abelian. Moreover, if we
consider the class of exact sequences $\mathcal{S}$: $0\rightarrow {L{.}}%
\xrightarrow{j}M{.}\xrightarrow{\pi}N{.}\rightarrow 0$, such that, for every
$k$, the exact sequences $0\rightarrow L_{k}\xrightarrow{j_k}M_{k}%
\xrightarrow{\pi_k}N_{k}\rightarrow 0$ split, then $(\mathcal{S},C(\mathcal{A%
}))$, $(\mathcal{S},C^{-}(\mathcal{A}))$ are exact categories with enough
projectives, in fact they are both Frobenius. In the first case the
projective are summands of complexes of the form:
\begin{equation*}
\cdots B_{k+2}\coprod B_{k+1}\xrightarrow{\left[%
\begin{matrix} 0&1\\ 0 &0 \end{matrix}\right]}B_{k+1}\coprod B_{k}%
\xrightarrow{\left[\begin{matrix} 0&1\\ 0 &0 \end{matrix}\right]}%
B_{k}\coprod B_{k-1}\cdots
\end{equation*}

In the second case of the form:

\begin{equation*}
\cdots B_{k+3}\coprod B_{k+2}\xrightarrow{\left[\begin{matrix}
0&1\\ 0&0 \end{matrix}\right]}B_{k+2}\coprod B_{k+1}\xrightarrow{\left[%
\begin{matrix} 0&1\\ 0 &0 \end{matrix}\right]}B_{k+1}\coprod B_{k}%
\xrightarrow{\left[\begin{matrix} 0&1 \end{matrix}\right]}B_{k}\rightarrow 0
\end{equation*}

If we denote by $\underline{C^{-}}(\mathcal{A})$ the stable category, it is
well known [23], [14], that the homotopy category ${K}^{-}(\mathcal{A})$ and
$\underline{C^{-}}(\mathcal{A})$ are equivalent.

Now, denote by $P^{0}(\mathcal{A})$ the full subcategory of $C^{-}(\mathcal{A%
})$ consisting of projective resolutions, this is, complexes of projectives $%
P.$:
\begin{equation*}
\cdots  P_{k}\rightarrow P_{k-1}\rightarrow \cdots \rightarrow
P_{1}\rightarrow P_{0}\rightarrow 0
\end{equation*}%
such that $H^{i}(P.)=0$ for $i\neq 0$. Then we have the following:

\begin{proposition}
The category $P^{0}(\mathcal{A})$ is closed under extensions and kernels of
epimorphisms.
\end{proposition}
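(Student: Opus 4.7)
The plan is to verify both closure properties directly using two standard ingredients: (i) termwise splitness of sequences in $\mathcal{S}$ preserves the property ``each $P_k$ is projective'' under passage to kernels/extensions, and (ii) the long exact sequence in homology transports the acyclicity condition $H_i = 0$ for $i>0$.

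First I would set up the closure under extensions. Suppose $0 \to L_\bullet \xrightarrow{j} M_\bullet \xrightarrow{\pi} N_\bullet \to 0$ lies in $\mathcal{S}$, with $L_\bullet, N_\bullet \in P^0(\mathcal{A})$. By definition of $\mathcal{S}$ each $0 \to L_k \to M_k \to N_k \to 0$ splits, so $M_k \cong L_k \oplus N_k$. Since summands and finite coproducts of projectives are projective, every $M_k$ is projective. For acyclicity, the short exact sequence of complexes produces the usual long exact sequence in homology
\begin{equation*}
\cdots \to H_i(L_\bullet) \to H_i(M_\bullet) \to H_i(N_\bullet) \to H_{i-1}(L_\bullet) \to \cdots
\end{equation*}
Since $L_\bullet$ and $N_\bullet$ are projective resolutions, $H_i(L_\bullet) = H_i(N_\bullet) = 0$ for $i \geq 1$, and the adjacent zeros force $H_i(M_\bullet)=0$ for $i\geq 1$. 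Thus $M_\bullet \in P^0(\mathcal{A})$.

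Next I would treat closure under kernels of admissible epimorphisms. Let $\pi \colon M_\bullet \to N_\bullet$ be an admissible epimorphism in $(C^-(\mathcal{A}),\mathcal{S})$ with $M_\bullet, N_\bullet \in P^0(\mathcal{A})$, and put $L_\bullet = \ker \pi$. Again each $0 \to L_k \to M_k \to N_k \to 0$ splits, so $L_k$ is a direct summand of the projective object $M_k$, hence projective. For acyclicity, the long exact sequence gives, for each $i\geq 1$,
\begin{equation*}
H_{i+1}(N_\bullet) \to H_i(L_\bullet) \to H_i(M_\bullet),
\end{equation*}
and both flanking groups vanish since $i\geq 1$ and $i+1\geq 2$; therefore $H_i(L_\bullet)=0$ for $i\geq 1$, which shows $L_\bullet \in P^0(\mathcal{A})$.

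I do not expect a serious obstacle: the only subtle point is to keep the degree conventions straight (the complexes live in non-negative degrees and are acyclic above degree zero, so the ``troublesome'' index shift in the connecting map never produces a surviving homology group), and to invoke correctly that $\mathcal{S}$ consists of termwise split short exact sequences so that projectivity passes to summands and direct sums. Both verifications reduce to one diagram chase in homology and one observation about projectives, so the proof will be short.
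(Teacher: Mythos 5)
Your proof is correct and follows essentially the same route as the paper's: termwise splitness gives projectivity of each term of the middle (resp.\ kernel) complex as a summand of projectives, and the long exact homology sequence transports the vanishing of higher homology. No substantive difference from the argument in the paper.
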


\begin{proof}
If $0\rightarrow P.\rightarrow E.\rightarrow Q.\rightarrow 0$ is an exact
sequence in $P^{0}(\mathcal{A})$, then $0\rightarrow P_{j}\rightarrow
E_{j}\rightarrow Q_{j}\rightarrow 0$ is a splitting exact sequence in $%
\mathcal{A}$ with $P_{j}$, $Q_{j}$ projectives, hence $E_{j}$ is also
projective. By the long homology sequence we have the exact sequence: $%
\cdots \rightarrow H^{i}(P.)\rightarrow H^{i}(E.)\rightarrow
H^{i}(Q.)\rightarrow H^{i-1}(P.)\rightarrow \cdots $, with $%
H^{i}(P.)=H^{i}(Q.)=0$, for $i\neq 0$. This implies $E.\in P^{0}(\mathcal{A}%
) $.

Now, let $0\rightarrow T.\rightarrow Q.\rightarrow P.\rightarrow 0$ be an
exact sequence with $Q.$, $P.$ in $P^{0}(\mathcal{A})$. This implies that
for each $k$, $0\rightarrow T_{k}\rightarrow Q_{k}\rightarrow
P_{k}\rightarrow 0$ is an exact and splittable sequence, hence each $T_{k}$
is projective and, by the long homology sequence, we have the following
exact sequence
\begin{equation*}
\cdots \rightarrow H^{1}(T.)\rightarrow
H^{1}(Q.)\rightarrow H^{1}(P.)\rightarrow H^{0}(T.)\rightarrow
H^{0}(Q.)\rightarrow H^{0}(P.)\rightarrow 0
\end{equation*}%
with $H^{i+1}(P.)=H^{i}(Q.)=0$ for $i\geq 1$. This implies $H^{i}(T.)=0$,
for $i\neq 0$.
\end{proof}

If $\mathcal{S}_{P^{0}(\mathcal{A})}$ denotes the collection of exact
sequences with objects in $P^{0}(\mathcal{A})$, then $(P^{0}(\mathcal{A}),%
\mathcal{S}_{P^{0}(\mathcal{A})})$ is an exact subcategory of $(C^{-}(%
\mathcal{A}),\mathcal{S})$. The category $P^{0}(\mathcal{A})$ has enough
projectives, they are the complexes of the form:

\begin{equation}\label{relativeproj}
\cdots \rightarrow P_{3}\coprod P_{2}\xrightarrow{\left[\begin{matrix} 0&1\\
0&0 \end{matrix}\right]}P_{2}\coprod P_{1}\xrightarrow{\left[\begin{matrix}
0&1\\ 0&0 \end{matrix}\right]}P_{1}\coprod P_{0}\rightarrow 0
\end{equation}

Denote by $R^{0}(\mathcal{A})$ the category $P^{0}(\mathcal{A})$ module the
homotopy relation. This is: $R^{0}(\mathcal{A})$ is a full subcategory of $%
\underline{\mathcal{C}^{-}}(\mathcal{A})=K^{-}(\mathcal{A})$. It is easy to
check that $R^{0}(\mathcal{A})$ is the category with objects in $P^{0}(%
\mathcal{A})$ and maps the maps of complexes, module the maps that factor
through a complex of the form:

\begin{equation*}
\cdots \rightarrow P_{3}\coprod P_{2}\xrightarrow{\left[\begin{matrix} 0&1\\
0&0 \end{matrix}\right]}P_{2}\coprod P_{1}\xrightarrow{\left[\begin{matrix}
0&1\\ 0&0 \end{matrix}\right]}P_{1}\coprod P_{0}\xrightarrow{\left[%
\begin{matrix} 0&1 \end{matrix}\right]}P_{0}\rightarrow 0
\end{equation*}

We have the following:

\bigskip

\begin{proposition}
There is a functor $\varPsi:P^{0}(\mathcal{A})\rightarrow \mathrm{maps}(%
\mathcal{C})$ which induces an equivalence of categories $\underline{\varPsi}%
:R^{0}(\mathcal{A})\rightarrow \underline{\mathrm{maps}}(\mathcal{C})$ given
by:
\begin{equation*}
\varPsi(P.)=\varPsi(\cdots \rightarrow (-,A_{2})\xrightarrow{(-,f_2)}%
(-,A_{1})\xrightarrow{(-,f_1)}(-,A_{0})\rightarrow 0)=A_{1}\xrightarrow{f_1}%
A_{0}
\end{equation*}
\end{proposition}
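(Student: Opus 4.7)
The construction of $\varPsi$ is entirely Yoneda--theoretic. Since $\mathcal{C}$ is a variety in which idempotents split, the representable embedding $A \mapsto (-,A)$ is an equivalence of $\mathcal{C}$ onto the full subcategory $\mathfrak{p}(\mathcal{C})$ of finitely generated projectives in $\mathcal{A}=\mathrm{mod}(\mathcal{C})$. Consequently each $P_i$ in a projective resolution $P.\in P^{0}(\mathcal{A})$ has the form $(-,A_i)$ for an essentially unique $A_i\in\mathcal{C}$, and the differential $d_1:P_1\to P_0$ is $(-,f_1)$ for a unique $f_1:A_1\to A_0$. I set $\varPsi(P.)=(A_1\xrightarrow{f_1}A_0)$. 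A chain map $g.:P.\to Q.$ (with $Q_i=(-,B_i)$ and $d_1^Q=(-,g_1^Q)$) yields, by Yoneda, morphisms $h_i:A_i\to B_i$ in $\mathcal{C}$, and the chain-map identity $d_1^Q g_1=g_0 d_1^P$ becomes the commuting square $g_1^Q h_1 = h_0 f_1$, which is precisely a morphism $(h_1,h_0)$ in $\mathrm{maps}(\mathcal{C})$. Functoriality of $\varPsi$ follows from the functoriality of Yoneda.

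\textbf{Descent to $R^{0}(\mathcal{A})$.} If $g.,g'.:P.\to Q.$ are chain homotopic via $s_\bullet$, then the degree-zero piece $s_0:P_0\to Q_1$ satisfies $g_0-g_0'=d_1^Q s_0$, which under Yoneda is the relation $h_0-h_0'=g_1^Q\sigma$ for some $\sigma:A_0\to B_1$. This is exactly the homotopy relation that defines $\underline{\mathrm{maps}}(\mathcal{C})$, so $\varPsi$ descends to $\underline{\varPsi}:R^{0}(\mathcal{A})\to\underline{\mathrm{maps}}(\mathcal{C})$.

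\textbf{Equivalence via $2$-out-of-$3$.} To conclude I would invoke Auslander's equivalence $\underline{\varPhi}:\underline{\mathrm{maps}}(\mathcal{C})\to\mathrm{mod}(\mathcal{C})$ recalled in the excerpt. By the very definition of $\varPhi$, the composite $\underline{\varPhi}\circ\underline{\varPsi}$ sends $P.$ to $\mathrm{Coker}((-,f_1))=H^{0}(P.)$, so $\underline{\varPsi}$ is an equivalence if and only if $H^{0}:R^{0}(\mathcal{A})\to\mathrm{mod}(\mathcal{C})$ is. Because $\mathcal{C}$ has pseudokernels, $\mathrm{mod}(\mathcal{C})$ is abelian with enough projectives, so every $M$ admits a resolution in $P^{0}(\mathcal{A})$, giving essential surjectivity of $H^{0}$; fullness and faithfulness are the classical comparison theorem for projective resolutions. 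The only substantive step--and therefore the main obstacle--is the uniqueness up to chain homotopy: given two lifts $g.,g'.$ of the same map on $H^{0}$, one inductively constructs $s_i:P_i\to Q_{i+1}$ by observing that $g_i-g_i'-s_{i-1}d_i^{P}$ is killed by $d_i^{Q}$, hence lies in $\mathrm{Im}\, d_{i+1}^{Q}$ by acyclicity of $Q.$ in positive degrees, and then lifts along $d_{i+1}^{Q}$ by projectivity of $P_i$. This induction is routine but indispensable, and combined with the two preceding paragraphs it completes the proof.
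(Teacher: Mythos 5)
Your proof is correct, but it reaches the conclusion by a different route than the paper. The paper verifies directly that $\underline{\varPsi}$ is dense (pseudokernels let one extend any $f_1:A_1\rightarrow A_0$ to a complex in $P^{0}(\mathcal{A})$), faithful (a null-homotopy $s_0:A_0\rightarrow B_1$ in $\mathrm{maps}(\mathcal{C})$ lifts to a chain homotopy, and conversely), and full (a commuting square lifts to a chain map by projectivity). You instead observe that $\underline{\varPhi}\circ\underline{\varPsi}=H^{0}$ and argue by two-out-of-three, taking Auslander's equivalence $\underline{\varPhi}$ as given and reducing everything to the classical comparison theorem for projective resolutions, whose homotopy-uniqueness step you correctly identify as the real content. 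Note that this reverses the logical order of the paper: the Corollary immediately following the Proposition deduces that $\underline{\varTheta}=\underline{\varPhi}\,\underline{\varPsi}$ is an equivalence \emph{from} the Proposition, whereas you deduce the Proposition from the equivalence $\underline{\varTheta}=H^{0}$; there is no circularity, since the comparison theorem is proved independently, but the two arguments package the same ingredients (pseudokernels for density, projectivity for lifting, homotopy uniqueness for faithfulness) in opposite directions. What your version buys is economy — you avoid re-proving the lifting lemmas for $\underline{\varPsi}$ itself — at the cost of leaning on $\underline{\varPhi}$ being an equivalence as a black box; the paper's direct argument is self-contained at the level of $\mathrm{maps}(\mathcal{C})$ and makes explicit the correspondence of homotopies that is used again later (e.g.\ in the proof of Proposition \ref{omegaP} and the almost split sequence comparisons).
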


\begin{proof}
Since $\mathcal{C}$ has pseudokerneles, any map $A_{1}\xrightarrow{f_1}A_{0}$
induces an exact sequence
\begin{equation*}
(-,A_{n})\xrightarrow{(-,f_n)}(-,A_{n-1})\rightarrow \cdots \rightarrow
(-,A_{2})\xrightarrow{(-,f_2)}(-,A_{1})\xrightarrow{(-,f_1)}%
(-,A_{0})
\end{equation*}%
and $\varPsi$ is clearly dense. Let $(-,\varphi ):P.\rightarrow Q.$ be a map
of complexes in $P^{0}(\mathcal{A})$:
\begin{equation}
\begin{diagram}\dgARROWLENGTH=1em \node{\cdots}\arrow{e}
\node{(-,A_2)}\arrow{e,t}{(-,f_2)}\arrow{s,l}{(-,\varphi_2)}
\node{(-,A_1)}\arrow{e,t}{(-,f_1)}\arrow{s,l}{(-,\varphi_1)}
\node{(-,A_0)}\arrow{e}\arrow{s,l}{(-,\varphi_0)} \node{0}\\
\node{\cdots}\arrow{e} \node{(-,B_2)}\arrow{e,t}{(-,g_2)}
\node{(-,B_1)}\arrow{e,t}{(-,g_1)} \node{(-,B_0)}\arrow{e} \node{0}
\end{diagram}  \label{maps1}
\end{equation}

If $\varPsi(P.\xrightarrow{(-,\varphi)}Q.)$ is homotopic to zero, then we
have a map $s_{0}:A_{0}\rightarrow A_{1}$ such that $g_{0}s_{0}=\varphi _{0}$%
:
\begin{equation*}
\begin{diagram}\dgARROWLENGTH=1em
\node{A_1}\arrow{e,t}{f_1}\arrow{s,l}{\varphi_1}
\node{A_0}\arrow{s,l}{\varphi_0}\arrow{sw,t}{s_0}\\
\node{B_1}\arrow{e,t}{g_1} \node{B_0} \end{diagram}
\end{equation*}%
and $s_{0}$ lifts to a homotopy $s:P.\rightarrow Q.$. Conversely, any
homotopy $s:P.\rightarrow Q.$ induces an homotopy in $\mathrm{maps}(\mathcal{%
C})$. Then $\varPsi$ is faithful.

If $\varPsi(P.)=(f_{1},A_{1},A_{0})$, $\varPsi(Q.)=(g_{1},B_{1},B_{0})$ and $%
(h_{0},h_{1}):\varPsi(P.)\rightarrow \varPsi(Q.)$ is a map in $\mathrm{maps}(%
\mathcal{C})$, then $(h_{0},h_{1})$ lifts to a map $(-,h)={(-,h_{i})}%
:P.\rightarrow Q.$, and $\varPsi$ is full.
\end{proof}

\begin{corollary}
There is an equivalence of categories $\Theta :R^{0}(\mathcal{A})\rightarrow
\mathrm{mod}(\mathcal{C})$ given by $\underline{\varTheta}=\underline{\varPhi%
}\underline{\varPsi}$, with $\varTheta=\varPhi\varPsi$.
\end{corollary}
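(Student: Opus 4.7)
The plan is essentially to observe that $\underline{\varTheta}$ is defined to be the composition of two already‐established equivalences, so there is nothing to do except check that the composition is well defined and then invoke the standard fact that a composition of equivalences of categories is an equivalence.

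First I would record the two input equivalences that are now available. On one side, the preceding Proposition gives an equivalence $\underline{\varPsi}\colon R^{0}(\mathcal{A})\to \underline{\mathrm{maps}}(\mathcal{C})$ induced by the functor $\varPsi\colon P^{0}(\mathcal{A})\to \mathrm{maps}(\mathcal{C})$ that sends a projective resolution to the map in its two lowest terms. On the other side, Auslander's theorem recalled at the opening of this section provides an equivalence $\underline{\varPhi}\colon \underline{\mathrm{maps}}(\mathcal{C})\to \mathrm{mod}(\mathcal{C})$ induced by the cokernel functor $\varPhi\colon \mathrm{maps}(\mathcal{C})\to \mathrm{mod}(\mathcal{C})$.

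Next I would define $\varTheta:=\varPhi\varPsi\colon P^{0}(\mathcal{A})\to \mathrm{mod}(\mathcal{C})$ and verify that it descends to a functor $\underline{\varTheta}\colon R^{0}(\mathcal{A})\to \mathrm{mod}(\mathcal{C})$. For this it is enough to observe that $\varPsi$ sends a null-homotopic map of complexes to a homotopic pair of maps in $\mathrm{maps}(\mathcal{C})$ (this is the content of the faithfulness argument in the previous proposition), and that $\varPhi$ then sends homotopic pairs in $\mathrm{maps}(\mathcal{C})$ to equal morphisms in $\mathrm{mod}(\mathcal{C})$ (this is built into Auslander's description of $\underline{\varPhi}$). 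In particular, morphisms factoring through the relative projective complexes of the form displayed in \eqref{relativeproj} go to zero under $\underline{\varTheta}$, so the composition is well defined at the level of the homotopy quotient.

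Finally, since $\underline{\varTheta}=\underline{\varPhi}\circ\underline{\varPsi}$ is a composition of equivalences of categories, it is itself an equivalence. The main (and only) potential obstacle is the bookkeeping verification that the homotopy relation used to define $R^{0}(\mathcal{A})$ matches the one needed for $\varPhi\varPsi$ to be well defined; but as indicated above this is already implicit in the proof of the previous proposition, where the correspondence between the two notions of homotopy was set up explicitly, so no new work is required.
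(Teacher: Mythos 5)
Your proof is correct and matches the paper's (implicit) approach: the paper states this corollary without proof, treating it as the immediate composition of the equivalence $\underline{\varPsi}$ from the preceding proposition with Auslander's equivalence $\underline{\varPhi}$, which is exactly what you do. Your additional check that the homotopy relations are compatible is a reasonable explicitation of what the paper leaves tacit.
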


\begin{proposition}
\label{omegaP} Let $P.$ be an object in $P^{0}(\mathcal{A})$, denote by $%
\mathrm{rpdim}P.$ the relative projective dimension of $P.$ , and by $%
\mathrm{pdim}\varTheta(P.)$ the projective dimension of $\varTheta(P.)$.
Then we have $\mathrm{rpdim}P.=\mathrm{pdim}\varTheta(P.)$. Moreover, if $%
\Omega ^{i}(P.)$ is the relative syzygy of $P.$, then for all $i\geq 0$, we
have $\Omega ^{i}(\varTheta(P.))=\varTheta(\Omega ^{i}(P.))$.
\end{proposition}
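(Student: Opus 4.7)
My plan is to use the observation that $\varTheta(P.)=\mathrm{Coker}((-,A_{1})\to(-,A_{0}))$ is nothing but the zeroth cohomology $H^{0}(P.)$, and then to show that $\varTheta$ transports the relative exact structure on $P^{0}(\mathcal{A})$ to the ordinary one on $\mathrm{mod}(\mathcal{C})$; once this is established the two conclusions of the proposition follow by induction on $i$.

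First I would verify two key properties of $\varTheta$. \emph{(i) $\varTheta$ is exact on $F$-exact sequences.} Given an $F$-exact $0\to P.\to Q.\to R.\to 0$ in $P^{0}(\mathcal{A})$, the degreewise splitting makes it a short exact sequence of complexes in $\mathcal{A}$, and the long exact cohomology sequence collapses, since the cohomology of all three complexes is concentrated in degree zero, to the short exact sequence $0\to\varTheta(P.)\to\varTheta(Q.)\to\varTheta(R.)\to 0$ in $\mathrm{mod}(\mathcal{C})$. \emph{(ii) $\varTheta$ sends relative projectives to projectives.} For the complex displayed in \eqref{relativeproj}, a direct calculation of the cokernel of the last differential $P_{2}\oplus P_{1}\to P_{1}\oplus P_{0}$, whose image is the first summand, yields $P_{0}$, which is representable and hence projective in $\mathrm{mod}(\mathcal{C})$.

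Combining (i) and (ii), an $F$-exact projective resolution of $P.$ of length $n$ in $P^{0}(\mathcal{A})$ is carried by $\varTheta$ to a projective resolution of $\varTheta(P.)$ of length $n$ in $\mathrm{mod}(\mathcal{C})$; this yields both $\mathrm{pdim}\,\varTheta(P.)\le\mathrm{rpdim}\,P.$ and, by induction on the $F$-exact syzygy sequences $0\to\Omega^{i+1}(P.)\to Q^{i}.\to\Omega^{i}(P.)\to 0$, the identity $\varTheta(\Omega^{i}(P.))=\Omega^{i}(\varTheta(P.))$. For the reverse inequality I would prove the converse of (ii): if $\varTheta(X.)$ is projective in $\mathrm{mod}(\mathcal{C})$, then $X.$ is a relative projective of $P^{0}(\mathcal{A})$. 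Indeed, by the equivalence $\underline{\varTheta}\colon R^{0}(\mathcal{A})\to\mathrm{mod}(\mathcal{C})$ of the preceding corollary, $X.$ is isomorphic in $R^{0}(\mathcal{A})$ to the stalk complex concentrated in degree zero on $\varTheta(X.)$, which is itself a relative projective of the form \eqref{relativeproj}. Lifting this stable isomorphism gives $X.\oplus Q'.\cong \varTheta(X.)\oplus Q''.$ for suitable relative projectives $Q'.,Q''.$ in $P^{0}(\mathcal{A})$; since relative projectives are closed under direct summands in any exact category with enough projectives, $X.$ itself is relative projective. Applied to $X.=\Omega^{n}(P.)$ with $n=\mathrm{pdim}\,\varTheta(P.)$, this gives $\mathrm{rpdim}\,P.\le n$.

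The main obstacle is precisely this converse to (ii). The preservation direction is a straightforward cohomology computation, but \emph{reflecting} projectivity requires the full strength of the equivalence $\underline{\varTheta}$, together with the explicit description of the maps killed by the quotient $P^{0}(\mathcal{A})\to R^{0}(\mathcal{A})$ (those factoring through a complex of the form \eqref{relativeproj}), and the closure of the class of relative projectives under direct summands in the exact structure $(P^{0}(\mathcal{A}),\mathcal{S}_{P^{0}(\mathcal{A})})$.
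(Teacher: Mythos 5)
Your argument is correct, but it reaches the conclusion by a genuinely different route than the paper. The paper's proof is constructive: starting from the finite complex $P.$ it assembles, via an explicit commutative diagram, a concrete relative projective resolution $0\to\mathbf{Q}_{n}\to\cdots\to\mathbf{Q}_{0}\to P.\to 0$ whose terms are the coproduct complexes $\mathbf{Q}_{i}$ with $\varTheta(\mathbf{Q}_{i})=(-,A_{i})$, and it identifies the relative syzygies explicitly as the truncations $0\to(-,A_{n})\to\cdots\to(-,A_{i+1})\to 0$ of $P.$; both the syzygy identity and the two inequalities between $\mathrm{rpdim}$ and $\mathrm{pdim}$ are then read off from this one resolution, and the explicit syzygies are reused later (Corollary 2.5). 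You instead argue formally: $\varTheta=H^{0}$ is exact on $\mathcal{S}$-sequences, sends relative projectives to projectives, and --- the key extra ingredient on your side --- \emph{reflects} projectivity, which you extract from the equivalence $\underline{\varTheta}$ plus the standard ``stably isomorphic iff isomorphic after adding projective summands'' lemma and closure of projectives under summands. Two points worth tightening: (a) the isomorphism you obtain lives in $R^{0}(\mathcal{A})$, i.e.\ modulo homotopy, whereas the add-a-summand lemma operates modulo maps factoring through relative projectives; this is harmless because the contractible complexes $\cdots\to P_{1}\coprod P_{0}\to P_{0}\to 0$ are themselves of the form (\ref{relativeproj}), so the homotopy ideal sits inside the relative-projective ideal, but it deserves a sentence. (b) As in the paper, $\varTheta(\Omega^{i}(P.))=\Omega^{i}(\varTheta(P.))$ should be read up to (relative) projective summands, via Schanuel. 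Your approach buys a shorter, diagram-free proof and isolates a reusable statement ($\varTheta$ preserves and reflects relative projectivity); the paper's buys the explicit shape of the syzygies. Note also that your reflection step admits a more elementary proof avoiding $\underline{\varTheta}$ altogether: if $\varTheta(X.)$ is projective, the augmented complex of $X.$ is an exact complex of projectives terminating in a projective, hence split exact, so $X.$ decomposes as the stalk complex on $\varTheta(X.)$ plus a contractible complex, both relative projective.
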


\begin{proof}
Let $P.$ be the complex resolution
\begin{equation*}
\text{0}\rightarrow \text{(-,}A_{n})\xrightarrow{(-,f_n)}(\text{-,}%
A_{n-1})\rightarrow \cdots \rightarrow (\text{-,}A_{2})\xrightarrow{(-,f_2)}(%
\text{-,}A_{1})\xrightarrow{(-,f_1)}\text{(-,}A_{0}\text{)}\rightarrow \text{%
0}
\end{equation*}%
and $M=\mathrm{Coker}(-,f_{1})$, then $\mathrm{pdim}M\leq n$.

Now, consider the following commutative diagram
\begin{equation*}
\begin{diagram}\dgARROWLENGTH=.9em
\node{} \node{} \node{}
\node{0}\arrow{e,t}{}\arrow{s,=,-} \node{A_n}\arrow{s,l}{\big(
^{1}_{0}\big)}\\ \node{} \node{} \node{} \node{A_n}\arrow{e,t}{-\big(
^{1}_{0}\big)}\arrow{s,..,-} \node{A_n\coprod A_{n-1}}\arrow{s,..,-}\\
\node{} \node{} \node{A_n}\arrow{s,r}{\big( ^{1}_{f_n}\big)} \arrow{e,..,-}
\node{A_3}\arrow{s,r}{\big( ^{1}_{f_3}\big)}\arrow{e,t}{f_3}
\node{A_2}\arrow{s,r}{\big( ^{1}_{f_2}\big)}\\ \node{}
\node{A_n}\arrow{s,=}\arrow{e,t}{-\big( ^{1}_{0}\big)} \node{A_n\coprod
A_{n-1}}\arrow{s,r}{(-f_n\; 1)}\arrow{e,..,-} \node{A_3\coprod
A_2}\arrow{s,r}{(-f_3\; 1)}\arrow{e,t}{\big( _{0\; 0}^{0\; 1} \big)}
\node{A_2\coprod A_1}\arrow{s,r}{(f_2\; -1)}\\ \node{}
\node{A_n}\arrow{s,r}{\big( ^{1}_{f_n}\big)} \arrow{e,t}{f_n}
\node{A_{n-1}}\arrow{s,r}{\big( ^{1}_{f_{n-1}}\big)}\arrow{e,..,-}
\node{A_2}\arrow{s,r}{\big( ^{1}_{f_2}\big)}\arrow{e,t}{f_2}
\node{A_1}\arrow{s,r}{\big( ^{1}_{f_1}\big)}\\
\node{A_n}\arrow{s,=}\arrow{e,t}{-\big( ^{1}_{0}\big)} \node{A_n\coprod
A_{n-1}}\arrow{s,r}{(-f_n\; 1)}\arrow{e,t}{\big( _{0\; 0}^{0\; 1} \big)}
\node{A_{n-1}\coprod A_{n-2}}\arrow{s,r}{(f_{n-1}\; -1)}\arrow{e,..,-}
\node{A_2\coprod A_1}\arrow{e,t}{\big( _{0\; 0}^{0\; 1}
\big)}\arrow{s,r}{(f_2\; -1)} \node{A_1\coprod A_{0}}\arrow{s,r}{(-f_1\;
1)}\\ \node{A_n}\arrow{e,t}{f_n} \node{A_{n-1}}\arrow{e,t}{f_{n-1}}
\node{A_{n-2}}\arrow{e,..,-} \node{A_1}\arrow{e,t}{f_1} \node{A_0}
\end{diagram}
\end{equation*}

Set $\mathbf{Q}_{n}=0\rightarrow (-,A_{n})\rightarrow 0$, and for $n-1\geq
i\geq 1$ consider the following complex $\mathbf{Q}_{i}$:
\begin{eqnarray*}
0\rightarrow (-,A_{n})\rightarrow (-,A_{n})\coprod (-,A_{n-1})\rightarrow\cdots\rightarrow\\
\rightarrow (-,A_{i+2})\coprod (-,A_{i+1})\rightarrow (-,A_{i+1})\coprod (-,A_{i})\rightarrow 0
\end{eqnarray*}

Then we have a relative projective resolution
\begin{equation*}
0\rightarrow \mathbf{Q}_{n}\rightarrow \mathbf{Q}_{n-1}\rightarrow \cdots
\rightarrow \mathbf{Q}_{1}\rightarrow \mathbf{Q}_{0}\rightarrow
P.\rightarrow 0
\end{equation*}%
with relative syzygy the complex:
\begin{equation*}
\Omega ^{i}(P.):0\rightarrow (-,A_{n})\rightarrow
(-,A_{n-1})\rightarrow (-,A_{n-2})\cdots (-,A_{i+2})\rightarrow (-,A_{i+1})\rightarrow 0
\end{equation*}%
for $n-1\geq i\geq 0$.

Therefore: we have an exact sequence
\[
 0\rightarrow \varTheta(\Omega
(P.))\rightarrow \varTheta(\mathbf{Q}_{0})\rightarrow \varTheta%
(P.)\rightarrow 0
\]
in $\mathrm{mod}(\mathcal{C})$. Since $\varTheta(\mathbf{Q%
}_{i})=(-,A_{i})$ and $\varTheta(P.)=M$, we have $\Omega (\varTheta(P.))=%
\varTheta(\Omega (P.))$, and we can prove by induction that $\Omega ^{i}(%
\varTheta(P.))=\varTheta(\Omega ^{i}P.)$, for all $i\geq 0$. It follows $%
\mathrm{rpdim}P.\geq \mathrm{pdim}\varTheta(P.).$

Conversely, applying $\varTheta$ to a relative projective resolution

\begin{equation*}
0\rightarrow \mathbf{Q}_{n}\rightarrow \mathbf{Q}_{n-1}\rightarrow \cdots
\rightarrow \mathbf{Q}_{1}\rightarrow \mathbf{Q}_{0}\rightarrow
P.\rightarrow 0,
\end{equation*}

we obtain a projective resolution of $\varTheta(P.)$

\begin{equation*}
0\rightarrow \varTheta(\mathbf{Q}_{n})\rightarrow \varTheta(\mathbf{Q}%
_{n-1})\rightarrow \cdots \rightarrow \varTheta(\mathbf{Q}_{1})\rightarrow %
\varTheta(\mathbf{Q}_{0})\rightarrow \varTheta (P.)\rightarrow 0.
\end{equation*}

It follows $\mathrm{rpdim}P.\leq \mathrm{pdim}\varTheta(P.).$
\end{proof}

As a corollary we have:

\begin{corollary}
Let $\mathcal{C}$ a dualizing Krull-Schmidt variety. If $P.$ and $Q$ are  are complexes
in $P^{0}(\mathrm{mod}(\mathcal{C}))$ without summands of the form
(\ref{relativeproj}), then there is an isomorphism
\begin{equation*}
\mathrm{Ext}_{C^{-}(\mathrm{mod}(\mathcal{C}))}^{k}(P.,Q.)=\mathrm{Ext}_{%
\mathrm{mod}(\mathcal{C})}^{k}(\varTheta(P.),\varTheta(Q.))
\end{equation*}
\end{corollary}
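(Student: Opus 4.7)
The plan is to compute both sides via a single relative projective resolution of $P.$ in $P^{0}(\mathcal{A})$ and transport it under $\varTheta$ to a projective resolution of $\varTheta(P.)$ in $\mathrm{mod}(\mathcal{C})$. I would fix a relative projective resolution
\[
0\longrightarrow \mathbf{Q}_{n}\longrightarrow\cdots\longrightarrow \mathbf{Q}_{0}\longrightarrow P.\longrightarrow 0
\]
in $(P^{0}(\mathcal{A}),\mathcal{S}_{P^{0}(\mathcal{A})})$, each $\mathbf{Q}_{i}$ a summand of a complex of the form (\ref{relativeproj}). By Proposition \ref{omegaP}, $\varTheta(\mathbf{Q}_{\bullet})\to\varTheta(P.)$ is a projective resolution in $\mathrm{mod}(\mathcal{C})$, so the right-hand side of the claimed equality equals $H^{k}(\mathrm{Hom}_{\mathrm{mod}(\mathcal{C})}(\varTheta(\mathbf{Q}_{\bullet}),\varTheta(Q.)))$, and the left-hand side, being the relative Ext in the exact category $P^{0}(\mathcal{A})$, equals $H^{k}(\mathrm{Hom}_{C^{-}(\mathcal{A})}(\mathbf{Q}_{\bullet},Q.))$.

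Next I would compare these two cochain complexes term-by-term via the equivalence $\underline{\varTheta}:R^{0}(\mathcal{A})\to\mathrm{mod}(\mathcal{C})$, which supplies a canonical identification
\[
\mathrm{Hom}_{\mathrm{mod}(\mathcal{C})}(\varTheta(\mathbf{Q}_{i}),\varTheta(Q.)) \;\cong\; \mathrm{Hom}_{R^{0}(\mathcal{A})}(\mathbf{Q}_{i},Q.) = \mathrm{Hom}_{C^{-}(\mathcal{A})}(\mathbf{Q}_{i},Q.)/\text{null-homotopies},
\]
where the null-homotopies are exactly the maps factoring through a contractible complex of the long form ending in $P_{0}$. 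The two cochain complexes thus differ by the sub-cochain of null-homotopic maps, and the crux is to show that passing to this quotient does not alter cohomology. This is where the hypothesis that $P.$ and $Q.$ have no summands of form (\ref{relativeproj}) enters.

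The cleanest way to exploit the hypothesis is by induction on $k$. For the base case $k=0$, Krull--Schmidt combined with the no-summands assumption on $P.$ and $Q.$ forces every null-homotopic chain-map $P.\to Q.$ to vanish, yielding $\mathrm{Hom}_{C^{-}(\mathcal{A})}(P.,Q.)\cong\mathrm{Hom}_{\mathrm{mod}(\mathcal{C})}(\varTheta(P.),\varTheta(Q.))$. For the inductive step, the identity $\varTheta(\Omega^{i}P.)=\Omega^{i}\varTheta(P.)$ supplied by Proposition \ref{omegaP} permits a dimension shift, reducing the statement for $(P.,Q.)$ in degree $k$ to the statement for $(\Omega P.,Q.)$ in degree $k-1$; one also checks that $\Omega P.$ can be arranged without summands of form (\ref{relativeproj}) by absorbing any such summand into $\mathbf{Q}_{0}$. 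The main obstacle I foresee is the base case: explicitly unpacking the null-homotopy ideal and using Krull--Schmidt to see that the no-summand assumption is precisely what rules out non-trivial null-homotopic chain-maps. Once this is in hand, the dimension-shift propagation via Proposition \ref{omegaP} is routine and yields the claimed isomorphism for all $k\geq 0$.
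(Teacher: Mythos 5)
Your setup---computing both sides from a relative projective resolution $\mathbf{Q}_{\bullet}\rightarrow P.$ and using Proposition \ref{omegaP} to dimension-shift---matches the paper's reduction, but the argument breaks at the base case, and the break is fatal. It is not true that the absence of summands of the form (\ref{relativeproj}) forces null-homotopic chain maps $P.\rightarrow Q.$ to vanish. Writing $Q.=\cdots\rightarrow(-,B_{1})\xrightarrow{(-,g_{1})}(-,B_{0})\rightarrow 0$, any morphism $s_{0}:A_{0}\rightarrow B_{1}$ with $g_{1}s_{0}\neq 0$ extends to a nonzero chain map $P.\rightarrow Q.$ that is killed by $\underline{\varTheta}$; such $s_{0}$ exist whenever $(\mathrm{Im}(-,g_{1}))(A_{0})\neq 0$, which is the generic situation (e.g.\ already for $P.=Q.$ a minimal resolution of a non-projective functor). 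Minimality of $Q.$ only guarantees that $(-,B_{0})\rightarrow\varTheta(Q.)$ is a projective cover; it does not annihilate the homotopy ideal. Consequently $\mathrm{Hom}_{C^{-}(\mathcal{A})}(P.,Q.)$ is in general a proper extension of $\mathrm{Hom}_{\mathrm{mod}(\mathcal{C})}(\varTheta(P.),\varTheta(Q.))$ by the null-homotopic maps, the stated isomorphism fails for $k=0$, and an induction anchored there cannot start: the corollary is genuinely a statement about $k\geq 1$.

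Moreover, dimension shifting via $\varTheta(\Omega^{i}P.)=\Omega^{i}\varTheta(P.)$ only reduces degree $k$ to degree $k-1$ down to $k=1$; the passage from $\mathrm{Ext}^{1}$ to $\mathrm{Hom}$ is not a dimension shift, since $\mathrm{Ext}^{1}(P.,Q.)$ is a quotient of $\mathrm{Hom}(\Omega P.,Q.)$ rather than an instance of the $k=0$ case. So the irreducible content is the degree-one statement, and that is precisely where the no-summand hypothesis must be spent. The paper handles it directly: an $\mathcal{S}$-exact sequence $0\rightarrow Q.\rightarrow E.\rightarrow P.\rightarrow 0$ maps under $\varTheta$ to an extension of $\varTheta(P.)$ by $\varTheta(Q.)$; if that image splits, the retraction lifts to a chain map $E.\rightarrow Q.$, and because $(-,B_{0})\rightarrow\varTheta(Q.)$ is a projective cover (here minimality of $Q.$ is used) the composite $Q.\rightarrow E.\rightarrow Q.$ is an isomorphism, so the original sequence splits---giving injectivity of $\mathrm{Ext}^{1}_{C^{-}}(P.,Q.)\rightarrow\mathrm{Ext}^{1}(\varTheta(P.),\varTheta(Q.))$---while surjectivity follows from the Horseshoe Lemma applied to minimal resolutions. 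Your proposal contains no substitute for either half of this degree-one argument, and it is the part that cannot be skipped.
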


\begin{proof}
By Proposition \ref{omegaP}, we see that $\varTheta(\Omega ^{i}P.)=\Omega
^{i}(\varTheta(P.))$, $i\geq 0$. It is enough to prove the corollary for $%
k=1 $. Assume that (*) $0\rightarrow Q.\xrightarrow {(-,j_i)}E.%
\xrightarrow{(-,p_i)}P.\rightarrow 0$ is a exact sequence in $\mathrm{Ext}%
_{C^{-}(\mathrm{mod}(\mathcal{C})}^{k}(P.,Q.)$, with $Q.=\cdots \rightarrow
(-,B_{2})\xrightarrow{(-,g_2)}(-,B_{1})\xrightarrow{(-,g_1)}%
(-,B_{0})\rightarrow 0$, $P.=\cdots \rightarrow (-,A_{2})%
\xrightarrow{(-,f_2)}(-,A_{1})\xrightarrow{(-,f_1)}(-,A_{0})\rightarrow 0$, $%
E.=\cdots \rightarrow (-,E_{2})\xrightarrow{(-,h_2)}(-,E_{1})%
\xrightarrow{(-,h_1)}(-,E_{0})\rightarrow 0$. Since the exact sequence $0\rightarrow (-,B_{i})%
\xrightarrow {(-,j_i)}(-,E_{i})\xrightarrow{(-,p_i)}(-,A_{i})\rightarrow 0$
splits $E_{i}=A_{i}\coprod B_{i}$, $i\geq 0$. Then we have an exact sequence
in $\mathrm{mod}(\mathcal{C})$
\begin{equation}
0\rightarrow \varTheta(Q.)\xrightarrow{\rho}\varTheta(E.)\xrightarrow{\sigma}%
\varTheta(P.)\rightarrow 0  \label{splits}
\end{equation}%
If (\ref{splits}) splits, then there exist a map $\delta :\varTheta%
(E.)\rightarrow \varTheta(Q.)$ such that $\delta \rho =1_{\varTheta(Q.)}$,
We have a lifting of $\delta $, ${(-,l_{i})}_{i\in \mathbb{Z}}:E.\rightarrow
Q.$ such that the following diagram is commutative
\begin{equation*}
\begin{diagram} \node{\cdots}\arrow{e}
\node{(-,B_1)}\arrow{e,t}{(-,g_1)}\arrow{s,l}{(-,l_1j_i)}
\node{(-,B_0)}\arrow{e,t}{\pi}\arrow{s,l}{(-,l_0j_0)}
\node{\varTheta(Q.)}\arrow{e}\arrow{s,-,=} \node{0}\\ \node{\cdots}\arrow{e}
\node{(-,B_1)}\arrow{e,t}{(-,g_1)} \node{(-,B_0)}\arrow{e,t}{\pi}
\node{\varTheta(Q.)}\arrow{e} \node{0} \end{diagram}
\end{equation*}%
The complex $Q.$ has not summand of the form (\ref{relativeproj}), hence, $%
Q.$ is a minimal projective resolution of $\varTheta(Q.)$.

Since $\pi :(-,B_{0})\rightarrow \varTheta(Q.)$ is a projective cover, the
map $(-,l_{0}j_{0}):(-,B_{0})\rightarrow (-,B_{0})$ is an isomorphism, and
it follows by induction that all maps $(-,l_{i}j_{i})$ are isomorphisms,
which implies that the map $\{(-,j_{i})\}_{i\in\mathbb{Z}}:Q.\rightarrow E.$ is a splitting
homomorphism of complexes.

Given an exact sequence (**) $0\rightarrow G\rightarrow H\rightarrow
F\rightarrow 0$, in $\mathrm{mod}(\mathcal{C})$, we take minimal projective
resolutions $P.$ and $Q.$ of $F$ and $G,$ respectively, by the Horseshoe's Lemma,
 we have a projective resolution $E.$ for $H$, with $%
E_{i}=Q_{i}\oplus P_{i}$, and $0\rightarrow \varTheta(Q.)\rightarrow %
\varTheta(E.)\rightarrow \varTheta(P.)\rightarrow 0$ is a exact sequence in $%
\mathrm{mod}(\mathcal{C})$ isomorphic to (**).
\end{proof}

\subsection{Relative Tilting in $\mathrm{maps}(\mathcal{C})$}

Let $\mathcal{C}$ a dualizing Krull-Schmidt variety. In order to define an
exact structure on $\mathrm{maps}(\mathcal{C})$ we proceed as follows: we
identify first $\mathcal{C}$ with the category $\mathfrak{p(}\mathcal{C})$
of projective objects of $\mathcal{A}=\mathrm{mod}(\mathcal{C})$, in this
way $\mathrm{maps}(\mathcal{C})$ is equivalent to $\mathrm{maps}(\mathfrak{p(%
}\mathcal{C}))$ which is embedded in the abelian category $\mathcal{B}=$ $%
\mathrm{maps}(\mathcal{A}).$ We can define an exact structure ($\mathrm{maps}%
(\mathcal{C}),\mathcal{S})$ giving a subfunctor $F$of $\mathrm{Ext}_{\mathcal{B}%
}^{1}(-,?)$. Let $\varPsi:P^{0}(\mathcal{A})\rightarrow \mathrm{maps}(%
\mathcal{C})$ be the functor given above and $\alpha :$ $\mathrm{maps}(%
\mathcal{C})$ $\rightarrow $ $\mathrm{maps}(\mathfrak{p(}\mathcal{C}))$ the
natural equivalence. Since $\varPsi$ is dense any object in $\mathrm{maps}(%
\mathcal{C})$ is of the form $\varPsi(P.)$ and we define $\mathrm{Ext}_{F}^{1}$($\alpha %
\varPsi(P.)$ , $\alpha \varPsi(Q.))$ as $\alpha \varPsi(\mathrm{Ext}_{C^{-}(%
\mathcal{A})}^{1}(P.,Q.))$. We obtain the exact structure on $\mathrm{maps}(%
\mathcal{C})$ using the identification $\alpha .$

\bigskip Once we have the exact structure on $\mathrm{maps}(\mathcal{C})$
the definition of a relative tilting subcategory $\mathcal{T}_{\mathcal{C}}$
of $\mathrm{maps}(\mathcal{C})$ is very natural, it will be equivalent to
the following:

\begin{definition}
A relative tilting category in the category of maps, $\mathrm{maps}(\mathcal{%
C})$, is a subcategory $\mathcal{T}_{\mathcal{C}}$ such that :

\begin{itemize}
\item[(i)] Given $T:T_{1}\rightarrow T_{0}$ in $\mathcal{T}_{\mathcal{C}}$ ,
and $P.\in P^{0}(\mathcal{C})$ such that $\varPsi(P.)=T$, there exist an
integer $n$ such that $\mathrm{rpdim}P.\leq n$.

\item[(ii)] Given $T:T_{1}\rightarrow T_{0}$, $T^{\prime }:T_{1}^{\prime
}\rightarrow T_{0}^{\prime }$ in $\mathcal{T}_{\mathcal{C}}$ and $\varPsi%
(P.)=T$, $\varPsi(Q.)=T^{\prime }$, $P.,Q.\in P^{0}(\mathrm{mod}(\mathcal{C}%
))$. Then $\mathrm{Ext}_{C^{-}(\mathrm{mod}(\mathcal{C}))}^{k}(P.,Q.)=0$ for
all $k\geq 1$.

\item[(iii)] Given an object $C$ in $\mathcal{C}$, denote by $(-,C)_{\circ }$
the complex $0\rightarrow (-,C)\rightarrow 0$ concentrated in degree zero.
Then there exists an exact sequence
\begin{equation*}
0\rightarrow (-,C)_{\circ }\rightarrow P_{0}\rightarrow P_{1}\rightarrow
\cdots \rightarrow P_{n}\rightarrow 0
\end{equation*}%
with $P_{i}\in P^{0}(\mathrm{mod}(\mathcal{C}))$ and $\varPsi(P_{i})\in
\mathcal{T}_{\mathcal{C}}$.
\end{itemize}
\end{definition}

By definition, the following is clear

\begin{theorem}
\label{maps2} Let $\varPhi:\mathrm{maps}(\mathcal{C})\rightarrow \mathrm{mod}%
(\mathcal{C})$ be functor above, $\mathcal{T}_{\mathcal{C}}$ is a relative
tilting subcategory of $\mathrm{maps}(\mathcal{C})$ if and only if $\varPhi(%
\mathcal{T}_{\mathcal{C}})$ is a tilting subcategory of $\mathrm{mod}(%
\mathcal{C})$
\end{theorem}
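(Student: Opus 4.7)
The plan is to verify each of the three defining conditions of a relative tilting subcategory of $\mathrm{maps}(\mathcal{C})$ against the three defining conditions of a (generalized) tilting subcategory of $\mathrm{mod}(\mathcal{C})$ by transporting them through the equivalence $\underline{\varTheta}=\underline{\varPhi}\,\underline{\varPsi}:R^{0}(\mathcal{A})\to \mathrm{mod}(\mathcal{C})$. Since the exact structure $(\mathrm{maps}(\mathcal{C}),\mathcal{S})$ was defined precisely by declaring $\mathrm{Ext}^{1}_{F}(\alpha\varPsi(P.),\alpha\varPsi(Q.))=\alpha\varPsi(\mathrm{Ext}^{1}_{C^{-}(\mathcal{A})}(P.,Q.))$, it is enough to work on the model side $P^{0}(\mathcal{A})$ and then transfer everything to $\mathrm{mod}(\mathcal{C})$ via $\varTheta$. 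The proof will therefore be a chain of equivalences of the three bulleted conditions, obtained by combining Proposition~\ref{omegaP} with the preceding Corollary on the comparison of Ext groups.

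Concretely, for condition (i), I would observe that Proposition~\ref{omegaP} gives the equality $\mathrm{rpdim}P.=\mathrm{pdim}\,\varTheta(P.)$ for every $P.\in P^{0}(\mathcal{A})$. Hence the uniform bound $\mathrm{rpdim}P.\leq n$ on all $T\in\mathcal{T}_{\mathcal{C}}$ is exactly the statement that $\varPhi(\mathcal{T}_{\mathcal{C}})$ has uniformly bounded projective dimension in $\mathrm{mod}(\mathcal{C})$. For condition (ii), the Corollary preceding the subsection states exactly
\[
\mathrm{Ext}^{k}_{C^{-}(\mathrm{mod}(\mathcal{C}))}(P.,Q.)\cong \mathrm{Ext}^{k}_{\mathrm{mod}(\mathcal{C})}(\varTheta(P.),\varTheta(Q.))
\]
for $P.,Q.$ without trivial summands, so the vanishing of the left-hand side for all $k\geq 1$ on $\mathcal{T}_{\mathcal{C}}$ is equivalent to the Ext-orthogonality condition defining a tilting subcategory on $\varPhi(\mathcal{T}_{\mathcal{C}})$.

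For condition (iii), the crucial identification is $\varTheta((-,C)_{\circ})=(-,C)$, since $(-,C)_{\circ}$ is the complex with $(-,C)$ in degree zero, whose cokernel as a presentation is the representable functor $(-,C)$. Applying $\varTheta$ termwise to an $F$-exact coresolution
\[
0\to (-,C)_{\circ}\to P_{0}\to P_{1}\to\cdots\to P_{n}\to 0
\]
in $P^{0}(\mathrm{mod}(\mathcal{C}))$ with $\varPsi(P_{i})\in\mathcal{T}_{\mathcal{C}}$ yields an exact sequence
\[
0\to (-,C)\to \varTheta(P_{0})\to\cdots\to \varTheta(P_{n})\to 0
\]
in $\mathrm{mod}(\mathcal{C})$ with $\varTheta(P_{i})=\varPhi\varPsi(P_{i})\in \varPhi(\mathcal{T}_{\mathcal{C}})$; this is the coresolution of the projective $(-,C)$ demanded by the definition of a tilting subcategory. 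Conversely, any such coresolution in $\mathrm{mod}(\mathcal{C})$ lifts through the equivalence $\varTheta$ to an $F$-exact coresolution in $P^{0}(\mathrm{mod}(\mathcal{C}))$ using the Horseshoe construction already employed in the proof of the comparison Corollary.

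The only step requiring any care is condition (iii): one must check that the resolution provided on one side genuinely transports to an $F$-exact (respectively, honest exact) sequence on the other. For the forward direction this is automatic because short exact sequences in $P^{0}(\mathcal{A})$ are degreewise split and $\varTheta$ sends them to short exact sequences in $\mathrm{mod}(\mathcal{C})$, as seen in the proof of the comparison Corollary; for the converse direction one iteratively applies the Horseshoe Lemma to lift the coresolution of $(-,C)$ degree by degree, obtaining an $F$-exact coresolution of $(-,C)_{\circ}$ whose objects are forced to lie in $\varPsi^{-1}(\mathcal{T}_{\mathcal{C}})$ modulo homotopy by minimality (the argument used in the comparison Corollary via projective covers). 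Once this is in place, the three equivalences combine to give the theorem.
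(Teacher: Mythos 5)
Your proposal is correct and follows exactly the route the paper intends: the paper dismisses this theorem with ``By definition, the following is clear,'' precisely because the definition of a relative tilting subcategory of $\mathrm{maps}(\mathcal{C})$ was engineered so that its three conditions translate, via Proposition~2.5 ($\mathrm{rpdim}\,P.=\mathrm{pdim}\,\varTheta(P.)$), the Ext-comparison Corollary, and the identification $\varTheta((-,C)_{\circ})=(-,C)$, into the three conditions defining a tilting subcategory of $\mathrm{mod}(\mathcal{C})$. You have simply written out the details the authors left implicit, using the same lemmas they set up for this purpose.
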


\section{The Algebra of Triangular Matrices}

Let $\Lambda$ be an artin algebra. We want to explore the connections
between $\mathrm{mod}\ \Gamma $, with $\Gamma =\left(
\begin{array}{cc}
\Lambda & 0 \\
\Lambda & \Lambda%
\end{array}%
\right) $ and the category $\mathrm{mod}(\mathrm{mod}\Lambda )$. In
particular we want to compare the Auslander-Reiten quivers and subcategories
which are tilting, contravariantly, covariantly and functorially finite. We
identify $\mathrm{mod}\;\Gamma $ with the category of $\Lambda $-maps, $%
\mathrm{maps}(\Lambda )$ [see 7 Prop. 2.2]. We refer to the book by
Fossum, Griffits and Reiten [13] or to [16] for properties of modules over
triangular matrix rings.

\subsection{Almost Split Sequences}

In this subsection we want to study the relation between the almost split
sequences in $\mathrm{mod}\ \Gamma $ and almost split sequences in $\mathrm{%
mod}(\mathrm{mod}\Lambda ).$We will see that except for a few special
objects in $\mathrm{mod}\ \Gamma $, the almost split sequences will belong
to the class $\mathcal{S}$ of the exact structure, so in particular will be
relative almost split sequences.

For any indecomposable non projective $\Gamma $-module $M=(M_{1},M_{2},f)$
we can compute $DtrM$ as follows:

To construct a minimal projective resolution of $M$ ([13], [16]), let $P_{1}%
\xrightarrow{p_1}P_{0}\rightarrow M_{1}\rightarrow 0$ be a minimal
projective presentation. Taking the cokernel, we have an exact sequence $%
M_{1}\xrightarrow{f}M_{2}\xrightarrow{f_2}M_{3}\rightarrow 0$, and a
commutative diagram
\begin{equation*}
\begin{diagram}\dgARROWLENGTH=.5em \node{0}\arrow{e} \node{P_1}\arrow{e,t}{}
\arrow{s,l}{p_1} \node{P_1\oplus Q_1}\arrow{s,l}{}\arrow{e}
\node{Q_1}\arrow{e}\arrow{s,l}{q_1} \node{0}\\ \node{0}\arrow{e}
\node{P_0}\arrow{e,t}{} \arrow{s,l}{p_0} \node{P_0\oplus
Q_0}\arrow{s,l}{}\arrow{e} \node{Q_0}\arrow{e}\arrow{s,l}{q_0} \node{0}\\
\node{} \node{M_1}\arrow{e,t}{f}\arrow{s}
\node{M_2}\arrow{e,t}{f_2}\arrow{s} \node{M_3}\arrow{e}\arrow{s} \node{0}\\
\node{} \node{0} \node{0} \node{0} \end{diagram}
\end{equation*}%
with $Q_{0}$ the projective cover of $M_{3}.$The presentation can be written
as:

\begin{center}
$\bigskip $%
\begin{equation*}
\left(
\begin{array}{c}
P_{1} \\
P_{1}\oplus Q_{1}%
\end{array}%
\right) \rightarrow \left(
\begin{array}{c}
P_{0} \\
P_{0}\oplus Q_{0}%
\end{array}%
\right) \rightarrow \left(
\begin{array}{c}
M_{1} \\
M_{2}%
\end{array}%
\right) \rightarrow 0
\end{equation*}
\end{center}

and $trM$ will look as as follows:

\begin{equation*}
\left(
\begin{array}{c}
P_{0}^{\ast }\oplus Q_{0}^{\ast } \\
Q_{0}^{\ast }%
\end{array}%
\right) \rightarrow \left(
\begin{array}{c}
P_{1}^{\ast }\oplus Q_{1}^{\ast } \\
Q_{1}^{\ast }%
\end{array}%
\right) \rightarrow tr\left(
\begin{array}{c}
M_{1} \\
M_{2}%
\end{array}%
\right) \rightarrow 0
\end{equation*}

which corresponds to the commutative exact diagram:
\begin{equation*}
\begin{diagram}\dgARROWLENGTH=.5em \node{0}\arrow{e}
\node{Q_0^{\ast}}\arrow{e,t}{} \arrow{s,l}{q_1^{\ast}}
\node{Q_0^{\ast}\oplus P_0^{\ast}}\arrow{s,l}{}\arrow{e}
\node{P_0^{\ast}}\arrow{e}\arrow{s,l}{p_1^{\ast}} \node{0}\\
\node{0}\arrow{e} \node{Q_1^{\ast}}\arrow{e,t}{} \arrow{s,l}{}
\node{Q_1^{\ast}\oplus P_1^{\ast}}\arrow{s,l}{}\arrow{e}
\node{P_1^{\ast}}\arrow{e}\arrow{s} \node{0}\\ \node{} \node{tr M_3\oplus
Q^{\ast}}\arrow{e,t}{}\arrow{s} \node{tr M_2\oplus
P^{\ast}}\arrow{e,t}{}\arrow{s} \node{tr M_1}\arrow{e}\arrow{s} \node{0}\\
\node{} \node{0} \node{0} \node{0} \end{diagram}
\end{equation*}

\bigskip with $Q^{\ast }$, $P^{\ast }$, projectives coming from the fact
that the presentations of $M_{2}$ and $M_{3}$ in the first diagram are not
necessary minimal.

Then $\tau M$ is obtained as $\tau (M_{1},M_{2},f)=\tau M_{2}\oplus
D(P^{\ast })\rightarrow \tau M_{3}\oplus D(Q^{\ast })$, with kernel $%
0\rightarrow \tau M\rightarrow \tau M_{2}\oplus D(P^{\ast })\rightarrow \tau
M_{3}\oplus D(Q^{\ast })$.

We consider now the special cases of indecomposable $\Gamma $-modules of the
form: $M\xrightarrow{1_M}M$, $(M,0,0)$, $(0,M,0)$, with $M$ a non projective
indecomposable $\Lambda $-module.

\begin{proposition}
Let  $0\rightarrow \tau M\xrightarrow{j}E\xrightarrow{\pi}M\rightarrow 0$  be
an  almost   split sequence of $\Lambda $-modules.

\begin{itemize}
\item[(a)] Then the exact sequences of $\Gamma $-modules:

\begin{itemize}
\item[(1)] $0\rightarrow (\tau M,0,0)\xrightarrow{(j\ 0)}(E,M,\pi )%
\xrightarrow{(\pi\ 0)}(M,M,1_{M})\rightarrow 0$,

\item[(2)] $0\rightarrow (\tau M,\tau M,1_{\tau M})%
\xrightarrow {(1_{\tau
M}\ j)}(\tau M,E,j)\xrightarrow {(0\ \pi)}(0,M,0)\rightarrow 0$,
\end{itemize}

are almost split.

\item[(b)] Given a minimal projective resolution $P_{1}\xrightarrow{p_1}P_{0}%
\xrightarrow {p_0}M\rightarrow 0$, we obtain a commutative diagram:
\begin{equation}
\begin{diagram}\dgARROWLENGTH=1.5em \node{0}\arrow{e} \node{\tau
M}\arrow{e,t}{j}\arrow{s,=,-}
\node{E}\arrow{e,t}{\pi}\arrow{s,l}{\overline{t}}
\node{M}\arrow{e}\arrow{s,l}{t} \node{0}\\ \node{0}\arrow{e} \node{\tau
M}\arrow{e,t}{u} \node{D(P_1^{\ast})}\arrow{e,t}{D(p_1^{\ast})}
\node{D(P_0^{\ast})}\arrow{e,!} \node{} \end{diagram}  \label{almostspecial}
\end{equation}

Then the exact sequence
\[
0\rightarrow (N_{1},N_{2},g)\xrightarrow {(j_2\ j_1)}(E_{1},E_{2},h)%
\xrightarrow{(\pi_1\ \pi_2)}(M_{1},M_{2},f)\rightarrow 0\text{,}
\]
with  $(N_{1},N_{2},g)=(D(P_{1}^{\ast }),D(P_{0}^{\ast }),D(p_{1}^{\ast }))$,
 \ \ \ $(M_{1},M_{2},f)=(M,0,0)$, $(E_{1},E_{2},h)=(D(P_{1}^{\ast })\oplus M,D(P_{0}^{\ast
}),(D(p_{1}^{\ast })\ t))$  and \ \ $(j_2\ j_1)=(\left( ^1_0\right)\ 1)$,
$(\pi_1\ \pi_2)=(0\ -1)$,
is an almost split sequence.
\end{itemize}
\end{proposition}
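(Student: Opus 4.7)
The plan is to verify, in each case, the standard criterion for an almost split sequence: the end terms are indecomposable and non-projective, the sequence is exact and non-split, and every non-split epimorphism into the right term factors through the middle term. Throughout, I would exploit componentwise exactness under the equivalence $\mathrm{mod}\,\Gamma\cong\mathrm{maps}(\Lambda)$, together with the $\Lambda$-almost-split property of $0\to\tau M\xrightarrow{j}E\xrightarrow{\pi}M\to 0$.

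For part (a), I first check that $(M,M,1_M)$ and $(0,M,0)$ are indecomposable non-projective: their endomorphism rings are both isomorphic to $\mathrm{End}_\Lambda(M)$ (in the first case, the commutativity $h_2\cdot 1_M=1_M\cdot h_1$ forces $h_1=h_2$; in the second, only $h_2$ carries information), hence local, and neither has the form $(P,P,1_P)$ or $(0,P,0)$ with $P$ projective. Exactness of (1) and (2) is checked componentwise and reduces to exactness of the $\Lambda$-AR sequence, and a hypothetical $\Gamma$-section would give a $\Lambda$-section of $\pi$. For the lifting axiom in (1), given a non-split epi $(h_1,h_2):(X_1,X_2,g)\to(M,M,1_M)$, I argue $h_1$ itself is not a $\Lambda$-split epi: any $\Lambda$-section $s$ of $h_1$ produces a $\Gamma$-section $(s,gs)$, using $h_2(gs)=h_1s=1_M$ (from $h_1=h_2g$). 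The $\Lambda$-AR property then supplies $k_1:X_1\to E$ with $\pi k_1=h_1$, and $(k_1,h_2)$ is a lift. For (2), a non-split epi $(0,h_2):(X_1,X_2,g)\to(0,M,0)$ similarly forces $h_2$ to be non-$\Lambda$-split, giving $k_2:X_2\to E$ with $\pi k_2=h_2$; then $\pi(k_2g)=h_2g=0$ places $k_2g$ in $\mathrm{im}\,j$, uniquely defining $k_1:X_1\to\tau M$ with $jk_1=k_2g$, and $(k_1,k_2)$ is the lift.

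For part (b), $(M,0,0)$ is indecomposable non-projective by the same arguments. The substantive step is computing $\tau(M,0,0)$, for which the general formula preceding the proposition degenerates since $M_2=M_3=0$. I therefore compute a minimal projective resolution of $(M,0,0)$ directly: the projective cover is $(P_0,P_0,1_{P_0})\twoheadrightarrow(M,0,0)$ via $(p_0,0)$, with kernel $(K,P_0,\mathrm{incl})$ where $K=\ker p_0$; the top of this kernel is $(\mathrm{top}\,K,\mathrm{top}\,M,0)$, so the next term is $Q_1=(P_1,P_1,1_{P_1})\oplus(0,P_0,0)$. Applying $\mathrm{Hom}_\Gamma(-,\Gamma)$ and then the duality $D$ — tracking the shapes of $(P,P,1_P)^*$ and $(0,P,0)^*$ as right $\Gamma$-modules — yields $\tau(M,0,0)=(D(P_1^*),D(P_0^*),D(p_1^*))$, precisely the $(N_1,N_2,g)$ of the statement. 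Uniqueness of the almost split sequence then determines the middle term up to isomorphism, and the candidate $(D(P_1^*)\oplus M,D(P_0^*),(D(p_1^*),t))$ with $t$ from diagram (\ref{almostspecial}) realizes it: exactness is componentwise (the first column is the split sequence $0\to D(P_1^*)\to D(P_1^*)\oplus M\to M\to 0$, the second is $0\to D(P_0^*)=D(P_0^*)\to 0\to 0$), commutativity of the structure maps is exactly what (\ref{almostspecial}) encodes, and a $\Gamma$-section would yield a $\Lambda$-section of $\pi$ via the top row of the same diagram.

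The main obstacle is the degenerate $\tau$-computation in part (b): the general formula before the proposition assumes nontrivial presentations for $M_2$ and $M_3$, but both vanish for $(M,0,0)$, so one must redo the minimal projective resolution from scratch, carefully identifying the extra summand $(0,P_0,0)$ needed to cover the second component of the kernel, and track the duality carefully to recover the precise shape of $\tau(M,0,0)$. Part (a), by contrast, is covered by the general formula and confirms $\tau(M,M,1_M)=(\tau M,0,0)$ and $\tau(0,M,0)=(\tau M,\tau M,1_{\tau M})$, matching the left ends of sequences (1) and (2).
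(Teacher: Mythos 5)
Your part (a) is sound and follows essentially the paper's own route: the same section-transport trick (a section $s$ of $h_1$ yields the $\Gamma$-section $(s,gs)$), the same use of the right-almost-split property of $\pi$ to produce the lift, and your treatment of sequence (2) is in fact a more complete direct verification than the paper's (which only lifts endomorphisms of $(0,M,0)$). Your computation of the minimal projective presentation of $(M,0,0)$ in part (b) --- projective cover $(P_0,P_0,1_{P_0})$, kernel $(\Omega M, P_0,\mathrm{incl})$, second term $(P_1,P_1,1_{P_1})\oplus(0,P_0,0)$, then transpose and dualize --- is exactly what the paper's two diagrams in (b) encode, so the identification $\tau(M,0,0)=(D(P_1^{\ast}),D(P_0^{\ast}),D(p_1^{\ast}))$ is fine.

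The gap is in the last step of (b). You conclude that the candidate sequence is almost split from: (i) its left end is $\tau(M,0,0)$, (ii) it is exact with the stated components, and (iii) it does not split, invoking ``uniqueness of the almost split sequence.'' That inference is invalid: a non-split extension of $C$ by $\tau C$ with $C$ indecomposable need not be almost split. By Auslander--Reiten duality $\mathrm{Ext}^1_\Gamma((M,0,0),\tau(M,0,0))\cong D\overline{\mathrm{End}}(M,0,0)\cong D\overline{\mathrm{End}}_\Lambda(M)$, and the almost split sequences are exactly the nonzero elements of the socle of this $\mathrm{End}$-module; when $\overline{\mathrm{End}}_\Lambda(M)$ is not a division ring there are non-split extensions with the correct end terms that are not almost split. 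Uniqueness only says the almost split sequence is unique, not that every non-split sequence with those ends is it. You must additionally verify a factorization property, which is what the paper does: it shows that every non-isomorphism $(\sigma,0):(M,0,0)\rightarrow(M,0,0)$ factors through the epimorphism, by using that $\pi$ is right almost split to obtain $s:M\rightarrow E$ with $\pi s=\sigma$ and then pushing $s$ into the pullback description of the middle term via $\bigl(\begin{smallmatrix}\overline{t}s\\ -\pi s\end{smallmatrix}\bigr)$. (Equivalently one could check that an arbitrary non-split-epi map $(X_1,X_2,g)\rightarrow(M,0,0)$ lifts, in the style of your argument for (a)(2).) Without some such verification, part (b) is incomplete.
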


\begin{proof}
(a) (1) Since $\pi :E\rightarrow M$ does not splits, the map $(\pi
,1_{M}):(E,M,\pi )\rightarrow (M,M,1_{M})$ does not split. Let $%
(q_{1},q_{2}):(X_{1},X_{2},f)\rightarrow (M,M,1_{M})$ be a map that is not a
splittable epimorphism. Then $q_{2}f=q_{1}1_{M}=q_{1}$.

We claim $q_{1}$ is not a splittable epimorphism. Indeed, if $q_{1}$ is a
splittable epimorphism, then there exists a morphism $s:M\rightarrow X_{1}$,
such that $q_{1}s=1_{M}$ and we have the following commutative diagram:
\begin{equation*}
\begin{diagram}\dgARROWLENGTH=1em \node{M}\arrow{s,l}{s}\arrow{e,t}{1_M}
\node{M}\arrow{s,l}{fs}\\ \node{X_1}\arrow{s,l}{q_1}\arrow{e,t}{f}
\node{X_2}\arrow{s,l}{q_2}\\ \node{M}\arrow{e,t}{1_M} \node{M} \end{diagram}
\end{equation*}%
with $q_{2}fs=q_{1}s=1_{M}$, and $(q_{1},q_{2}):(X_{1},X_{2},f)\rightarrow
(M,M,1_{M})$ is a splittable epimorphism, a contradiction.

Since $\pi :E\rightarrow M$ is a right almost split morphism, there exists a
map $h:X_{1}\rightarrow E$ such that $\pi h=q_{1}$, and $q_{2}f=q_{1}=\pi h$
. We have the following commutative diagram:
\begin{equation*}
\begin{diagram}\dgARROWLENGTH=1em \node{X_1}\arrow{s,l}{h}\arrow{e,t}{f}
\node{X_2}\arrow{s,l}{q_2}\\ \node{E}\arrow{s,l}{\pi}\arrow{e,t}{\pi}
\node{M}\arrow{s,l}{1_M}\\ \node{M}\arrow{e,t}{1_M} \node{M} \end{diagram}
\end{equation*}%
with $(\pi \ 1_{M})(h\ q_{2})=(q_{1}\ q_{2}).$ We get a lifting $(h\
q_{2}):(X_{1},X_{2},f)\rightarrow (E,M,\pi )$ of $(q_{1},q_{2})$. We have
proved $\tau (M,M,1_{M})=(\tau M,0,0)$.

(2) It is clear, $\tau (0,M,0)=(\tau M,\tau M,1_{\tau M})$ and the exact
sequence does not split. Now, let $(0,\rho ):(0,M,0)\rightarrow (0,M,0)$ be
a non isomorphism. Then there exists a map $h:M\rightarrow E$ with $\pi
h=\rho $. We have $(0\ \pi )(0\ h)=(0\ \rho )$.

(b) We have the following commutative diagram:

\begin{equation*}
\begin{diagram}\dgARROWLENGTH=1em
\node{P_1}\arrow{s,r}{p_1}\arrow{e,t}{\left( ^{1_{P_1}}_0\right)}
\node{P_1\oplus P_0}\arrow{s,r}{(p_1\; 1_{P_0})}\arrow{e,t}{(p_1\; 1_{P_0})}
\node{P_0}\\ \node{P_0}\arrow{s,l}{p_0}\arrow{e,t}{1_{P_0}}
\node{P_0}\arrow{s}\\ \node{M}\arrow{e,t}{}\arrow{s} \node{0}\\ \node{0}
\node{} \end{diagram}
\end{equation*}%
which implies the existence of the following commutative diagram:
\begin{equation*}
\begin{diagram}\dgARROWLENGTH=1.5em \node{0}\arrow{s,r}{}\arrow{e,t}{}
\node{P_0^{\ast}}\arrow{s,r}{\left(
^{1_{P_0^{\ast}}}_{p_1^{\ast}}\right)}\arrow{e,t}{1_{P_0^{\ast}}}
\node{P_0^{\ast}}\arrow{s,r}{p_1^{\ast}}\\
\node{P_0^{\ast}}\arrow{s,r}{1_{P_0^{\ast}} }\arrow{e,t}{\left(
^{1_{P_0^{\ast}}}_{p_1^{\ast}}\right)} \node{P_0^{\ast}\oplus
P_1^{\ast}}\arrow{e,t}{(0\;1_{P_1^{\ast}})}\arrow{s,r}{(p_1^{\ast}%
\;-1_{P_1^{\ast}})} \node{P_1^{\ast}}\arrow{s}\\
\node{P_0^{\ast}}\arrow{e,t}{p_1^{\ast}}\arrow{s}
\node{P_1^{\ast}}\arrow{e,t}{}\arrow{s} \node{tr M}\arrow{s}\\ \node{0}
\node{0} \node{0} \end{diagram}
\end{equation*}%
and $Dtr(M,0,0)=(D(P_{1}^{\ast })\xrightarrow{D(p_1^{\ast})}D(P_{0}^{\ast
})) $ Since $j:\tau M\rightarrow E$ is a left almost split map, it extends
to the map $\tau M\rightarrow D(P_{1}^{\ast })$. We have the commutative
diagram (\ref{almostspecial}).

Hence, $E$ is the pullback of the maps $t:M\rightarrow D(P_0^{\ast})$, $%
D(P_1^{\ast})\rightarrow D(P_0^{\ast})$. We have an exact sequence:
\begin{equation*}
0\rightarrow E\xrightarrow{\left(^{\overline{t}}_{-\pi}\right)}%
D(P_1^{\ast})\oplus M\xrightarrow {(D(p_1^{\ast})\;t)} D(P_0^{\ast})
\end{equation*}

from which we built an exact commutative diagram:
\begin{equation*}
\begin{diagram}\dgARROWLENGTH=1.5em \node{\tau
M}\arrow{s,r}{j}\arrow{e,t}{u} \node{D(P_1^{\ast})}\arrow{s,l}{\left(
^{1_{D(P_1^{\ast})}}_0\right)}\arrow{e,t}{D(p_1^{\ast})}
\node{D(P_0^{\ast})}\arrow{s,r}{1_{D(P_0^{\ast})}}\\
\node{E}\arrow{s,l}{\pi}\arrow{e,t}{\left(^{\overline{t}}_{-\pi}\right)}
\node{D(P_1^{\ast})\oplus M}\arrow{s,l}{(0\;
-1_{M})}\arrow{e,t}{(D(p_1^{\ast})\; t)} \node{D(P_0^{\ast})}\arrow{s}\\
\node{M}\arrow{e,t}{1_{M}}\arrow{s} \node{M}\arrow{e,t}{}\arrow{s}
\node{0}\\ \node{0} \node{0} \node{} \end{diagram}
\end{equation*}

We claim that the exact sequence
\begin{equation*}
0\rightarrow (D(P_{1}^{\ast }),D(P_{0}^{\ast }),D(p_{1}^{\ast }))\rightarrow
(D(P_{1}^{\ast })\oplus M,D(P_{0}^{\ast }),(D(p_{1}^{\ast })\;t))\rightarrow
(M,0,0)\rightarrow 0
\end{equation*}%
is an almost split sequence.

We need to prove first that it does not split. Suppose there exists a map $%
\left( _{v}^{\mu }\right) :(M,0,0)\rightarrow (D(P_{1}^{\ast })\oplus
M,D(P_{0}^{\ast }),(D(p_{1}^{\ast })\;t))$ such that $((0\;-1_{M})\;0)(%
\left( _{v}^{\mu }\right) \;0)=(1_{M}\;0)$, then $(D(p_{1}^{\ast
})\;t)\left( _{v}^{\mu }\right) =0$ and $v=-1_{M}$. It follows that there
exists $s:M\rightarrow E$ such that $\left( _{-\pi }^{\overline{t}}\right)
s=\left( _{v}^{\mu }\right) $, therefore $-\pi s=v=-1$, which implies $%
0\rightarrow \tau M\rightarrow E\xrightarrow{\pi}M\rightarrow 0$ splits.

It will be enough to prove that any automorphism $(\sigma
,0):(M,0,0)\rightarrow (M,0,0)$, which is not an isomorphism, lifts to $%
D(P_{1}^{\ast })\oplus M\rightarrow D(P_{0}^{\ast })$. But there exist a map
$s:M\rightarrow E$ with $\pi s=\sigma $. We have a map
\begin{equation*}
(\left( _{-\pi s}^{\overline{t}s}\right) \ 0):(M,0,0)\rightarrow
(D(P_{1}^{\ast })\oplus M,D(P_{0}^{\ast }),(D(p_{1}^{\ast })\;t))
\end{equation*}%
such that $((0\;-1_{M})\;0)(\left( _{-\pi s}^{\overline{t}s}\right) \
0)=(\sigma \;0)$.
\end{proof}

Dually, we consider almost split sequences of the form
\begin{equation*}
0\rightarrow (N_{1},N_{2},g)\xrightarrow {(j_2\ j_1)}(E_{1},E_{2},h)%
\xrightarrow{(\pi_1\ \pi_2)}(M_{1},M_{2},f)\rightarrow 0\text{,}
\end{equation*}%
such that $(N_{1},N_{2},g)$ is one of the following cases $%
(N,N,1_{N}),(N,0,0),(0,N,0)$ , with $N$ a non injective indecomposable $%
\Lambda $-module to have the following:

\begin{proposition}
Let $0\rightarrow N\xrightarrow{j}E\xrightarrow {\pi}\tau^{-1}N\rightarrow 0$
an almost split sequence of $\Lambda$-modules.

\begin{itemize}
\item[(a)] Then the exact sequences of $\Gamma$-modules

\begin{itemize}
\item[(1)] $0\rightarrow (N,N,1_N)\xrightarrow{(1_N\ j)}(\tau M,E,j)%
\xrightarrow{(0\ \pi)}(0,M,0)\rightarrow 0$

\item[(2)] $0\rightarrow (N,0,0)\xrightarrow {(j\ 0)} (E,\tau^{-1}N,\pi)%
\xrightarrow {(\pi \ 1)} (\tau^{-1}N,\tau^{-1}N,1_{\tau^{-1}N})\rightarrow 0$
\end{itemize}

are almost split.

\item[(b)] Given a minimal injective resolution  $0\rightarrow N%
\xrightarrow{q_0}I_{0}\xrightarrow{q_1}I_{1}$ , we obtain a commutative
diagram
\begin{equation*}
\begin{diagram}\dgARROWLENGTH=1.5em \node{}\arrow{e,!}
\node{D(I_0)^{\ast}}\arrow{e,t}{D(q_1)^{\ast}}\arrow{s,l}{v}
\node{D(I_1)^{\ast}}\arrow{e,t}{}\arrow{s,l}{\overline{v}}
\node{\tau^{-1}N}\arrow{e}\arrow{s,=}{} \node{0}\\ \node{0}\arrow{e}
\node{N}\arrow{e,t}{j} \node{E}\arrow{e,t}{\pi} \node{\tau^{-1}N}\arrow{e}
\node{0} \end{diagram}
\end{equation*}%
Then the exact sequence
\[
0\rightarrow (N_{1},N_{2},g)\xrightarrow {(j_2\ j_1)}(E_{1},E_{2},h)%
\xrightarrow{(\pi_1\ \pi_2)}(M_{1},M_{2},f)\rightarrow 0\text{,}
\]
with $(N_{1},N_{2},g)=(0,N,0)$, $(E_{1},E_{2},h)=(D(I_{0})^{\ast
},D(I_{1})^{\ast }\oplus N,\left( _{v}^{D(q_{1})^{\ast }}\right) )$,
$(M_{1},M_{2},f)=(D(I_{0})^{\ast },D(I_{0})^{\ast },D(q_{1})^{\ast
})$ and $(j_2\ j_1)=(0\ \left( ^0_1\right))$, $(\pi_1\ \pi_2)=(1\ (1\ 0))$,
is an almost split sequence.
\end{itemize}
\end{proposition}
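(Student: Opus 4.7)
The plan is to derive this proposition from the preceding one by the standard duality $D\colon \mathrm{mod}\,\Gamma\to\mathrm{mod}\,\Gamma^{op}$, which interchanges the objects of the forms $(M,0,0)$ and $(0,M,0)$ and swaps $\tau$ with $\tau^{-1}$. Since the proof technique is already laid out in the previous proposition, the task amounts to recording the correspondence and verifying the resulting sequence by the same pattern of arguments.

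For part (a), both statements are immediate corollaries of part (a) of the previous proposition. Since $N$ is non-injective indecomposable, the module $M:=\tau^{-1}N$ is non-projective indecomposable, and the given almost split sequence $0\to N\to E\to \tau^{-1}N\to 0$ is precisely the almost split sequence $0\to \tau M\to E\to M\to 0$ that served as the hypothesis of the previous proposition. Substituting $M=\tau^{-1}N$ into statements (a)(1) and (a)(2) there reproduces statements (a)(2) and (a)(1) here, and the almost split property is preserved under this relabelling.

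For part (b), the strategy is to dualize the explicit construction of the almost split sequence ending in $(M,0,0)$ given in part (b) of the previous proposition. Concretely, the minimal injective resolution $0\to N\to I_0\to I_1$ of $N$ over $\Lambda$ becomes, after applying $D$, a minimal projective presentation $D(I_1)\to D(I_0)\to DN\to 0$ over $\Lambda^{op}$, and then applying $(-)^{\ast}=\mathrm{Hom}_{\Lambda^{op}}(-,\Lambda)$ gives the projective presentation $D(I_0)^{\ast}\xrightarrow{D(q_1)^{\ast}}D(I_1)^{\ast}\to \tau^{-1}N\to 0$, by the formula $\tau^{-1}=\mathrm{tr}\,D$. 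The dual of the calculation used in the previous proposition to compute $\tau(M,0,0)$, carried out in $\mathrm{mod}\,\Gamma^{op}$ on $(DN,0,0)$ and then pulled back through $D$, identifies $\tau^{-1}(0,N,0)$ with the map $(D(I_0)^{\ast},D(I_1)^{\ast},D(q_1)^{\ast})$ that appears in the statement.

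With this identification in place, the map $\bar v\colon D(I_1)^{\ast}\to E$ is obtained by lifting the projective $D(I_1)^{\ast}\to \tau^{-1}N$ along the surjection $\pi\colon E\to \tau^{-1}N$, and then $v\colon D(I_0)^{\ast}\to N$ comes from the universal property of the kernel $j\colon N\hookrightarrow E$. One then forms the pushout of $D(I_1)^{\ast}\xleftarrow{D(q_1)^{\ast}} D(I_0)^{\ast}\xrightarrow{v}N$, which is precisely the middle term $(D(I_0)^{\ast},D(I_1)^{\ast}\oplus N,\bigl(\begin{smallmatrix}D(q_1)^{\ast}\\ v\end{smallmatrix}\bigr))$ of the candidate almost split sequence, together with the stated maps $(j_2,j_1)=(0,\bigl(\begin{smallmatrix}0\\ 1\end{smallmatrix}\bigr))$ and $(\pi_1,\pi_2)=(1,(1,0))$. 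The verification that this sequence does not split runs exactly as in the previous proposition: a hypothetical splitting $(M,0,0)\to (E_1,E_2,h)$ of $(\pi_1,\pi_2)$ would, via the pushout, produce a section of $\pi\colon E\to \tau^{-1}N$, contradicting the non-splitness of the underlying almost split sequence of $\Lambda$-modules. Finally, any non-isomorphism $(\sigma_1,\sigma_2)\colon (X_1,X_2,f)\to (D(I_0)^{\ast},D(I_1)^{\ast},D(q_1)^{\ast})$ lifts to the middle term using that $\pi$ is right almost split in $\mathrm{mod}\,\Lambda$, by the same diagram chase as in the previous proposition. The main obstacle is simply bookkeeping: keeping track of the dualities $D$ and $(-)^{\ast}$, the signs in the entries of the pushout, and the compatibility of $\bar v$ and $v$ through the commutative diagram shown in the statement; no new conceptual ingredient is needed beyond what is already present in the proof of part (b) of the preceding proposition.
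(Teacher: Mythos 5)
Your proposal is correct and matches the paper's approach: the paper itself offers no written proof for this proposition, simply asserting it as the dual of the preceding one, and your argument is exactly that dualization (relabelling $N=\tau M$ for part (a), and dualizing the $D\mathrm{tr}$ computation and the pullback construction into a $\mathrm{tr}D$ computation and a pushout for part (b)). No discrepancy to report.
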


We will prove next that almost split sequences of objects which do not
belong to the special cases consider before, are exact sequences in the
relative structure $\mathcal{S}.$

\begin{theorem}
Let
\begin{equation*}
0\rightarrow (N_{1},N_{2},g)\xrightarrow{(j_1\ j_2)}(E_{1},E_{2},h)%
\xrightarrow{(p_1\ p_2)}(M_{1},M_{2},f)\rightarrow 0
\end{equation*}%
be an almost split sequence of $\Gamma $-modules and assume that both $g,f,$
are neither splittable epimorphisms, nor splittable monomorphisms. Consider
the following commutative commutative exact diagram:
\begin{equation}
\begin{diagram}\dgARROWLENGTH=1em \node{} \node{0}\arrow{s}
\node{0}\arrow{s} \node{0}\arrow{s} \node{} \node{}\\ \node{0}\arrow{e,t}{}
\node{N_0}\arrow{s,l}{j_0}\arrow{e,t}{}
\node{N_1}\arrow{s,l}{j_1}\arrow{e,t}{}
\node{N_2}\arrow{s,l}{j_2}\arrow{e,t}{}
\node{N_3}\arrow{s,l}{j_3}\arrow{e,t}{} \node{0}\\ \node{0}\arrow{e,t}{}
\node{E_0}\arrow{s,l}{p_0}\arrow{e,t}{}
\node{E_1}\arrow{s,l}{p_1}\arrow{e,t}{}
\node{E_2}\arrow{s,l}{p_2}\arrow{e,t}{}
\node{E_3}\arrow{s,l}{p_3}\arrow{e,t}{} \node{0}\\ \node{0}\arrow{e,t}{}
\node{M_0}\arrow{e,t}{} \node{M_1}\arrow{s}\arrow{e,t}{}
\node{M_2}\arrow{s}\arrow{e,t}{} \node{M_3}\arrow{s}\arrow{e,t}{} \node{0}\\
\node{} \node{} \node{0} \node{0} \node{0} \node{} \end{diagram}
\label{diagramalmost}
\end{equation}

Then, the map $p_0:E_0\rightarrow M_0$ is an epimorphism, $%
j_3:N_3\rightarrow E_3$ is a monomorphism, and the exact sequences
\begin{equation}
0\rightarrow N_i\xrightarrow{j_i} E_i\xrightarrow{p_i} M_i\rightarrow 0,\
0\le i\le 3
\end{equation}
split.
\end{theorem}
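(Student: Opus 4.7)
The plan is to apply the snake lemma to the two middle columns of the given diagram, reducing the epi/mono statements about $p_0$ and $j_3$ to the vanishing of a single connecting morphism, and then to exhibit sections of $p_0$, $p_1$, $p_2$ and a retraction of $j_3$ by testing the almost split sequence against four carefully chosen morphisms.

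Note first that the middle two columns form short exact sequences $0\to N_i\to E_i\to M_i\to 0$ for $i=1,2$, since the given sequence of $\Gamma$-modules is exact. Applying the snake lemma to the vertical maps $g,h,f$ yields the long exact sequence
\[
0 \to N_0 \to E_0 \xrightarrow{p_0} M_0 \xrightarrow{\partial} N_3 \xrightarrow{j_3} E_3 \to M_3 \to 0,
\]
so that ``$p_0$ epi'' and ``$j_3$ mono'' are both equivalent to $\partial=0$. Before proceeding I would record the following consequences of the hypothesis: since $(M_1,M_2,f)$ is indecomposable and $f$ is neither a split mono nor a split epi, we have $M_1\neq 0$, $M_2\neq 0$, $f\neq 0$, and $f$ is not an isomorphism; the analogous statements hold for $g$.

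To produce a section of $p_0$, I would apply the almost split property to the $\Gamma$-morphism $(\iota,0)\colon (M_0,0,0)\to (M_1,M_2,f)$, where $\iota\colon M_0=\ker f\hookrightarrow M_1$. A short check, using $M_2\neq 0$, shows this is not a split epi, so it factors as $(p_1,p_2)\circ(\alpha,0)$ for some $\alpha\colon M_0\to E_1$. The second component of the factorization forces $h\alpha=0$, so $\alpha$ routes through $E_0=\ker h$ via some $\tilde\alpha\colon M_0\to E_0$, and since $\iota$ is mono a diagram chase gives $p_0\tilde\alpha=1_{M_0}$. Dually, testing against $(0,\pi)\colon(N_1,N_2,g)\to(0,N_3,0)$, with $\pi\colon N_2\twoheadrightarrow N_3=\mathrm{coker}\,g$ (not a split mono since $N_1\neq 0$), produces a retraction of $j_3$. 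Hence $\partial=0$, and the columns at $i=0$ and $i=3$ are split short exact sequences.

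For the splittings at $i=1,2$ I would apply the same principle to
\[
(1_{M_1},f)\colon (M_1,M_1,1_{M_1})\to(M_1,M_2,f), \qquad (0,1_{M_2})\colon (0,M_2,0)\to(M_1,M_2,f).
\]
A direct inspection of what a splitting in $\mathrm{mod}\,\Gamma$ would demand shows the first is a split epi iff $f$ is an isomorphism, and the second is a split epi only when $f=0$ or $M_1=0$; by the standing observations neither case occurs, so each factors through $(E_1,E_2,h)$ and yields $\gamma_1\colon M_1\to E_1$ and $\delta_2\colon M_2\to E_2$ with $p_1\gamma_1=1_{M_1}$ and $p_2\delta_2=1_{M_2}$. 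The main delicate point, and the only place where anything more than diagram chasing is needed, is verifying that each of the four test morphisms really fails to be a split epi (respectively split mono) in $\mathrm{mod}\,\Gamma$ rather than merely component-wise; this reduces in each case to a small matrix calculation on the commuting square defining a $\Gamma$-module map, and it is exactly there that the hypothesis on $f$ and $g$ is used in full.
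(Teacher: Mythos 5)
Your proof is correct, and on the substantive part of the statement it takes a genuinely different route from the paper's. For the columns at $i=1,2$ you and the paper do the same thing: test the right almost split map against $(M_1,M_1,1_{M_1})\rightarrow (M_1,M_2,f)$ and $(0,M_2,0)\rightarrow (M_1,M_2,f)$, and your non\-splitness checks (the first splits iff $f$ is an isomorphism, the second forces $M_1=0$) are exactly right. The divergence is at $i=0$ and $i=3$. The paper passes to $\mathrm{mod}(\mathrm{mod}\,\Lambda)$, applies the snake lemma to the induced diagram of representable functors, and proves the connecting map vanishes by factoring $\rho:G\rightarrow H$ through its image, lifting to projective presentations, and using irreducibility of $(j_1,j_2)$ to force a dichotomy whose bad branch contradicts the non-splitness of $f$; the statement about $j_3$ is then obtained by dualizing the whole argument. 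You instead stay inside $\mathrm{mod}\,\Gamma$ and test the sequence against two further explicit objects, $(\ker f,0,0)\rightarrow (M_1,M_2,f)$ and $(N_1,N_2,g)\rightarrow (0,\mathrm{coker}\,g,0)$: the commutativity constraint on the resulting factorization automatically places the lift inside $\ker h$ (respectively factors it through $\mathrm{coker}\,h$), yielding a section of $p_0$ and a retraction of $j_3$ directly. This is shorter and more elementary, and your snake-lemma remark that $p_0$ epi and $j_3$ mono are both equivalent to $\partial=0$ makes one of the two arguments logically redundant. What the paper's heavier route buys is the functor-category diagram $0\rightarrow G\rightarrow H\rightarrow F\rightarrow 0$ and the analysis of liftings modulo homotopy, which it immediately reuses in the next theorem to show that $\varPhi$ preserves almost split sequences. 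I find no gap in your argument; the only point worth writing out in full is the list of four splitting computations you defer, and each of them checks out.
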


\begin{proof}
The map $(1_{M_{1}}\ f):(M_{1},M_{1},1_{M_{1}})\rightarrow (M_{1},M_{2},f)$
is not a splittable epimorphism. Therefore it factors through $E_{1}%
\xrightarrow{h}E_{2}$.  Hence; there exists a map $(t_{1}\
t_{2}):(M_{1},M_{2},f)\rightarrow (E_{1},E_{2},h)$, with $(p_{1}\
p_{2})(t_{1}\ t_{2})=(1_{M_{1}}\ f)$. Similarly, the map $(0\
1_{M_{2}}):(0,M_{2},0)\rightarrow (M_{1},M_{2},f)$ is not a splittable
epimorphism. Hence; there exists a map $(t_{1}\
t_{2}):(M_{1},M_{2},f)\rightarrow (E_{1},E_{2},h)$ with $(p_{1}\
p_{2})(t_{1}\ t_{2})=(0\ 1_{M_{2}})$. We have proved that  for $i=1,2,$ the
exact sequences $0\rightarrow N_{i}\xrightarrow{j_i}E_{i}\xrightarrow{p_i}%
M_{i}\rightarrow 0$ split.

The diagram (\ref{diagramalmost}) induces the following commutative diagram
\begin{equation*}
\begin{diagram}\dgARROWLENGTH=1em \node{} \node{0}\arrow{s}
\node{0}\arrow{s} \node{0}\arrow{s} \node{} \node{}\\ \node{0}\arrow{e,t}{}
\node{(-,N_0)}\arrow{s,l}{(-,j_0)}\arrow{e,t}{}
\node{(-,N_1)}\arrow{s,l}{(-,j_1)}\arrow{e,t}{(-,g)}
\node{(-,N_2)}\arrow{s,l}{(-,j_2)}\arrow{e,t}{\tau}
\node{G}\arrow{s,l}{\rho}\arrow{e,t}{} \node{0}\\ \node{0}\arrow{e,t}{}
\node{(-,E_0)}\arrow{s,l}{(-,p_0)}\arrow{e,t}{}
\node{(-,E_1)}\arrow{s,l}{(-,p_1)}\arrow{e,t}{(-,h)}
\node{(-,E_2)}\arrow{s,l}{(-,p_2)}\arrow{e,t}{\pi}
\node{H}\arrow{s,l}{\sigma}\arrow{e,t}{} \node{0}\\ \node{0}\arrow{e,t}{}
\node{(-,M_0)}\arrow{e,t}{} \node{(-,M_1)}\arrow{s}\arrow{e,t}{(-,f)}
\node{(-,M_2)}\arrow{s}\arrow{e,t}{\eta} \node{F}\arrow{s}\arrow{e,t}{}
\node{0}\\ \node{} \node{} \node{0} \node{0} \node{0} \node{} \end{diagram}
\end{equation*}%
By the Snake's Lemma, we have a connecting map $\delta $,
\begin{equation*}
\cdots \rightarrow (-,E_{0})\xrightarrow{(-,p_0)}(-,M_{0})%
\xrightarrow{\delta}G\xrightarrow{\rho}H\rightarrow \cdots
\end{equation*}%
We want to prove $\rho $ is a monomorphism. Let $\rho :G\xrightarrow{\rho_1}%
\mathrm{Im}\rho \xrightarrow{\rho_2}H$ be a factorization through its image.

Since $\mathrm{mod}(\mathrm{mod}\Lambda )$ is an abelian category, $\mathrm{%
Im}\rho $ is a finitely presented functor, with presentation

\begin{equation*}
(-,X_{1})\xrightarrow{(-,t)}(-,X_{2})\rightarrow \mathrm{Im}\rho \rightarrow
0.
\end{equation*}

Lifting the maps $\rho _{1},\rho _{2}$ we obtain a commutative diagram with
exact rows:
\begin{equation*}
\begin{diagram}\dgARROWLENGTH=1em
\node{(-,N_1)}\arrow{e,t}{(-,g)}\arrow{s,l}{(-,u_1)}
\node{(-,N_2)}\arrow{e,t}{}\arrow{s,l}{(-,u_2)}
\node{G}\arrow{e,t}{}\arrow{s,l}{\rho_1} \node{0}\\
\node{(-,X_1)}\arrow{e,t}{(-,t)}\arrow{s,l}{(-,v_1)}
\node{(-,X_2)}\arrow{e,t}{}\arrow{s,l}{(-,v_2)}
\node{\mathrm{Im}\rho}\arrow{e,t}{}\arrow{s,l}{\rho_1} \node{0}\\
\node{(-,E_1)}\arrow{e,t}{(-,h)} \node{(-,E_2)}\arrow{e,t}{}
\node{H}\arrow{e,t}{} \node{0} \end{diagram}
\end{equation*}%
whose composition is another lifting of $\rho $. Then the two liftings are
homotopic and there exist maps $(-,s_{1}):(-,N_{2})\rightarrow (-,E_{1})$, $%
(-,s_{2}):(-,N_{1})\rightarrow (-,E_{0})$ such that $%
(-,j_{2})=(-,h)(-,s_{1})+(-,v_{1}u_{1})$, $%
(-,j_{2})=(-,h_{1})(-,s_{2})+(-,s_{1})(-,g)+(-,v_{2}u_{2})$. This is $%
j_{2}=hs_{1}+v_{1}u_{1}$, $j_{1}=h_{1}s_{2}+s_{1}g+v_{2}u_{2}$. Consider the
following commutative diagram
\begin{equation*}
\begin{diagram}\dgARROWLENGTH=2.5em
\node{N_1}\arrow{s,l}{\left(\begin{matrix} u_1\\ s_1g\\ s_2
\end{matrix}\right) }\arrow{e,t}{g}
\node{N_2}\arrow{s,r}{\left(\begin{matrix} u_2\\ s_1 \end{matrix}\right) }\\
\node{X_1\oplus E_1\oplus E_0}\arrow{e,t}{\left(\begin{matrix} t&0&0\\
0&1_{E_1}&0 \end{matrix}\right) }\arrow{s,l}{(v_1\ 1_{E_1}\ h_1)}
\node{X_2\oplus E_1}\arrow{s,r}{(v_2\ h)}\\ \node{E_1}\arrow{e,t}{h}
\node{E_2} \end{diagram}
\end{equation*}

But
\begin{equation}
(-,X_{1}\oplus E_{1}\oplus E_{0})\xrightarrow{\left(-,\left(\begin{matrix}
t&0&0\\ 0&1_{E_1}&0 \end{matrix}\right)\right)}(-,X_{2}\oplus
E_{1})\rightarrow \mathrm{Im}\rho \rightarrow 0  \label{diagramalmost1}
\end{equation}%
is exact. Changing $(-,X_{1})\xrightarrow{(-,t)}X_{2}$ by (\ref%
{diagramalmost1}), we can assume $v_{1}u_{1}=l_{i}$, $i=1,2$; but being $%
(l_{1},l_{2}):(N_{1},N_{2},g)\rightarrow (E_{1},E_{2},h)$ an irreducible
map, this implies either $(u_{1},u_{2}):(N_{1},N_{2},g)\rightarrow
(X_{1},X_{2},t)$ is a splittable monomorphism or $%
(v_{1},v_{2}):(X_{1},X_{2},t)\rightarrow (E_{1},E_{2},h)$ is a splittable
epimorphism.

In the second case we have a map $(s_{1},s_{2}):(E_{1},E_{2},h)\rightarrow
(X_{1},X_{2},t),$ with $(v_{1}\ v_{2})(s_{1}\ s_{2})=(1_{E_{1}}\ 1_{E_{2}})$%
. Then there exists a map $\sigma :H\rightarrow \mathrm{Im}\rho $, such that
$\rho _{2}\sigma =1_{H}$. It follows $\rho _{2}$ is an isomorphism. Hence; $%
F=0$ and $f:M_{1}\rightarrow M_{2}$ is a splittable epimorphism. A
contradiction.

Now, if $(u_{1}\ u_{2})$ is a splittable monomorphism, then there exists a
map $(q_{1}\ q_{2}):(X_{1},X_{2},t)\rightarrow (N_{1},N_{2},g)$, with $%
(q_{1}\ q_{2})(u_{1}\ u_{2})=(1_{N_{1}}\ 1_{N_{2}})$. Then, there exists $%
\sigma :\mathrm{Im}\rho \rightarrow G$ such that $\sigma \rho =1_{G}$, and $%
\rho _{1}$ is an isomorphism, in particular $\rho $ is a monomorphism.It
follows $(-,E_{0})\xrightarrow{(-,p_0)}(-,M_{0})$ is an epimorphism.
Therefore: $0\rightarrow N_{0}\xrightarrow{j_0}E_{0}\xrightarrow{p_0}%
M_{0}\rightarrow 0$ is an exact sequence that splits. A contradiction.

Dualizing the diagram, we obtain, the exact sequence $0\rightarrow
D(M_{3})\rightarrow D(E_{3})\rightarrow D(N_{3})\rightarrow 0$ splits.
Therefore the exact sequence $0\rightarrow N_{3}\rightarrow E_{3}\rightarrow
M_{3}\rightarrow 0$ splits.
\end{proof}

We can see now that the functor $\Phi $ preserves almost split sequences.

\begin{theorem}
Let
\[
 0\rightarrow (N_{1},N_{2},g)\xrightarrow{(j_1\ j_2)}(E_{1},E_{2},h)%
\xrightarrow{(p_1\ p_2)}(M_{1},M_{2},f)\rightarrow 0
\]
be an almost split sequence, such that $g,f,$ are neither splittable epimorphisms nor
splittable monomorphisms. Then the exact sequence
\[
 0\rightarrow G%
\xrightarrow{\rho}H\xrightarrow{\theta}F\rightarrow 0
\]
 obtained from the commutative diagram:
\begin{equation*}
\begin{diagram}\dgARROWLENGTH=.7em \node{0}\arrow{s} \node{0}\arrow{s}
\node{0}\arrow{s} \node{}\\
\node{(-,N_1)}\arrow{s,l}{(-,j_1)}\arrow{e,t}{(-,g)}
\node{(-,N_2)}\arrow{s,l}{(-,j_2)}\arrow{e,t}{}
\node{G}\arrow{s,l}{\rho}\arrow{e,t}{} \node{0}\\
\node{(-,E_1)}\arrow{s,l}{(-,p_1)}\arrow{e,t}{(-,h)}
\node{(-,E_2)}\arrow{s,l}{(-,p_2)}\arrow{e,t}{}
\node{H}\arrow{s,l}{\theta}\arrow{e,t}{} \node{0}\\
\node{(-,M_1)}\arrow{s}\arrow{e,t}{(-,f)}
\node{(-,M_2)}\arrow{s}\arrow{e,t}{} \node{F}\arrow{s}\arrow{e,t}{}
\node{0}\\ \node{0} \node{0} \node{0} \node{} \end{diagram}
\end{equation*}%
is an almost split sequence
\end{theorem}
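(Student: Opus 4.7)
The plan is to exploit the equivalence $\underline{\varPhi}:\underline{\mathrm{maps}}(\mathcal{C})\to\mathrm{mod}(\mathcal{C})$ together with the locality of $\mathrm{End}_\Gamma(M_1,M_2,f)$ in order to promote statements \emph{modulo homotopy} to genuine statements in $\mathrm{mod}\,\Gamma$. First I would verify that $0\to G\to H\to F\to 0$ is short exact: the previous theorem forces each column $0\to N_i\to E_i\to M_i\to 0$ to split, so after applying $\mathcal{C}(-,?)$ the middle differential $(-,h):(-,E_1)\to(-,E_2)$ becomes upper triangular with respect to the splittings $(-,E_i)=(-,N_i)\oplus(-,M_i)$, and taking cokernels produces the desired short exact sequence in $\mathrm{mod}(\mathcal{C})$.

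For indecomposability of $F$ and non-splitting, consider the surjection of endomorphism rings
\[
\pi:\mathrm{End}_\Gamma(M_1,M_2,f)\twoheadrightarrow\mathrm{End}(F)
\]
induced by $\underline{\varPhi}$, whose kernel consists of morphisms whose second component factors through $f$. Because $f$ is not a splittable epimorphism, $F=\varPhi(M_1,M_2,f)$ is nonzero, so $\ker\pi$ is a proper ideal of the local ring $\mathrm{End}_\Gamma(M_1,M_2,f)$ and therefore sits inside its Jacobson radical; consequently $\mathrm{End}(F)$ is local and $F$ is indecomposable. If $\sigma:F\to H$ were a section of $\theta$, I would lift $\sigma$ through $\underline{\varPhi}$ to $(\sigma_1,\sigma_2):(M_1,M_2,f)\to(E_1,E_2,h)$ in $\mathrm{mod}\,\Gamma$; then $(p_1,p_2)(\sigma_1,\sigma_2)-1\in\ker\pi\subseteq\mathrm{rad}\,\mathrm{End}_\Gamma(M_1,M_2,f)$, so $(p_1,p_2)(\sigma_1,\sigma_2)$ is a unit and the original almost split sequence would split, a contradiction.

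For the right almost split property, take $t:W\to F$ in $\mathrm{mod}(\mathcal{C})$ that is not a splittable epimorphism. Writing $W=\varPhi(W_1,W_2,w)$ and using fullness of $\underline{\varPhi}$, lift $t$ to $(t_1,t_2):(W_1,W_2,w)\to(M_1,M_2,f)$. Since $\varPhi$ preserves splittings, any section of $(t_1,t_2)$ in $\mathrm{mod}\,\Gamma$ would project to a section of $t$, so $(t_1,t_2)$ is not a splittable epimorphism; the right almost split property of $(p_1,p_2)$ then furnishes $(s_1,s_2):(W_1,W_2,w)\to(E_1,E_2,h)$ with $(p_1,p_2)(s_1,s_2)=(t_1,t_2)$, and applying $\varPhi$ yields $\theta\circ\varPhi(s_1,s_2)=t$.

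The crux is the locality/radical step of the second paragraph: one must ensure that $\ker\pi$ really lies inside $\mathrm{rad}\,\mathrm{End}_\Gamma(M_1,M_2,f)$, which is exactly what the hypothesis that $f$ is neither a splittable monomorphism nor a splittable epimorphism guarantees via the non-vanishing of $F$. This is the bridge that upgrades ``homotopic to the identity'' to an honest unit in $\mathrm{mod}\,\Gamma$ and lets all three properties transfer cleanly across $\varPhi$.
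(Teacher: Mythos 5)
Your treatment of the non-splitting and of the right almost split property is sound, and it is essentially the paper's argument in a cleaner package: where the paper writes out the homotopy explicitly and checks that $p_2s_2=1_{M_2}-fw_2$ and $p_1s_1=1_{M_1}-(w_2f+f_1w_1)$ are invertible because $f$ and $f_1$ are radical maps, you observe once and for all that $\ker\bigl(\mathrm{End}_{\Gamma}(M_1,M_2,f)\to\mathrm{End}(F)\bigr)$ is a proper ideal of a local ring and hence lies in the radical. That is a genuine simplification. But there is a gap at the end: what you have actually established is that $0\to G\xrightarrow{\rho}H\xrightarrow{\theta}F\to 0$ is non-split, that $F$ is indecomposable, and that $\theta$ is right almost split. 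This does not yet make the sequence almost split. The standard criterion needs, in addition, either that $\rho$ be left almost split or that $\mathrm{End}(G)$ be local: a non-split exact sequence $0\to A\to B\xrightarrow{g} C\to 0$ with $C$ indecomposable and $g$ right almost split can fail to be almost split (take an almost split sequence, add a summand $A'$ to both $A$ and $B$, and extend $g$ by zero; the new epimorphism is still right almost split and the sequence still does not split, but it is not right minimal). The paper closes this half of the argument with ``in a similar way we prove $0\to G\to H$ is left almost split''; you never address it, and your closing sentence about ``all three properties'' transferring does not substitute for the missing verification.

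The gap is fillable by exactly the tool you already use: $(N_1,N_2,g)$ is the left end of an almost split sequence in $\mathrm{mod}\,\Gamma$, hence has local endomorphism ring, and since $g$ is not a splittable epimorphism $G=\varPhi(N_1,N_2,g)\neq 0$, so the kernel of $\mathrm{End}_{\Gamma}(N_1,N_2,g)\to\mathrm{End}(G)$ is again a proper ideal of a local ring; thus $\mathrm{End}(G)$ is local, and non-split $+$ $G$ indecomposable $+$ $\theta$ right almost split does imply the sequence is almost split. Alternatively, prove the left almost split property directly by the dual lifting argument (a map $G\to W$ that is not a splittable monomorphism lifts to a map of presentations $(N_1,N_2,g)\to(W_1,W_2,w)$ that is not a splittable monomorphism, hence factors through $(j_1,j_2)$). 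One smaller imprecision in your first paragraph: upper-triangularity of $(-,h)$ with respect to the splittings does not by itself give left exactness of $G\to H\to F$; one needs the connecting map $\ker(-,f)\to G$ to vanish, which is supplied by the previous theorem's further conclusion that the kernel column $0\to N_0\to E_0\to M_0\to 0$ also splits, so that $(-,p_0)$ is an epimorphism.
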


\begin{proof}

(1) The sequence $0\rightarrow G\xrightarrow{\rho}H\xrightarrow{\theta}%
E\rightarrow 0$ does not split.

Assume it does split and let $u:F\rightarrow H$, with $\theta u=1_{F}$ be
the splitting. There is a lifting of $u$ making the following diagram, with
exact raws, commute:
\begin{equation*}
\begin{diagram}\dgARROWLENGTH=.7em \node{(-,M_0)}
\arrow{e,t}{(-,f_1)}\arrow{s,l}{(-,s_0)} \node{(-,M_1)}
\arrow{e,t}{(-,f)}\arrow{s,l}{(-,s_1)}
\node{(-,M_2)}\arrow{e}\arrow{s,l}{(-,s_2)} \node{F}\arrow{e}\arrow{s}
\node{0}\\ \node{(-,E_0)} \arrow{e,t}{(-,h_1)}\arrow{s,l}{(-,p_0)}
\node{(-,E_1)} \arrow{e,t}{(-,h)}\arrow{s,l}{(-,p_1)}
\node{(-,E_2)}\arrow{e}\arrow{s,l}{(-,p_2)} \node{H}\arrow{e}\arrow{s}
\node{0}\\ \node{(-,M_0)} \arrow{e,t}{(-,f_1)} \node{(-,M_1)}
\arrow{e,t}{(-,f)} \node{(-,M_2)}\arrow{e} \node{F}\arrow{e} \node{0}
\end{diagram}
\end{equation*}%
The composition is a lifting of the identity, and as before, it is homotopic
to the identity. By Yoneda's lemma, there exist maps, $w_{2}:M_{2}%
\rightarrow M_{1}$, $w_{1}:M_{1}\rightarrow M_{0}$, such that $%
fw_{2}+p_{2}s_{2}=1_{M_{2}}$, $w_{2}f+f_{1}w_{1}+p_{1}s_{1}=1_{M_{1}}$.
Since $f\in \mathrm{rad}(M_{1},M_{2})$, $f_{1}\in \mathrm{rad}(M_{0},M_{1})$%
, this implies $w_{2}f,f_{1}w_{1}\in \mathrm{rad}\mathrm{End}(M_{1})$ and $%
fw_{2}\in \mathrm{rad}\mathrm{End}(M_{2})$. It follows $%
p_{2}s_{2}=1_{M_{2}}-fw_{2}$ and $p_{1}s_{1}=1_{M_{1}}-(w_{2}f+f_{1}w_{1})$
are invertible. Therefore: $(p_{1}\ p_{2}):(E_{1},E_{2},h)\rightarrow
(M_{1},M_{2},f)$ is a splittable epimorphism. A contradiction.

(2) Let $\eta :L\rightarrow F$ be a non splittable epimorphism, and $%
(-,X_{1})\xrightarrow{(-,t)}(-,X_{2})\rightarrow L\rightarrow 0$ a
projective presentation of $L$. The map $\eta $ lifts to a map $(-,\eta
_{i}):(-,X_{i})\rightarrow (-,M_{i})$, $i=1,2$. Then the map $(\eta _{1}\
\eta _{2}):(X_{1},X_{2},t)\rightarrow (M_{1},M_{2},f)$ is not a splittable
epimorphism., and there exists a map $(v_{1}\
v_{2}):(X_{1},X_{2},t)\rightarrow (E_{1},E_{2},h)$, with $(p_{1}\
p_{2})(v_{1}\ v_{2})=(\eta _{1}\ \eta _{2})$.

The map $(v_{1}\ v_{2})$ induces a map $\sigma :L\rightarrow H$ with $\theta
\sigma =\eta $.

In a similar way we prove $0\rightarrow G\rightarrow H$ is left almost split.
\end{proof}

Assume now (*) $0\rightarrow G\rightarrow H\rightarrow F\rightarrow 0$ is an
almost split sequence in $\mathrm{mod}(\mathrm{mod}\Lambda )$. Let $(-,M_{1})%
\xrightarrow{(-,f)}(-,M_{2})\rightarrow F\rightarrow 0$ be a minimal
projective projective presentation of $F$. The map $M_{1}\xrightarrow{f}%
M_{2} $ is an indecomposable object in $\mathrm{mod}\left(
\begin{matrix}
\Lambda & 0 \\
\Lambda & \Lambda%
\end{matrix}%
\right) $ and is not projective.

Then we have an almost split sequence in $\mathrm{maps}(\Lambda )$:
\begin{equation*}
0\rightarrow N=(N_{1},N_{2},g)\xrightarrow{(j_1\ j_2)}E=(E_{1},E_{2},h)%
\xrightarrow{(p_1\ p_2)}M=(M_{1},M_{2},f)\rightarrow 0
\end{equation*}%
where $f,g$ are both neither splittable monomorphisms  and nor splittable
epimorphisms. Applying the functor $\varPhi$ to the above sequence we
obtain an almost split sequence (**) $0\rightarrow \varPhi(N)\rightarrow %
\varPhi(E)\rightarrow \varPhi(M)\rightarrow 0$ with $\varPhi(M)=F$. By the
uniqueness of the almost split sequence $\varPhi(N)=G$, $\varPhi(E)=H$ and
(*) is isomorphic to (**).

\subsubsection{An example}

Let $\Lambda =\left(
\begin{matrix}
K & 0 \\
K & K%
\end{matrix}%
\right) $ be the algebra isomorphic to to the quiver algebra $KQ$, where $Q$
is: $1\rightarrow 2$ and $\Gamma =\left(
\begin{array}{cc}
\Lambda  & 0 \\
\Lambda  & \Lambda
\end{array}%
\right) $. The algebra $\Lambda $ has a simple projective $S_{2}$, a simple
injective $S_{1}$ and a projective injective $P_{1}$ .

The projective $\Gamma $-modules correspond to the maps: $0\rightarrow S_{2}$%
, $0\rightarrow P_{1}$, $P_{1}\xrightarrow{1_{P_1}}P_{1}$ and $S_{2}%
\xrightarrow{1_{S_2}}S_{2}$. We compute the almost split sequences in
\textrm{maps(mod(}$\Lambda ))$ to obtain exact sequences:
\begin{equation*}
0\rightarrow N\xrightarrow{j}E\xrightarrow{\pi}M\rightarrow 0
\end{equation*}%
with:

(a) $N=(0,S_{2},0)$, $E=(S_{2},S_{2}\oplus P_{1},\left(
_{0}^{1_{S_{2}}}\right) )$, $M=(S_{2},P_{1},f)$, $j=(0\ $,$\left(
_{f}^{1_{S_{2}}}\right) )$, $\pi =(1_{S_{2}}$,$(f\ $,$-1_{P_{1}}))$

(b) $N=(S_{2},P_{1},f)$, $E=(P_{1}\oplus S_{2},P_{1}\oplus S_{1},\left(
_{0}^{1_{P_{1}}}\ _{0}^{0}\right) )$, $M=(P_{1},S_{1},g)$, $j=(\left(
_{1_{S_{2}}}^{f}\right) $,$\left( _{g}^{1_{P_{1}}}\right) )$, $\pi
=((-1_{P_{1}},\ f),\ (-g\ ,1_{S_{1}})$.

(c) $N=(P_{1},S_{1},g)$, $E=(S_{1}\oplus P_{1},S_{1},(1_{S_{1}}\ 0))$, $%
M=(S_{1},0,0)$, $j=(\left( _{1_{P_{1}}}^{g}\right) \ 1_{S_{1}})$, $\pi
=((-1_{S_{1}}\ g)\ 0)$.

The Auslander-Reiten quiver in $\mathrm{maps}(\Lambda)$ is:
\[
\begin{diagram}\dgARROWLENGTH=.05em \dgTEXTARROWLENGTH=.5em
\dgHORIZPAD=.6em
\dgVERTPAD=.6ex
 \node{} \node{(S_2,S_2,1)}\arrow{se}
  \node{} \node{(P_1,P_1,1)}\arrow{se}
   \node{} \node{(S_1,S_1,1)}\arrow{se}
    \node{}\\
\node{(0,S_2,0)}\arrow{ne}\arrow{se}
 \node{}
  \node{(S_2,P_1,f)}\arrow{ne}\arrow{e}\arrow{se}
  \node{(S_2,0,0)}\arrow{e}
  \node{(P_1,S_1,g)}\arrow{ne}\arrow{se} \node{} \node{(S_1,0,0)}\\
 \node{}
  \node{(0,P_1,0)}\arrow{ne}
    \node{} \node{(0,S_1,0)}\arrow{ne}
      \node{}
           \node{(P_1,0,0)}\arrow{ne}
 \end{diagram}
\]

Applying the functor $\varPhi$ we obtain the following Auslander-Reiten
quiver of $\mathrm{mod}(\mathrm{mod}\Lambda ):$
\begin{equation*}
0\rightarrow (-,S_{2})\rightarrow (-,P_{1})\rightarrow \mathrm{rad}%
(-,S_{1})\rightarrow (-,S_{1})\rightarrow S_{S_{1}}\rightarrow 0,
\end{equation*}%
which is isomorphic to the Auslander-Reiten quiver of $1\xrightarrow{\alpha}2%
\xrightarrow{\beta}3$, with $\beta \alpha =0$, and this is the Auslander
algebra of $\Lambda $.

\subsection{Tilting in $\mathrm{mod}(\mathrm{mod}\varLambda)$}

Let $\Lambda $ be an artin algebra. Since $\mathrm{maps}(\mathrm{mod}\Lambda
)$ is equivalent to the category $\mathrm{mod}\ \Gamma $, with $\Gamma
=\left(
\begin{matrix}
\Lambda  & 0 \\
\Lambda  & \Lambda
\end{matrix}%
\right) $, it is abelian, dualizing Krull-Schmidt, and it has kernels. Hence
it has pseudokernels, and we can apply the theory so far developed. In this
case the exact structure is easy to describe.

The collection of exact sequences $\mathcal{S}$ consists of the exact
sequences in the category $\mathrm{maps}(\mathrm{mod}\Lambda )$
\begin{equation*}
0\rightarrow (N_{1},N_{2},g)\rightarrow (E_{1},E_{2},h)\rightarrow
(M_{1},M_{2},f)\rightarrow 0
\end{equation*}%
such that in the following exact commutative diagram
\begin{equation}
\begin{diagram}\dgARROWLENGTH=.5em \node{}\node{0}\arrow{s,l}{}
\node{0}\arrow{s,l}{} \node{0}\arrow{s,l}{}\\ \node{0}\arrow{e,t}{}
\node{N_0}\arrow{e,t}{g_0}\arrow{s,l}{}
\node{N_1}\arrow{e,t}{g}\arrow{s,l}{} \node{N_2}\arrow{s,l}{}\\
\node{0}\arrow{e,t}{} \node{E_0}\arrow{e,t}{h_0}\arrow{s,l}{}
\node{E_1}\arrow{e,t}{h}\arrow{s,l}{} \node{E_2}\arrow{s,l}{}\\
\node{0}\arrow{e,t}{} \node{M_0}\arrow{e,t}{f_0}\arrow{s,l}{}
\node{M_1}\arrow{e,t}{f}\arrow{s,l}{} \node{M_2}\arrow{s,l}{}\\
\node{}\node{0} \node{0} \node{0} \end{diagram}  \label{triangle3}
\end{equation}%
the columns split . Here $(N_{0},g_{0})$, $(E_{0},h_{0})$, $(M_{0},f_{0})$
are the kerneles of the maps $g$, $h$ and $f$, respectively.

The collection\ \  $\mathcal{S}$\ \ gives rise\ \ to a subfunctor $F$ \ \ of the additive
bifunctor $\mathrm{Ext}_{\Gamma }^{1}(-,-):(\mathrm{mod}\Gamma )\times (%
\mathrm{mod}\Gamma )^{op}\rightarrow \mathbf{Ab}$. The category $\mathrm{maps}(%
\mathrm{mod}\Lambda )$ has enough $F$-projectives and enough $F$-injectives,
the $F$-projectives are the maps of the form $M\xrightarrow{1_M}M$ and $%
0\rightarrow M$, and the $F$-injectives are of the maps of the form $M%
\xrightarrow{1_M}M$ and $M\rightarrow 0$.

According to Theorem \ref{maps2} we have the following:

\begin{theorem}
Classical tilting subcategories in $\mathrm{mod}(\mathrm{mod}\Lambda )$
correspond under $\Psi $ with relative tilting subcategories $\mathcal{T}_{%
\mathrm{mod}\Lambda }$ of $\mathrm{maps}(\mathrm{mod}\Lambda )$, such that
the following statements hold:

\begin{itemize}
\item[(i)] The maps $f:T_{0}\xrightarrow{f}T_{1}$ of objects in $\mathcal{T}%
_{\mathrm{mod}\Lambda }$ are monomorphisms.

\item[(ii)] Given $T:f:T_{0}\xrightarrow{f}T_{1}$ and $T^{\prime
}:g:T_{0}^{\prime }\xrightarrow{g}T_{1}^{\prime }$ in $\mathcal{T}_{\mathcal{%
C}}$. Then $\mathrm{Ext}_{F}^{1}(T,T^{\prime })=0$.

\item[(iii)] For each object $C$ in $\mathcal{C}$, there exist a exact
sequence in $\mathrm{maps}(\mathrm{mod}\Lambda )$:

\begin{equation*}
\begin{diagram}\dgARROWLENGTH=1em \node{0}\arrow{s} \node{0}\arrow{s}\\
\node{0}\arrow{s,l}{}\arrow{e,t}{} \node{C}\arrow{s,l}{}\\
\node{T_0}\arrow{s,l}{}\arrow{e,t}{f} \node{T_1}\arrow{s,l}{}\\
\node{T_0}\arrow{e,t}{g}\arrow{s} \node{T_1^{\prime}}\arrow{s}\\
\node{0} \node{0} \end{diagram}
\end{equation*}

such that the second column splits and $T:f:T_{0}\rightarrow T_{1}$, $%
T^{\prime }:g:T_{0}\rightarrow T_{1}^{\prime }$ are in $\mathcal{T}_{%
\mathcal{C}}$.
\end{itemize}
\end{theorem}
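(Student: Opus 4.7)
The statement is essentially a specialization of Theorem \ref{maps2} to $\mathcal{C}=\mathrm{mod}\,\Lambda$, refined from generalized to \emph{classical} tilting. My plan is first to invoke Theorem \ref{maps2} to obtain the bijective correspondence $\varPhi$ between relative tilting subcategories of $\mathrm{maps}(\mathrm{mod}\,\Lambda)$ and generalized tilting subcategories of $\mathrm{mod}(\mathrm{mod}\,\Lambda)$, and then to verify that conditions (i)--(iii) translate the three classical tilting axioms (projective dimension at most one, Ext vanishing, and a coresolution of length one) into the map-category language.

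For (i), I would take $T=(T_0\xrightarrow{f}T_1)$ in $\mathcal{T}_{\mathrm{mod}\,\Lambda}$ and pick $P.\in P^{0}(\mathrm{mod}(\mathrm{mod}\,\Lambda))$ with $\varPsi(P.)=T$. By Proposition \ref{omegaP}, $\mathrm{pdim}\,\varPhi(T)=\mathrm{rpdim}\,P.$. By Yoneda, $f$ is a monomorphism precisely when $(-,f)$ is, which is equivalent to $0\to(-,T_0)\to(-,T_1)\to\varPhi(T)\to 0$ being exact, i.e., $\mathrm{pdim}\,\varPhi(T)\le 1$. Thus (i) is exactly the classical projective dimension bound imposed on $\varPhi(\mathcal{T}_{\mathrm{mod}\,\Lambda})$.

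For (ii), I would invoke the corollary following Proposition \ref{omegaP}: for $P.,Q.\in P^{0}(\mathrm{mod}(\mathrm{mod}\,\Lambda))$ without $F$-projective summands (the complexes listed in \eqref{relativeproj}) there is a natural isomorphism $\mathrm{Ext}^{k}_{C^{-}(\mathrm{mod}(\mathcal{C}))}(P.,Q.)\cong \mathrm{Ext}^{k}_{\mathrm{mod}(\mathcal{C})}(\varTheta(P.),\varTheta(Q.))$ for all $k\ge 1$. Because the exact structure on $\mathrm{maps}(\mathrm{mod}\,\Lambda)$ was \emph{defined} by transporting $\mathrm{Ext}^{1}_{C^{-}(\mathcal{A})}$ across $\alpha\circ\varPsi$, condition (ii) is literally the Ext vanishing required in the tilting definition for $\varPhi(\mathcal{T}_{\mathrm{mod}\,\Lambda})$; one may harmlessly strip any $F$-projective summand $M\xrightarrow{1_M}M$ or $0\to M$, since these contribute no Ext.

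For (iii), I would translate the generic coresolution $0\to(-,C)_{\circ}\to P_0\to\cdots\to P_n\to 0$ of the relative tilting definition under $\varPsi$. The pdim bound from (i) forces $n\le 1$, so after applying $\varPsi$ termwise a coresolution of length one unfolds precisely into the $3\times 2$ split-column diagram displayed in (iii); the splitting of the middle column records $F$-exactness in the relative structure, while the first column records that $(-,C)_\circ$ sits inside $P_0$. Conversely, a diagram as in (iii) can be reassembled, via the Horseshoe Lemma argument used in the proof of the corollary after Proposition \ref{omegaP}, into the required $F$-exact coresolution in $P^{0}(\mathrm{mod}(\mathrm{mod}\,\Lambda))$. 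I expect this translation of (iii) to be the main technical obstacle: one must coordinate carefully the identifications between $P^{0}(\mathcal{A})$, $\mathrm{maps}(\mathrm{mod}\,\Lambda)\simeq \mathrm{mod}\,\Gamma$, and $\mathrm{mod}(\mathrm{mod}\,\Lambda)$, whereas (i) and (ii) are direct consequences of Proposition \ref{omegaP} and its corollary.
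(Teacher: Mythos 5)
Your proposal is correct and follows the same route as the paper, which states this theorem without further argument as a direct specialization of Theorem \ref{maps2} (together with Proposition \ref{omegaP} and its corollary) to the case $\mathcal{C}=\mathrm{mod}\,\Lambda$. Your verification of (i) via Yoneda, of (ii) via the Ext-comparison corollary, and of (iii) by unfolding the length-one coresolution into the split-column diagram simply supplies the details the paper leaves implicit.
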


Since $\mathrm{gdim}(\mathrm{mod}\Lambda )\leq 2$, the relative global
dimension of $\mathrm{maps}(\mathrm{mod}\Lambda )$ is $\leq 2$.

For generalized tilting subcategories of $\mathrm{maps}(\mathrm{mod}\Lambda )
$ there is the following analogous to the previous theorem:

\begin{theorem}
Generalized tilting subcategories of $\mathrm{mod}(\mathrm{mod}\Lambda )$
correspond under $\Psi $ with relative tilting subcategories $\mathcal{T}_{%
\mathrm{mod}\Lambda }$ of $\mathrm{maps}(\mathrm{mod}\Lambda )$ such that
the following statements hold:

\begin{itemize}
\item[(i)] Given $T:T_{1}\rightarrow T_{0}$, $T^{\prime }:T_{1}^{\prime
}\rightarrow T_{0}^{\prime }$ in $\mathcal{T}_{\mathrm{mod}\Lambda }$. Then $%
\mathrm{Ext}_{F}^{k}(T,T^{\prime })=0$, for $0<k\leq 2.$

\item[(ii)] For each \ \ object $C$\ \  in $\mathcal{C}$, \ \ there exists a relative
exact sequence in $\mathrm{maps}(\mathrm{mod}\Lambda )$:
\begin{equation*}
0\rightarrow (0,C,0)\rightarrow T^{0}\rightarrow T^{1}\rightarrow
T^{2}\rightarrow 0
\end{equation*}%
with $T^{i}\in \mathcal{T}_{\mathrm{mod}\Lambda }$.
\end{itemize}
\end{theorem}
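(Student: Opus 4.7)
The plan is to derive this theorem as a direct translation of Theorem \ref{maps2}, specialized to $\mathcal{C}=\mathrm{mod}\Lambda$, by rewriting each clause of Definition 2.4 in the concrete form appearing here. The key simplification is that $\mathrm{gdim}(\mathrm{mod}\Lambda)\le 2$, observed just before the theorem. By Proposition \ref{omegaP}, for every $T=\Psi(P.)\in\mathrm{maps}(\mathrm{mod}\Lambda)$ we have $\mathrm{rpdim}\,P.=\mathrm{pdim}\,\Theta(P.)\le 2$, so clause (i) of Definition 2.4 holds automatically with uniform bound $n=2$, and any $\mathrm{Ext}^k_F(T,T')$ for $k>2$ on $\mathcal{T}_{\mathrm{mod}\Lambda}$ vanishes for free.

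Next I would match clause (ii) of Definition 2.4 to condition (i) of the theorem. By the Corollary to Proposition \ref{omegaP}, $\mathrm{Ext}^k_{C^-(\mathrm{mod}(\mathrm{mod}\Lambda))}(P.,Q.)\cong \mathrm{Ext}^k_{\mathrm{mod}(\mathrm{mod}\Lambda)}(\Theta(P.),\Theta(Q.))$, and the subfunctor $F$ on $\mathrm{maps}(\mathrm{mod}\Lambda)$ was defined precisely so that $\mathrm{Ext}^k_F(T,T')$ equals this group under $\alpha\Psi$. Combined with the dimension bound, the requirement ``for all $k\ge 1$'' in Definition 2.4(ii) reduces to ``for $0<k\le 2$'', which is the theorem's (i). For clause (iii) of Definition 2.4 I apply $\Psi$ degreewise to the $P^0$-coresolution $0\to(-,C)_\circ\to P_0\to\cdots\to P_n\to 0$: since $\Psi((-,C)_\circ)$ is the map $0\to C$, i.e.\ the object $(0,C,0)$, and the exact structure on $\mathrm{maps}(\mathrm{mod}\Lambda)$ is the transport of $\mathcal{S}_{P^0(\mathcal{A})}$ along $\alpha\Psi$, the image is a relative exact sequence $0\to(0,C,0)\to T^0\to T^1\to T^2\to 0$ with length bounded by $n\le 2$; this is the theorem's (ii). The converse direction reverses each of these translations clause by clause and concludes by Theorem \ref{maps2}.

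The main obstacle is the bookkeeping around the exact structure: one must verify that the relative class $\mathcal{S}$ described by the diagram (\ref{triangle3}) preceding the theorem (columnwise-split short exact sequences of maps) coincides with $\alpha\Psi(\mathcal{S}_{P^0(\mathcal{A})})$ from the earlier subsection, so that ``relative exact'' means the same thing in the two definitions and the Horseshoe-style argument underlying the Corollary preserves degreewise splittings. Once this compatibility is settled, the remaining work is a mechanical translation carried out one clause at a time, with the bound $n=2$ serving simultaneously as the projective-dimension cutoff and the length of the coresolution.
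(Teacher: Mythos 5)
Your proposal is correct and follows essentially the same route as the paper, which offers no explicit proof here: it presents the result as an immediate consequence of Theorem \ref{maps2} together with the remark that $\mathrm{gdim}(\mathrm{mod}\Lambda)\leq 2$ forces the relative global dimension of $\mathrm{maps}(\mathrm{mod}\Lambda)$ to be at most $2$. Your clause-by-clause translation of Definition 2.4 (with the dimension bound absorbing clause (i) and truncating the Ext-vanishing and the coresolution at $2$) is exactly the argument the paper leaves implicit, and your flagged concern about matching the two descriptions of the exact structure is a real point the paper also glosses over rather than a defect of your approach.
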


\subsection{Contravariantly Finite Categories in $\mathrm{mod}(\mathrm{mod}
\varLambda)$}

In this subsection we will see that some properties like: contravariently,
covariantly, functorially finite subcategories of $\mathrm{maps}(\mathrm{mod}%
\Lambda ),$ are preserved by the functor $\varPhi$.

The following theorem was proved in [18]. [See also 4 Theo. 5.5].

\begin{theorem}
Let $\mathcal{C}$ be a dualizing Krull-Schmidt variety. The assignments $%
\mathcal{T}\mapsto \mathcal{T}^{\bot }$ and $\mathscr Y\mapsto \mathscr %
Y\cap ^{\bot }\mathscr Y$ induce a bijection between equivalence classes of
tilting subcategories $\mathcal{T}$ of $\mathrm{mod}(\mathcal{C})$, with $%
\mathrm{pdim}\mathcal{T}\leq n$, such that $\mathcal{T}$ is a generator of $%
\mathcal{T}^{\bot }$ and classes of subcategories $\mathscr Y$ of $\mathrm{%
mod}(\mathcal{C})$ which are covariantly finite, coresolving, and whose
orthogonal complement $^{\bot }\mathscr Y$ has projective dimension $\leq n$.
\end{theorem}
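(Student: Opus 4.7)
The plan is to adapt the Auslander--Reiten--Buchweitz correspondence [4, Theo.\ 5.5] from artin algebras to the dualizing Krull--Schmidt setting. For the forward direction, let $\mathcal{T}$ be tilting with $\mathrm{pdim}\,\mathcal{T}\leq n$ and generating $\mathcal{T}^\bot$. I would first verify that $\mathcal{T}^\bot$ is coresolving: closure under extensions and cokernels of monomorphisms is a standard long exact $\mathrm{Ext}$-sequence argument that uses $\mathrm{pdim}\,\mathcal{T}\leq n$, and the injectives lie in $\mathcal{T}^\bot$ since $\mathrm{mod}(\mathcal{C})$ has enough injectives for dualizing $\mathcal{C}$. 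For covariant finiteness, axiom (iii) of a tilting category provides, for each $M\in\mathrm{mod}(\mathcal{C})$, a coresolution by objects of $\mathcal{T}$; a dimension-shift using the vanishing $\mathrm{Ext}^{>0}(\mathcal{T},\mathcal{T}^\bot)=0$ extracts a left $\mathcal{T}^\bot$-approximation $M\to Y$. Finally, $\mathrm{pdim}\,{}^\bot(\mathcal{T}^\bot)\leq n$ follows because $\mathcal{T}\subset{}^\bot(\mathcal{T}^\bot)$ generates the latter.

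For the reverse direction, given $\mathscr{Y}$ covariantly finite, coresolving, with $\mathrm{pdim}\,{}^\bot\mathscr{Y}\leq n$, set $\mathcal{T}:=\mathscr{Y}\cap{}^\bot\mathscr{Y}$. The tilting axioms $\mathrm{pdim}\,\mathcal{T}\leq n$ and $\mathrm{Ext}^{i>0}(\mathcal{T},\mathcal{T})=0$ are immediate from the two defining inclusions. The coresolution axiom (iii) is obtained iteratively: starting from $(-,C)$, take a left $\mathscr{Y}$-approximation $(-,C)\to Y^0$ and invoke a Wakamatsu-type lemma---the coresolving hypothesis combined with the Ext-vanishing forces the cokernel into ${}^\bot\mathscr{Y}$, so $Y^0\in\mathcal{T}$; then iterate on the cokernel, with termination in at most $n$ steps guaranteed by the projective-dimension bound on ${}^\bot\mathscr{Y}$.

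To conclude, the two assignments are mutually inverse. On the one hand, $\mathcal{T}=\mathcal{T}^\bot\cap{}^\bot(\mathcal{T}^\bot)$ because $\mathcal{T}$ is contained in both sides, and any $X$ in the intersection admits a tilting coresolution whose successive cokernels collapse to show $X\in\mathcal{T}$ via a Krull--Schmidt summand argument; on the other, $\mathscr{Y}=(\mathscr{Y}\cap{}^\bot\mathscr{Y})^\bot$ follows by comparing the iterated-approximation construction with the Ext-vanishing. The main obstacle is the Wakamatsu-type lemma in the functor-category setting: the artin algebra version relies essentially on Hom-finiteness and module-theoretic duality, and transporting the argument requires the full dualizing Krull--Schmidt structure, together with the existence of projective covers and of pseudokernels in $\mathcal{C}$, to ensure the approximations are well-defined and that the induced short exact sequences of Ext groups have the vanishing needed to propagate the induction.
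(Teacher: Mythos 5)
The paper offers no proof of this statement for you to be compared against: immediately before the theorem it says ``The following theorem was proved in [18]. [See also 4 Theo. 5.5]'', and the present article only records the result before applying it to $\mathrm{maps}(\mathrm{mod}\Lambda)$. So your proposal has to be judged on its own terms, as a reconstruction of the argument of [18] and of Auslander--Reiten.

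Your sketch follows the expected Auslander--Reiten/Auslander--Buchweitz route and names the right ingredients, but the two genuinely hard steps are asserted rather than proved. First, covariant finiteness of $\mathcal{T}^{\bot}$: axiom (iii) of a tilting subcategory only supplies finite $\mathcal{T}$-coresolutions of the representable (hence projective) functors $(-,C)$, not of an arbitrary $M\in\mathrm{mod}(\mathcal{C})$, so ``a dimension-shift extracts a left $\mathcal{T}^{\bot}$-approximation of $M$'' is not yet an argument. What is actually needed is the Auslander--Buchweitz special approximation $0\to M\to Y_M\to X_M\to 0$ with $Y_M\in\mathcal{T}^{\bot}$ and $X_M\in\widehat{\mathcal{T}}$, built by induction along a finite $\mathcal{T}$-resolution of a syzygy of $M$; this construction is where most of the work in [4] and [18] lies. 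Second, the claim that $\mathrm{pdim}\,{}^{\bot}(\mathcal{T}^{\bot})\le n$ ``follows because $\mathcal{T}$ generates'' is not a proof: one must identify ${}^{\bot}(\mathcal{T}^{\bot})$ with $\widehat{\mathcal{T}}$ (which again uses the special approximations) and then bound the projective dimension of objects admitting a finite $\mathcal{T}$-resolution, which does not follow formally from $\mathrm{pdim}\,\mathcal{T}\le n$ and a long exact sequence. By contrast, your reverse direction is essentially right: Wakamatsu's lemma plus closure of ${}^{\bot}\mathscr{Y}$ under extensions gives $Y^{0}\in\mathscr{Y}\cap{}^{\bot}\mathscr{Y}$, and termination after $n$ steps follows because $\mathrm{Ext}^{1}(C^{n},C^{n+1})\cong\mathrm{Ext}^{n+1}(C^{0},C^{n+1})=0$ forces the last short exact sequence to split; you should still make explicit that the left $\mathscr{Y}$-approximation of $(-,C)$ is a monomorphism because $\mathscr{Y}$ contains the injectives. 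As it stands the proposal is a correct road map with the two hardest legs left untraveled.
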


Of course, the dual of the above theorem is true. Hence it is clear the
importance of studying; covariantly, contravariantly and functorially finite
subcategories in $\mathrm{mod}(\mathcal{C})$. We are specially interested in
the case $\mathcal{C}$ is the category of finitely generated left modules
over an artin algebra $\Lambda .$ In this situation we can study them via
the functor $\Psi $ relating them with the corresponding subcategories of $%
\mathrm{maps}(\mathrm{mod}\Lambda )$, which in principle are easier to
study, since $\mathrm{maps}(\mathrm{mod}\Lambda )$ and the category of
finitely generated left $\Gamma $- modules, with $\Gamma $ the triangular
matrix ring, are equivalent.

Such is the content of our next theorem.

\begin{theorem}
Let $\mathscr C\subset \mathrm{maps}(\mathrm{mod}\Lambda )$ be a
subcategory. Then the following statements hold:

\begin{itemize}
\item[(a)] If $\mathscr C$ is contravariantly finite in $\mathrm{maps}(%
\mathrm{mod}\Lambda )$, then $\varPhi(\mathscr C)$ is a contravariantly
finite subcategory of $\mathrm{mod}(\mathrm{mod}\Lambda )$.

\item[(b)] If $\mathscr C$ is covariantly finite in $\mathrm{maps}(\mathrm{%
mod}\Lambda )$, then $\varPhi(\mathscr C)$ is a covariantly finite
subcategory of $\mathrm{mod}(\mathrm{mod}\Lambda )$.

\item[(c)] If $\mathscr C$ is functorially finite in $\mathrm{maps}(\mathrm{%
mod}\Lambda )$, then $\varPhi(\mathscr C)$ is a functorially finite
subcategory of $\mathrm{mod}(\mathrm{mod}\Lambda )$.
\end{itemize}
\end{theorem}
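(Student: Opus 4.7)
The plan is to exploit two features of the functor $\varPhi:\mathrm{maps}(\mathrm{mod}\Lambda)\to\mathrm{mod}(\mathrm{mod}\Lambda)$: it is dense on objects (every finitely presented functor has a projective presentation, which realises it as $\varPhi$ of a map), and it factors through the quotient $\underline{\varPhi}:\underline{\mathrm{maps}}(\mathrm{mod}\Lambda)\to\mathrm{mod}(\mathrm{mod}\Lambda)$, which is an equivalence. The second fact means any morphism in $\mathrm{mod}(\mathrm{mod}\Lambda)$ lifts, uniquely up to homotopy, to a morphism in $\mathrm{maps}(\mathrm{mod}\Lambda)$. With these two tools, approximations in $\mathrm{maps}$ transport to approximations in $\mathrm{mod}(\mathrm{mod}\Lambda)$.

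For part (a), I would fix $M\in\mathrm{mod}(\mathrm{mod}\Lambda)$ and use density of $\varPhi$ to write $M=\varPhi(T)$ for some $T\in\mathrm{maps}(\mathrm{mod}\Lambda)$. By the hypothesis on $\mathscr C$, choose a right $\mathscr C$-approximation $h:C\to T$ with $C\in\mathscr C$. I claim $\varPhi(h):\varPhi(C)\to M$ is a right $\varPhi(\mathscr C)$-approximation. To verify this, take any morphism $\alpha:\varPhi(C')\to M$ with $C'\in\mathscr C$. Since $\underline{\varPhi}$ is fully faithful, $\alpha$ corresponds to a unique $\underline\alpha:\underline{C'}\to\underline{T}$ in $\underline{\mathrm{maps}}$; pick any lift $\alpha':C'\to T$ in $\mathrm{maps}(\mathrm{mod}\Lambda)$, so that $\varPhi(\alpha')=\alpha$. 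The right approximation property of $h$ yields $\beta':C'\to C$ in $\mathrm{maps}(\mathrm{mod}\Lambda)$ with $\alpha'=h\beta'$, and applying $\varPhi$ produces $\alpha=\varPhi(h)\varPhi(\beta')$ with $\varPhi(\beta'):\varPhi(C')\to\varPhi(C)$ the required factorisation.

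Part (b) is entirely dual: given $M=\varPhi(T)$ and a left $\mathscr C$-approximation $h:T\to C$, the same lifting argument shows that $\varPhi(h):M\to\varPhi(C)$ is a left $\varPhi(\mathscr C)$-approximation. Part (c) is then immediate by combining (a) and (b). If one insists that the target subcategories be closed under isomorphisms and summands (as in the definition recalled in the subsection on contravariantly finite categories), one replaces $\varPhi(\mathscr C)$ by its closure under these operations, which does not affect the approximation property just verified.

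The main subtlety, and the step that needs care, is the lifting of $\alpha$ to $\alpha'$: one must distinguish morphisms in $\mathrm{maps}(\mathrm{mod}\Lambda)$ from morphisms in $\underline{\mathrm{maps}}(\mathrm{mod}\Lambda)$, and note that the equality $\alpha=\varPhi(h)\varPhi(\beta')$ holds on the nose in $\mathrm{mod}(\mathrm{mod}\Lambda)$ even though $\alpha'=h\beta'$ is only required modulo homotopy, because $\varPhi$ kills homotopies. Once this point is handled, everything else is formal, and the proof really reduces to the statement that $\varPhi$ is dense and the induced $\underline{\varPhi}$ is an equivalence, facts already established in the previous section.
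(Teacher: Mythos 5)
Your proposal is correct and follows essentially the same route as the paper: realize $F$ as $\varPhi$ of a map object, take a right (resp.\ left) $\mathscr C$-approximation there, apply $\varPhi$, and verify the approximation property by lifting a test morphism back to $\mathrm{maps}(\mathrm{mod}\Lambda)$. The only difference is presentational: where you invoke the equivalence $\underline{\varPhi}$ to produce the lift abstractly, the paper carries out the same lifting explicitly via minimal projective presentations, Yoneda's lemma, and a homotopy argument.
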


\begin{proof}
(a) Assume $\mathscr C\subset \mathrm{maps}(\mathrm{mod}\Lambda )$ is
contravariantly finite. Let $F$ be a functor in $\mathrm{mod}(\mathrm{mod}\Lambda ))$
and $(\;,M_{1})\xrightarrow{(\;,f)}(\;,M_{2})\rightarrow F\rightarrow 0$ a
minimal projective presentation of $F$. Then, there exist a map $Z:Z_{1}%
\xrightarrow{h}Z_{2}$ and a map
\begin{equation*}
\begin{diagram}\dgARROWLENGTH=1em\label{contramaps1}
\node{Z_1}\arrow{e,t}{h}\arrow{s,l}{q_1} \node{Z_2}\arrow{s,l}{q_2}\\
\node{M_1}\arrow{e,t}{f} \node{M_2} \end{diagram}
\end{equation*}

in $\mathrm{maps}(\mathrm{mod}\Lambda )$ such that $Z=(Z_{1},Z_{2},h)$ is a
right $\mathscr C$-approximation of $M=(M_{1},M_{2},f)$. The diagram (\ref%
{contramaps1}) induce the following commutative exact diagram:
\begin{equation*}
\begin{diagram}\dgARROWLENGTH=1em
\node{(\;,Z_1)}\arrow{e,t}{(\;,h)}\arrow{s,l}{(\;,q_1)}
\node{(\;,Z_2)}\arrow{e,t}{}\arrow{s,l}{(\;,q_2)}
\node{\varPhi(Z)}\arrow{e}\arrow{s,l}{\rho} \node{0}\\
\node{(\;,M_1)}\arrow{e,t}{(\;,f)} \node{(\;,M_2)}\arrow{e,t}{}
\node{F}\arrow{e} \node{0} \end{diagram}
\end{equation*}

We claim that $\rho $ is a right $\varPhi(\mathscr{C})$-approximation of $F$%
. Let $H\in \varPhi(\mathscr C)$, $\eta :H\rightarrow F$ a map and $%
(\;,X_{1})\xrightarrow{(\;,r)}(\;,X_{2})\rightarrow H\rightarrow 0$ a
minimal projective presentation of $H$. We have a lifting of $\eta :$
\begin{equation*}
\begin{diagram}\dgARROWLENGTH=1em
\node{(\;,X_1)}\arrow{e,t}{(\;,r)}\arrow{s,l}{(\;,s_1)}
\node{(\;,X_2)}\arrow{e,t}{}\arrow{s,l}{(\;,s_2)}
\node{H}\arrow{e}\arrow{s,l}{\eta} \node{0}\\
\node{(\;,M_1)}\arrow{e,t}{(\;,f)} \node{(\;,M_2)}\arrow{e,t}{}
\node{F}\arrow{e} \node{0} \end{diagram}
\end{equation*}

By Yoneda's Lemma, there is the following commutative square:
\begin{equation*}
\begin{diagram}\dgARROWLENGTH=1em \node{X_1}\arrow{e,t}{r}\arrow{s,l}{s_1}
\node{X_2}\arrow{s,l}{s_2}\\ \node{M_1}\arrow{e,t}{f} \node{M_2}
\end{diagram}
\end{equation*}%
with $X=(X_{1},X_{2},r)\in \mathscr C$. Since $Z=(Z_{1},Z_{2},h)$ is a right
$\mathscr C$-approximation of $M=(M_{1},M_{2},f)$, there exists a morphism $%
(t_{1},t_{2}):(X_{1},X_{2},r)\rightarrow (Z_{1},Z_{2},h)$, such that the
following diagram:
\begin{equation*}
\begin{diagram}\dgARROWLENGTH=1em \node{X_1}\arrow{e,t}{r}\arrow{s,l}{t_1}
\node{X_2}\arrow{s,l}{t_2}\\ \node{Z_1}\arrow{e,t}{h}\arrow{s,l}{q_1}
\node{Z_2}\arrow{s,l}{q_2}\\ \node{M_1}\arrow{e,t}{f} \node{M_2}
\end{diagram}
\end{equation*}%
is commutative, with $q_{i}t_{i}=s_{i}$ for $i=1,2$. Which implies the
existence of a map $\theta :H\rightarrow \Psi (H)=G$, such that the diagram
\begin{equation*}
\begin{diagram}\dgARROWLENGTH=1em
\node{(\;,X_1)}\arrow{e,t}{(\;,r)}\arrow{s,l}{(\;,t_1)}
\node{(\;,X_2)}\arrow{e}\arrow{s,l}{(\;,t_2)}
\node{H}\arrow{e}\arrow{s,l}{\theta} \node{0}\\
\node{(\;,Z_1)}\arrow{e,t}{(\;,h)}\arrow{s,l}{(\;,q_1)}
\node{(\;,Z_2)}\arrow{e}\arrow{s,l}{(\;,q_2)}
\node{G}\arrow{e}\arrow{s,l}{\rho} \node{0}\\
\node{(\;,M_1)}\arrow{e,t}{(\;,f)} \node{(\;,M_2)}\arrow{e}
\node{F}\arrow{e} \node{0} \end{diagram}
\end{equation*}%
with exact raws, is commutative, this is: $\rho \theta =\eta $.

The proof of (b) is dual to (a), and (c) follows from (a) and (b).
\end{proof}

\begin{theorem}
Let $\mathscr{C}\subset \mathrm{maps}(\mathrm{mod}\Lambda )$ be a category
which contains the objects of the form $(M,0,0)$, $(M,M,1_{M})$, and assume $%
\varPhi(\mathscr{C})$ is contravariantly finite. Then $\mathscr{C}$ is
contravariantly finite.
\end{theorem}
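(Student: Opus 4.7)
The plan is to show that for any $Y=(g,Y_1,Y_0)$ in $\mathrm{maps}(\mathrm{mod}\Lambda)$, a right $\mathscr{C}$-approximation can be assembled from three pieces: a lift of a right $\varPhi(\mathscr{C})$-approximation of $\varPhi(Y)$, together with two auxiliary summands coming from the families $(M,0,0)$ and $(M,M,1_M)$ that $\mathscr{C}$ contains. These auxiliary summands will absorb the homotopy ambiguity arising from the fact that the equivalence $\underline{\varPhi}$ only identifies morphisms up to homotopy.

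First, using the hypothesis that $\varPhi(\mathscr{C})$ is contravariantly finite, I would choose a right $\varPhi(\mathscr{C})$-approximation $\rho:\varPhi(Z_0)\to \varPhi(Y)$ with $Z_0\in\mathscr{C}$ and lift it to $\tilde{\rho}:Z_0\to Y$; this lift exists because $\varPhi$ is full, $\underline{\varPhi}$ being an equivalence. Letting $\iota:K\hookrightarrow Y_1$ be the inclusion of the kernel $K=\ker g$ (which exists in $\mathrm{mod}\Lambda$), I would set
\begin{equation*}
Z \;=\; Z_0\oplus (K,0,0)\oplus (Y_1,Y_1,1_{Y_1}),
\end{equation*}
with all three summands in $\mathscr{C}$ by hypothesis, and form the candidate $\Psi=(\tilde{\rho},(\iota,0),(1_{Y_1},g)):Z\to Y$.

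The key step is then to verify that every morphism $\alpha:X\to Y$ with $X=(f,X_1,X_0)\in\mathscr{C}$ factors through $\Psi$. Factoring $\varPhi(\alpha)$ through $\rho$ and lifting back to $\mathrm{maps}(\mathrm{mod}\Lambda)$ yields $\tilde{\beta}:X\to Z_0$ with $\varPhi(\alpha-\tilde{\rho}\tilde{\beta})=0$; faithfulness of $\underline{\varPhi}$ then produces a homotopy $s:X_0\to Y_1$ with $(\alpha-\tilde{\rho}\tilde{\beta})_0=gs$. Writing the difference as $(d_1,gs)$, the commutativity condition $g d_1=(gs)f$ forces $g(d_1-sf)=0$, so $d_1-sf=\iota k$ for a unique $k:X_1\to K$, yielding the decomposition
\begin{equation*}
\alpha \;=\; \tilde{\rho}\tilde{\beta}\;+\;(\iota,0)\circ(k,0)\;+\;(1_{Y_1},g)\circ(sf,s),
\end{equation*}
where $(k,0):X\to(K,0,0)$ and $(sf,s):X\to(Y_1,Y_1,1_{Y_1})$ are straightforwardly valid morphisms, since the relevant squares collapse to $0\cdot k=0\cdot f$ and $1_{Y_1}\cdot(sf)=s\cdot f$.

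The main obstacle I anticipate is precisely this bookkeeping: the homotopy correction must be cleanly split into a piece living in $Y_1$ itself (absorbed by the summand $(Y_1,Y_1,1_{Y_1})$) and a residual piece in $\ker g$ (absorbed by the summand $(K,0,0)$). The hypothesis that $\mathscr{C}$ contains both the family $(M,M,1_M)$ and the family $(M,0,0)$ is calibrated exactly so that these two contributions land inside $\mathscr{C}$; the identification $\underline{\varPhi}\underline{\varPsi}\simeq \underline{\mathrm{id}}$ guarantees that no further correction is needed.
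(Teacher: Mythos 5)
Your argument is correct and is essentially the paper's own proof: the paper also builds the right $\mathscr{C}$-approximation of $Y_1\xrightarrow{g}Y_0$ as $Z_0\oplus(\ker g,0,0)\oplus(Y_1,Y_1,1_{Y_1})$, lifting a right $\varPhi(\mathscr{C})$-approximation and absorbing the homotopy discrepancy into the two auxiliary summands exactly as you do (your $s$ and $k$ are the paper's $\lambda_2$ and $\lambda_1$). The only cosmetic difference is that the paper extracts the $\ker g$-component from a three-term chain homotopy between two liftings of $\rho$, whereas you obtain it directly from the commutativity relation $g(d_1-sf)=0$; both arguments are equivalent.
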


\begin{proof}
Let $M_{1}\xrightarrow{f}M_{2}$ a map in $\mathrm{maps}(\mathrm{mod}\Lambda
) $, then we have an exact sequence $(\;,M_{1})\xrightarrow{(\;,f)}%
(\;,M_{2})\rightarrow F\rightarrow 0$.

There exist $G\in \varPhi(\mathcal{C})$ such that $\rho :G\rightarrow F$ is
a right $\varPhi(\mathscr{ C})$-approximation.

Let
\begin{equation*}
(\;,Z_{1})\xrightarrow{(\;,h)}(\;,Z_{2})\rightarrow G\rightarrow 0
\end{equation*}%
be a minimal projective presentation of $G$. Then, there exists a map $%
(r_{1},r_{2}):(Z_{1},Z_{2},h)\rightarrow (M_{1},M_{2},f)$ such that%
\begin{equation*}
\begin{diagram}\dgARROWLENGTH=1em
\node{(\;,Z_1)}\arrow{e,t}{(\;,h)}\arrow{s,l}{(\;,r_1)}
\node{(\;,Z_2)}\arrow{e}\arrow{s,l}{(\;,r_2)}
\node{G}\arrow{e}\arrow{s,l}{\rho} \node{0}\\
\node{(\;,M_1)}\arrow{e,t}{(\;,f)} \node{(\;,M_2)}\arrow{e}
\node{F}\arrow{e} \node{0} \end{diagram}
\end{equation*}%
is a lifting of $\rho $.

Let $(X_{1},X_{2},g)$ be an object in $\mathscr{C}$ and a map $(v_1,v_2):(X_{1},X_{2},g)\rightarrow (M_1,M_2,f)$,
which \ induces \ the \ following \ commutative exact diagram in\ \ $\mathrm{mod}%
(\mathrm{mod}\Lambda):$
\begin{equation*}
\begin{diagram}\dgARROWLENGTH=1em
\node{(\;,X_1)}\arrow{e,t}{(\;,g)}\arrow{s,l}{(\;,v_1)}
\node{(\;,X_2)}\arrow{s,l}{(\;,v_2)}\arrow{e}
\node{H}\arrow{e}\arrow{s,l}{\eta} \node{0} \\
\node{(\;,M_1)}\arrow{e,t}{(\;,f)} \node{(\;,M_2)}\arrow{e}
\node{F}\arrow{e} \node{0} \end{diagram}
\end{equation*}

Since $\rho :G\rightarrow F$ is a right $\varPhi(\mathscr{C})$%
-approximation, there exists a morphism $\theta :H\rightarrow G$ such taht $%
\rho \theta =\eta $. Therefore: $\theta $ induces a morphism
\begin{equation*}
\begin{diagram}\dgARROWLENGTH=1em \node{X_1}\arrow{e,t}{g}\arrow{s,l}{t_1}
\node{X_2}\arrow{s,l}{t_2}\\ \node{Z_1}\arrow{e,t}{h} \node{Z_2}
\end{diagram}
\end{equation*}
in $\mathrm{maps}(\mathcal{C})$ such that $\varPhi(t_{1},t_{2})=\theta $.

We have two liftings of $\rho :$
\begin{equation*}
\begin{diagram}\dgARROWLENGTH=1.5em \node{0}\arrow{e}
\node{(\;,X_0)}\arrow{e,t}{g_0}\arrow{s,l}{(\;,v_0)}\arrow{s,r}{(\;,r_0t_0)}
\node{(\;,X_1)}\arrow{e,t}{(\;,g)}\arrow{s,l}{(\;,v_1)}\arrow{s,r}{(%
\;,r_1t_1)}
\node{(\;,X_2)}\arrow{s,l}{(\;,v_2)}\arrow{e}\arrow{s,r}{(\;,r_2t_2)}
\node{H}\arrow{e}\arrow{s,l}{\rho} \node{0} \\ \node{0}\arrow{e}
\node{(\;,M_0)}\arrow{e,t}{f_0} \node{(\;,M_1)}\arrow{e,t}{(\;,f)}
\node{(\;,M_2)}\arrow{e} \node{F}\arrow{e} \node{0} \end{diagram}
\end{equation*}%
Then they are homotopic, and there exist maps $(\;,\lambda
_{1}):(\;,X_{1})\rightarrow (\;,M_{0})$ and $(\;,\lambda
_{2}):(\;,X_{2})\rightarrow (\;,M_{1})$, such that
\begin{eqnarray*}
v_{1} &=&r_{1}t_{1}+f_{0}\lambda _{1}+\lambda _{2}g, \\
v_{2} &=&r_{2}t_{2}+f\lambda _{2}.
\end{eqnarray*}

We have the following commutative diagram:
\begin{equation*}
\begin{diagram}\dgARROWLENGTH=1em \node{X_1}\arrow{e,t}{g}\arrow{s,l}{m_1}
\node{X_2}\arrow{s,l}{m_2}\\ \node{Z_1\coprod M_0\coprod
M_1}\arrow{e,t}{w}\arrow{s,l}{n_1} \node{Z_2\coprod M_1}\arrow{s,l}{n_2}\\
\node{M_1}\arrow{e,t}{f} \node{M_2} \end{diagram}
\end{equation*}%
with morphisms $n_{1}=(r_{1}\;f_{0}\ 1_{M_{1}}),\;n_{2}=(r_{2}\;f_{2})$ and
\begin{equation*}
m_{1}=\left(
\begin{array}{c}
t_{1} \\
\lambda _{1} \\
\lambda _{2}g%
\end{array}%
\right) ,m_{2}=\left(
\begin{array}{c}
t_{2} \\
\lambda _{2}%
\end{array}%
\right) ,w=\left(
\begin{array}{ccc}
h & 0 & 0 \\
0 & 0 & 1_{M_{1}}%
\end{array}%
\right) .
\end{equation*}%
But $w:Z_{1}\coprod M_{0}\coprod M_{1}\rightarrow Z_{2}\coprod M_{1}$ is in $%
\mathscr C,$ and
\begin{equation*}
\begin{diagram}\dgARROWLENGTH=1em \node{Z_1\coprod M_0\coprod
M_1}\arrow{e,t}{w}\arrow{s,l}{n_1} \node{Z_2\coprod M_1}\arrow{s,l}{n_2}\\
\node{M_1}\arrow{e,t}{f} \node{M_2} \end{diagram}
\end{equation*}%
is a right $\mathscr{C}$-approximation of $M_{1}\xrightarrow{f}M_{2}$.
\end{proof}

We can define the functor $\varPhi^{op}:\mathrm{maps}(\mathrm{mod}\Lambda
)\rightarrow \mathrm{mod(}(\mathrm{mod}\Lambda \mathcal{)}^{op})$ as:
\begin{equation*}
\varPhi^{op}(A_{1}\xrightarrow{f}A_{0})=\mathrm{Coker}((A_{0},-)%
\xrightarrow{(f,-)}(A_{1},-))
\end{equation*}%
We have the following dual of the above theorem, whose proof we leave to the
reader:

\begin{theorem}
Let $\mathscr{C}\subset \mathrm{maps}(\mathrm{mod}\Lambda )$ be a
subcategory that contains the objects of the form $(0,M,0)$ and $(M,M,1_{M})$%
. If $\varPhi^{op}(\mathscr C)$ is contravariantly finite, then $\mathscr{C}$
is covariantly finite.
\end{theorem}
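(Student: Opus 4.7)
The plan is to obtain this statement as the formal dual of the immediately preceding theorem, by transporting everything through the natural equivalence
\[
\mathrm{maps}(\mathrm{mod}\Lambda)^{op}\simeq \mathrm{maps}((\mathrm{mod}\Lambda)^{op})
\]
that sends an object $(A_{1}\xrightarrow{f}A_{0})$ to the same arrow viewed as a morphism $A_{0}\to A_{1}$ in $(\mathrm{mod}\Lambda)^{op}$. Since $\Lambda$ is an artin algebra, the standard duality $D$ identifies $(\mathrm{mod}\Lambda)^{op}$ with $\mathrm{mod}\Lambda^{op}$, so the preceding theorem is available with $\Lambda$ replaced by $\Lambda^{op}$.

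Under this opposite equivalence, the objects $(0,M,0)$ and $(M,M,1_{M})$ of $\mathrm{maps}(\mathrm{mod}\Lambda)$ correspond respectively to $(M,0,0)$ and $(M,M,1_{M})$ in $\mathrm{maps}((\mathrm{mod}\Lambda)^{op})$, and covariantly finite subcategories become contravariantly finite ones. Moreover, the defining formula
\[
\varPhi^{op}(A_{1}\xrightarrow{f}A_{0})=\mathrm{Coker}\bigl((A_{0},-)\xrightarrow{(f,-)}(A_{1},-)\bigr)
\]
shows that $\varPhi^{op}$ coincides, after this transport, with the functor $\varPhi$ built for the category $(\mathrm{mod}\Lambda)^{op}$, because the covariant representables $(A_{i},-)=\mathrm{Hom}_{\mathrm{mod}\Lambda}(A_{i},-)$ are exactly the contravariant representables of $(\mathrm{mod}\Lambda)^{op}$ at $A_{i}$.

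With these identifications in hand, letting $\mathscr{C}^{op}$ denote the image of $\mathscr{C}$ under the equivalence, the hypothesis that $\varPhi^{op}(\mathscr{C})$ is contravariantly finite in $\mathrm{mod}((\mathrm{mod}\Lambda)^{op})$ becomes the statement that $\varPhi(\mathscr{C}^{op})$ is contravariantly finite in $\mathrm{mod}(\mathrm{mod}\Lambda^{op})$, and $\mathscr{C}^{op}$ contains the required special objects $(M,0,0)$ and $(M,M,1_{M})$. The preceding theorem then yields that $\mathscr{C}^{op}$ is contravariantly finite in $\mathrm{maps}(\mathrm{mod}\Lambda^{op})$, which pulls back to $\mathscr{C}$ being covariantly finite in $\mathrm{maps}(\mathrm{mod}\Lambda)$. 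The one delicate step, and the place I would write things out in detail, is checking that the opposite-category identifications genuinely swap left and right approximations in the required way: concretely, that the right $\varPhi(\mathscr{C}^{op})$-approximation produced by the preceding theorem dualizes back to a left $\mathscr{C}$-approximation of the given map in $\mathrm{maps}(\mathrm{mod}\Lambda)$. Once this bookkeeping is confirmed, the conclusion is automatic; alternatively, one can simply repeat the proof of the preceding theorem verbatim with every arrow reversed, replacing projective presentations by injective copresentations of $\varPhi^{op}(M)$ and assembling the left $\mathscr{C}$-approximation from the coproduct of a $\varPhi^{op}(\mathscr{C})$-approximation together with the objects $(0,M,0)$ and $(M,M,1_{M})$ that come from the hypothesis on $\mathscr{C}$.
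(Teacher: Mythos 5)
Your proposal is correct and matches the paper's intent exactly: the paper states this result as the formal dual of the preceding theorem and leaves the proof to the reader, and your argument --- transporting through $\mathrm{maps}(\mathrm{mod}\Lambda )^{op}\simeq \mathrm{maps}((\mathrm{mod}\Lambda )^{op})\simeq \mathrm{maps}(\mathrm{mod}\Lambda ^{op})$, under which $(0,M,0)$ and $(M,M,1_{M})$ become $(M,0,0)$ and $(M,M,1_{M})$, $\varPhi^{op}$ becomes the $\varPhi$ of the opposite category, and left approximations become right approximations --- is precisely that dualization, carried out with the right bookkeeping.
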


\begin{definition}
The subcategory $\mathscr{C}$ of $\mathrm{maps}(\mathrm{mod}\Lambda )$
consisting\ \ of \ \ all \ \ maps $(M_{1},M_{2},f)$, such that $f$ is an epimorphism,
will be called the category of epimaps, $\mathrm{epimaps}(\mathrm{mod}%
\Lambda )$. Dually, the subcategory $\mathscr{C}$ of $\mathrm{maps}(\mathrm{%
mod}\Lambda )$ consisting of all maps $(M_{1},M_{2},f)$, such that\ \ $f$ is a
monomorphism, will \ \ be called \ \ the \ \ category of monomaps, $\mathrm{monomaps}(%
\mathrm{mod}\Lambda ).$
\end{definition}

We have the following examples of functorially finite subcategories of the
category $%
\mathrm{maps}(\mathrm{mod}\Lambda )$:

\begin{proposition}
The categories $\mathrm{epimaps}(\mathrm{mod}\Lambda )$ and $\mathrm{monomaps%
}(\mathrm{mod}\Lambda )$ are functorially finite in $\mathrm{maps}(\mathrm{mod%
}\Lambda ).$
\end{proposition}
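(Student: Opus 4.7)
The plan is to construct explicit right and left approximations for both $\mathrm{epimaps}(\mathrm{mod}\Lambda)$ and $\mathrm{monomaps}(\mathrm{mod}\Lambda)$, treating the epimap case in detail and then obtaining the monomap case by dualizing the constructions (sources are replaced by targets, projective covers by injective envelopes). Note that both subcategories are closed under direct summands and isomorphisms, as needed for the approximation definition, because a direct sum of $\Lambda$-morphisms is epi (respectively mono) if and only if each summand is.

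For a right $\mathrm{epimaps}$-approximation of $(M_1,M_2,f)$, I would use the image factorization $f=\iota\circ f'$, with $f':M_1\twoheadrightarrow\mathrm{Im}(f)$ and $\iota:\mathrm{Im}(f)\hookrightarrow M_2$. The object $(M_1,\mathrm{Im}(f),f')$ is an epimap by construction, and $(1_{M_1},\iota):(M_1,\mathrm{Im}(f),f')\to(M_1,M_2,f)$ is a morphism in $\mathrm{maps}(\mathrm{mod}\Lambda)$. Given any $(h_1,h_2):(E_1,E_2,e)\to(M_1,M_2,f)$ with $e$ epi, commutativity of the defining square forces $h_2(E_2)=h_2(e(E_1))=f(h_1(E_1))\subseteq\mathrm{Im}(f)$, so $h_2$ factors uniquely as $\iota\circ h_2'$; a short diagram chase shows $(h_1,h_2')$ is a morphism into $(M_1,\mathrm{Im}(f),f')$ whose composite with $(1_{M_1},\iota)$ is $(h_1,h_2)$.

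For a left $\mathrm{epimaps}$-approximation, the naive guess $(M_1\oplus M_2,M_2,(f,1_{M_2}))$ fails, because verifying the universal property would require lifting an arbitrary $M_2\to X_2$ through an arbitrary epimorphism $e:X_1\twoheadrightarrow X_2$. I would fix this by using a projective cover $\pi:P\twoheadrightarrow M_2$ in $\mathrm{mod}\Lambda$ and taking $(M_1\oplus P,M_2,(f,\pi))$ together with the morphism $(\iota_1,1_{M_2})$ from $(M_1,M_2,f)$. Given $(h_1,h_2):(M_1,M_2,f)\to(X_1,X_2,e)$ with $e$ epi, projectivity of $P$ furnishes a lift $\sigma:P\to X_1$ with $e\sigma=h_2\pi$, and setting $k_1=(h_1,\sigma)$ on $M_1\oplus P$ and $k_2=h_2$ produces the required factorization.

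The monomap case mirrors these constructions. A left $\mathrm{monomaps}$-approximation of $(M_1,M_2,f)$ is $(M_1/\ker f,M_2,\bar f)$ with the morphism $(\pi,1_{M_2})$, since any morphism from $(M_1,M_2,f)$ to a monomap must annihilate $\ker f$ on the first coordinate (using that $e$ is mono). A right $\mathrm{monomaps}$-approximation is obtained dually to the left epi case: fix an injective envelope $j:M_1\hookrightarrow I$ in $\mathrm{mod}\Lambda$, form $(M_1,M_2\oplus I,\binom{f}{j})$ (which is a monomap because $j$ is), and take the morphism $(1_{M_1},\pi_1)$ to $(M_1,M_2,f)$; injectivity of $I$ is exactly the input needed to extend $j\circ k_1:X_1\to I$ along a mono $e:X_1\to X_2$ to a map $s:X_2\to I$ completing the factorization of a given $(k_1,k_2):(X_1,X_2,e)\to(M_1,M_2,f)$.

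The main obstacle is the pair of approximations whose naive candidate fails, namely the left epi and right mono cases: the essential idea is to introduce a projective (respectively injective) object from $\mathrm{mod}\Lambda$ to solve the lifting (respectively extension) problem that arises when verifying the universal property. Once this idea is in place, each verification reduces to a routine diagram chase in $\mathrm{maps}(\mathrm{mod}\Lambda)$.
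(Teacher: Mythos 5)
Your proposal is correct and follows essentially the same route as the paper: the right $\mathrm{epimaps}$-approximation via the image factorization $(M_1,\mathrm{Im}(f),f')$, the left one via a projective cover $P\twoheadrightarrow M_2$ yielding $(M_1\oplus P, M_2,(f\ p))$, with the monomap case obtained by duality (which the paper also leaves to the reader). The explicit dual constructions you supply for $\mathrm{monomaps}$ are a welcome addition but do not change the argument.
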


\begin{proof}
Let $M_{1}\xrightarrow{f}M_{2}$ be an object in $\mathrm{maps}(\mathrm{mod}%
\Lambda )$. Then we have the following right approximation
\begin{equation*}
\begin{diagram}\dgARROWLENGTH=1em \node{M_1}\arrow{e,t}{f'}\arrow{s,l,=}{}
\node{\mathrm{Im}(f)}\arrow{e,t}{} \arrow{s,l}{j} \node{0}\\
\node{M_1}\arrow{e,t}{f} \node{M_2} \end{diagram}
\end{equation*}

Consider an epimorphism $X_{2}\xrightarrow{g}X_{2}\rightarrow 0$ and a map
in $\mathrm{maps}(\mathrm{mod}\Lambda )$ $(t_{1},t_{2}):(X_{1},X_{2},g)%
\rightarrow (M_{1},M_{2},f)$. Then we have the following commutative
diagram:
\begin{equation*}
\begin{diagram}\dgARROWLENGTH=1em \node{X_1}\arrow{e,t}{g}\arrow{s,l}{t_1}
\node{X_2}\arrow{e,t}{} \arrow{s,l}{t_2} \node{0}\arrow{s} \node{}\\
\node{M_1}\arrow{e,t}{f} \node{M_2}\arrow{e,t}{\pi}
\node{\mathrm{Coker}(f)}\arrow{e} \node{0} \end{diagram}
\end{equation*}%
Since $\pi t_{2}=0$, the map $t_{2}:X_{2}\rightarrow M_{2}$ factors through $%
j:\mathrm{Im}(f)\rightarrow M_{2}$, this is: there is a map $%
u:X_{2}\rightarrow \mathrm{Im}(f)$ such that $ju=t_{2}$, and we have the
commutative diagram:
\begin{equation*}
\begin{diagram}\dgARROWLENGTH=1em \node{X_1}\arrow{e,t}{g}\arrow{s,l}{t_1}
\node{X_2}\arrow{e,t}{} \arrow{s,l}{u} \node{0}\\
\node{M_1}\arrow{e,t}{f'}\arrow{s,l,=}{}
\node{\mathrm{Im}(f)}\arrow{s,l}{j}\\ \node{M_1}\arrow{e,t}{f} \node{M_2}
\end{diagram}
\end{equation*}

Now, let $P\xrightarrow{p}M_{2}\rightarrow 0$ be the projective cover of $%
M_{2}$. We get a commutative exact diagram:
\begin{equation}
\begin{diagram}\label{epimap}
\dgARROWLENGTH=1em
\node{M_1}\arrow{e,t}{f}\arrow{s,l}{(^1_0)} \node{M_2}
\arrow{s,l}{1_{M_2}}\\ \node{M_1\oplus P}\arrow{e,t}{[f\; p]}	
\node{M_2}\arrow{e} \node{0} \end{diagram}
\end{equation}

Let $X_{1}\xrightarrow{g}X_{2}\rightarrow 0$ be an epimorphism and
\begin{equation*}
\begin{diagram}\dgARROWLENGTH=1em \node{M_1}\arrow{e,t}{f}\arrow{s,l}{s_1}
\node{M_2} \arrow{s,l}{s_2}\\ \node{X_1}\arrow{e,t}{g} \node{X_2}\arrow{e}
\node{0} \end{diagram}
\end{equation*}

a map of maps.

Since $P$ is projective, we have the following commutative square
\begin{equation*}
\begin{diagram}\dgARROWLENGTH=1em \node{P}\arrow{e,t}{p}\arrow{s,l}{\mu}
\node{M_2} \arrow{s,l}{s_2}\\ \node{X_1}\arrow{e,t}{g} \node{X_2}\arrow{e}
\node{0} \end{diagram}
\end{equation*}%
Then gluing the two squares:
\begin{equation*}
\begin{diagram}\dgARROWLENGTH=1em
\node{M_1}\arrow{e,t}{f}\arrow{s,l}{(^1_0)} \node{M_2}
\arrow{s,l,=}{}\\ \node{M_1\oplus P}\arrow{e,t}{[f\;p]}\arrow{s,l}{[s_1\;
\mu]} \node{M_2}\arrow{s,l}{s_2}\arrow{e,t}{} \node{0}\\
\node{X_1}\arrow{e,t}{g} \node{X_2}\arrow{e} \node{0} \end{diagram}
\end{equation*}

we obtain the map $(s_{1},s_{2})$.

Then (\ref{epimap}) is a left approximation. The second part is dual.
\end{proof}

\begin{corollary}

\begin{itemize}
\item[(i)] The category $\mathrm{mod(}\Lambda )^{O}$ of functors vanishing
on projectives is functorially finite.

\item[(ii)] The category $\hat{\mathrm{mod(}\Lambda )}$ of functors with $%
\mathrm{pd}\leq 1$ is functorially finite.
\end{itemize}
\end{corollary}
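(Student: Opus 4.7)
The plan is to realize $\mathrm{mod}(\Lambda)^{O}$ and $\hat{\mathrm{mod}}(\Lambda)$ as the images under $\varPhi$ of $\mathrm{epimaps}(\mathrm{mod}\Lambda)$ and $\mathrm{monomaps}(\mathrm{mod}\Lambda)$ respectively, and then invoke the preceding Proposition (which establishes that these two subcategories of $\mathrm{maps}(\mathrm{mod}\Lambda)$ are functorially finite) together with the earlier transfer Theorem asserting that if $\mathscr{C}\subset\mathrm{maps}(\mathrm{mod}\Lambda)$ is functorially finite then so is $\varPhi(\mathscr{C})\subset\mathrm{mod}(\mathrm{mod}\Lambda)$.

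For (i), I would unwind the definition $\varPhi(A_{1}\xrightarrow{f}A_{0})=\mathrm{Coker}((-,A_{1})\xrightarrow{(-,f)}(-,A_{0}))$ and evaluate at a projective $\Lambda$-module $P$: the value is $\mathrm{Coker}(P,f)$. If $f$ is an epimorphism then $(P,f)$ is surjective since $P$ is projective, so $\varPhi(f)(P)=0$. Conversely, for $F\in\mathrm{mod}(\Lambda)^{O}$ the minimal projective presentation $(-,A_{1})\xrightarrow{(-,f)}(-,A_{0})\to F\to 0$ satisfies $F(\Lambda)=0$, and Yoneda $(\Lambda,-)\cong\mathrm{id}$ forces $f$ to be an epimorphism. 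Hence the identification $\varPhi(\mathrm{epimaps}(\mathrm{mod}\Lambda))=\mathrm{mod}(\Lambda)^{O}$ holds, and functorial finiteness of the source transfers to the target.

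For (ii), I would start from a minimal projective presentation $(-,A_{1})\xrightarrow{(-,f)}(-,A_{0})\to F\to 0$ of an arbitrary $F$ in $\mathrm{mod}(\mathrm{mod}\Lambda)$. Then $\mathrm{pdim}\,F\leq 1$ is equivalent to $(-,f)$ being a monomorphism of functors, which by Yoneda is equivalent to $f\colon A_{1}\to A_{0}$ being a monomorphism in $\mathrm{mod}\Lambda$; so $\hat{\mathrm{mod}}(\Lambda)=\varPhi(\mathrm{monomaps}(\mathrm{mod}\Lambda))$. Again the transfer theorem yields the claim.

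The argument is essentially a direct application of prior results, so there is no serious obstacle. The only point requiring care is the two identifications above, which are routine Yoneda manipulations once one keeps track of what ``projectives'' means in $\mathrm{mod}(\mathrm{mod}\Lambda)$ versus $\mathrm{mod}\Lambda$, and that minimal projective presentations exist in our Krull--Schmidt setting so that monomorphism/epimorphism of the presenting map is well-defined on isomorphism classes of $F$.
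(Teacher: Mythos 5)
Your proposal is correct and follows the paper's own route exactly: the paper's proof consists precisely of the two identifications $\mathrm{mod}(\Lambda)^{O}=\varPhi(\mathrm{epimaps}(\mathrm{mod}\Lambda))$ and $\hat{\mathrm{mod}(\Lambda)}=\varPhi(\mathrm{monomaps}(\mathrm{mod}\Lambda))$, combined with the preceding proposition and the transfer theorem, which is exactly your plan. The Yoneda verifications you supply are details the paper omits, and they are sound.
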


\begin{proof}
The proof of this follows immediately from
\begin{equation*}
\mathrm{mod(}\Lambda )^{O}=\varPhi(\mathrm{epimaps}(\mathrm{mod}\Lambda ),%
\hat{\mathrm{mod(}\Lambda )}=\varPhi(\mathrm{monomaps}(\mathrm{mod}\Lambda ).
\end{equation*}
\end{proof}

In view of the previous theorem it is of special interest to characterize
the functors in $\mathrm{\mathrm{mod}}(\mathrm{mod}\Lambda )$ of projective
dimension less or equal to one.

The radical $t_{H}(F)$ of a finetly presented functor $F$, is defined as $%
t_{H}(F)=\underset{L\in \Theta }{\Sigma }L$, where $\Theta $ is the
collection of subfunctors of $F$ of finite length and with composition
factors the simple objects of the form $S_{M}$, with $M$ a non projective
indecomposable module.

\begin{definition}
Let $F$ be a finitely presented functor. Then $F$ is torsion free if and
only if $t_{H}(F)=0.$
\end{definition}

We end the paper with the following result, whose proof is essentially in
[19, Lemma 5.4.]

\begin{lemma}
Let $F$ be a functor in $\mathrm{\mathrm{mod}}(\mathrm{mod}\Lambda )$. Then $F$
has projective dimension less or equal to one, if and only if, $F$ is
torsion free.
\end{lemma}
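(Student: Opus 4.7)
The plan is to analyze the minimal projective presentation of $F$ and translate the condition $\mathrm{pd}\,F\leq 1$ into a monomorphism condition on an associated map in $\mathrm{mod}\,\Lambda$, then compare with the torsion-free condition via almost split sequences. Given a minimal presentation $(-,M_{1})\xrightarrow{(-,f)}(-,M_{0})\rightarrow F\rightarrow 0$, the sheaf kernel of $(-,f)$ is the representable $(-,K)$ with $K=\ker f$, which is automatically projective in $\mathrm{mod}(\mathrm{mod}\,\Lambda)$. Consequently, $\mathrm{pd}\,F\leq 2$ always, and $\mathrm{pd}\,F\leq 1$ if and only if $K=0$, i.e., $f$ is a monomorphism. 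This is consistent with the corollary above identifying $\hat{\mathrm{mod}}(\Lambda)$ with $\varPhi(\mathrm{monomaps}(\mathrm{mod}\,\Lambda))$.

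For the forward direction, suppose $f$ is a monomorphism and set $M_{3}=\mathrm{Coker}\,f$. The short exact sequence $0\rightarrow M_{1}\xrightarrow{f}M_{0}\rightarrow M_{3}\rightarrow 0$ induces $0\rightarrow(-,M_{1})\rightarrow(-,M_{0})\rightarrow(-,M_{3})$, hence $F\cong\mathrm{Im}((-,M_{0})\rightarrow(-,M_{3}))$ embeds as a subfunctor of the representable $(-,M_{3})$. It then suffices to prove every representable $(-,N)$ is torsion free. If $S_{L}\hookrightarrow(-,N)$ with $L$ a non-projective indecomposable, lifting along the projective cover $(-,L)\twoheadrightarrow S_{L}$ yields, by Yoneda, a nonzero $g:L\rightarrow N$ satisfying $gh=0$ for every $h\in\mathrm{rad}(X,L)$ and every $X$. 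Taking $h=\pi$ from the almost split sequence $0\rightarrow\tau L\rightarrow E\xrightarrow{\pi}L\rightarrow 0$, which is both in the radical and an epimorphism, forces $g\pi=0$ and hence $g=0$, a contradiction. Subfunctors of torsion-free functors are torsion free, so $F$ is torsion free.

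For the converse I argue by contrapositive. Assume $\mathrm{pd}\,F=2$, so $K\neq 0$. With $I=\mathrm{Im}\,f$, the inclusion $I\hookrightarrow M_{0}$ yields $(-,I)\hookrightarrow(-,M_{0})$, and the quotient $G:=(-,I)/\mathrm{Im}((-,f))$ is a subfunctor of $F=(-,M_{0})/\mathrm{Im}((-,f))$. Since $(X,M_{1})\twoheadrightarrow(X,I)$ is surjective for every projective $X$, the functor $G$ vanishes on projectives. Minimality of the presentation forces $M_{1}\twoheadrightarrow I$ not to split, for otherwise $I$ would be a summand of $M_{1}$ and the complementary summand $K$ could be removed, giving a smaller projective presentation; hence $G\neq 0$. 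Because $\mathrm{mod}(\mathrm{mod}\,\Lambda)$ is dualizing Krull-Schmidt, finitely presented functors there and in its opposite admit projective covers; dualizing the nonzero finitely presented $G$ produces a simple top in $\mathrm{mod}((\mathrm{mod}\,\Lambda)^{op})$ whose dual is a simple subfunctor $S_{N}\hookrightarrow G$. Vanishing on projectives forces $N$ to be non-projective, so $S_{N}\hookrightarrow F$ exhibits $F$ as not torsion free.

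The main obstacle is the very last step: extracting a simple subfunctor from $G$, a nonzero finitely presented functor that is a priori not of finite length. The duality for dualizing Krull-Schmidt varieties, combined with the existence of projective covers on both sides, is precisely what reduces finding a simple sub of $G$ to finding a simple top of $DG$, which exists by the projective cover. It is this socle-based detection (rather than a top-based one) that makes $t_{H}$ the correct torsion radical for characterizing $\mathrm{pd}\leq 1$.
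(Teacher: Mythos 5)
Your argument is correct, but note that the paper does not actually supply a proof of this lemma: it defers entirely to [19, Lemma 5.4], so the comparison here is between your self-contained argument and a bare citation. Your reductions are all sound. Identifying $\ker(-,f)$ with the representable $(-,\ker f)$ (left exactness of the Yoneda embedding) gives $\mathrm{pdim}\,F\leq 2$ always and, via minimality of the presentation, the equivalence of $\mathrm{pdim}\,F\leq 1$ with $f$ being a monomorphism, consistent with the paper's identification $\hat{\mathrm{mod}}(\Lambda)=\varPhi(\mathrm{monomaps}(\mathrm{mod}\,\Lambda))$. The forward direction --- representables are torsion free because a simple subfunctor $S_L$ with $L$ nonprojective would yield, by Yoneda, a nonzero $g:L\to N$ annihilating $\mathrm{rad}(-,L)$, which is contradicted by composing with the non-split epimorphism $\pi$ from the almost split sequence ending at $L$ --- is the standard argument and is complete, given that torsion freeness passes to subfunctors. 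For the converse, your subfunctor $G=(-,\mathrm{Im}\,f)/\mathrm{Im}(-,f)$ of $F$ is the right object: it is finitely presented (it is the cokernel of $(-,M_1)\to(-,\mathrm{Im}\,f)$), it vanishes on projectives, and it is nonzero precisely because a splitting of $M_1\twoheadrightarrow \mathrm{Im}\,f$ would make $(-,\ker f)$ a nonzero direct summand of $(-,M_1)$ contained in the kernel of the presentation, contradicting minimality. You are also right to single out the one delicate step, namely extracting a simple subfunctor from $G$, which need not have finite length when $\Lambda$ is representation-infinite; the passage through the duality $D$ and the existence of projective covers over the dualizing Krull--Schmidt variety $\mathrm{mod}\,\Lambda$ (both recalled in Section 1 of the paper) legitimately produces a simple subfunctor $S_N\subseteq G\subseteq F$, and $N$ is forced to be nonprojective because $S_N$ inherits the vanishing of $G$ on projectives while $S_N(N)\neq 0$. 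In short, you have supplied a complete proof where the paper offers only a reference.
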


{\bf ACKNOWLEDGEMENTS.}
The second author thanks CONACYT for giving him financial support
during his graduate studies.

This paper is in final form and no version of it will be submitted
for publication elsewhere.

\end{document}